\def\today{\number\day\space\ifcase\month\or   January\or February\or
   March\or April\or May\or June\or   July\or August\or September\or
   October\or November\or December\fi\   \number\year}
\theoremstyle{definition}
\newtheorem{lma}{Lemma}[section]
\newaliascnt{thmCt}{lma}
\newtheorem{thm}[thmCt]{Theorem}
\newaliascnt{corCt}{lma}
\newtheorem{cor}[corCt]{Corollary}
\newaliascnt{propCt}{lma}
\newtheorem{prop}[propCt]{Proposition}
\newtheorem*{thm*}{Theorem}
\newtheorem*{cor*}{Corollary}
\newtheorem*{prop*}{Proposition}
\newaliascnt{pgrCt}{lma}
\newaliascnt{dfCt}{lma}
\newtheorem{df}[dfCt]{Definition}
\newaliascnt{remCt}{lma}
\newtheorem{rem}[remCt]{Remark}
\newaliascnt{remsCt}{lma}
\newaliascnt{egCt}{lma}
\newaliascnt{egsCt}{lma}
\newtheorem{egs}[egsCt]{Examples}
\newaliascnt{qstCt}{lma}
\newaliascnt{pbmCt}{lma}
\newaliascnt{notaCt}{lma}
\newcommand{\beq}{\begin{equation}}
\newcommand{\eeq}{\end{equation}}
\newcommand{\beqa}{\begin{eqnarray*}}
\newcommand{\eeqa}{\end{eqnarray*}}
\newcommand{\bal}{\begin{align*}}
\newcommand{\eal}{\end{align*}}
\newcommand{\bi}{\begin{itemize}}
\newcommand{\ei}{\end{itemize}}
\newcommand{\be}{\begin{enumerate}}
\newcommand{\ee}{\end{enumerate}}
\newcommand{\ep}{\varepsilon}
\newcommand{\C}{{\mathbb{C}}}
\newcommand{\N}{{\mathbb{N}}}
\newcommand{\Hi}{{\mathcal{H}}}
\newcommand{\K}{{\mathcal{K}}}
\newcommand{\U}{{\mathcal{U}}}
\newcommand{\T}{{\mathbb{T}}}
\newcommand{\D}{{\mathcal{D}}}
\newcommand{\Ot}{{\mathcal{O}_2}}
\newcommand{\OI}{{\mathcal{O}_{\I}}}
\newcommand{\dimRok}{{\mathrm{dim}_\mathrm{Rok}}}
\newcommand{\dimnuc}{{\mathrm{dim}_\mathrm{nuc}}}
\newcommand{\dr}{{\mathrm{dr}}}
\newcommand{\id}{{\mathrm{id}}}
\newcommand{\dist}{{\mathrm{dist}}}
\newcommand{\Aut}{{\mathrm{Aut}}}
\newcommand{\Ad}{{\mathrm{Ad}}}
\newcommand{\Ann}{{\mathrm{Ann}}}
\newcommand{\ca}{$C^*$-algebra}
\newcommand{\uca}{unital $C^*$-algebra}
\newcommand{\Rp}{Rokhlin property}
\newcommand{\I}{\infty}
\title[]{Crossed products by compact group actions with the Rokhlin property}
\author{Eusebio Gardella}
\date{\today}
\thanks{}
\address{Mathematisches Institut, Fachbereich Mathematik und Informatik der
Universit\"at M\"unster, Einsteinstrasse 62, 48149 M\"unster, Germany.}
\email[]{gardella@uni-muenster.de}
\subjclass[2010]{Primary 46L55; Secondary 46L35, 46L80}
\keywords{Rokhlin property, crossed product, fixed point algebra, weak semiprojectivity}
\begin{document}

\begin{abstract}
We present a systematic study of the structure of crossed products and fixed point
algebras by compact group
actions with the Rokhlin property on not necessarily unital $C^*$-algebras.
Our main technical result is the existence of an approximate
homomorphism from the algebra to its subalgebra of fixed points, which is a left inverse
for the canonical inclusion. Upon combining this with results regarding local approximations,
we show that a number of classes characterized by inductive limit decompositions with weakly semiprojective
building blocks, are closed under formation of crossed products by such actions.
Similarly, in the presence of the Rokhlin property, if the algebra has any of the following properties,
then so do the crossed product and the fixed point algebra: being a Kirchberg algebra, being simple and
having tracial rank zero or one, having real rank zero, having
stable rank one, absorbing a strongly self-absorbing $C^*$-algebra, satisfying the Universal Coefficient
Theorem (in the simple, nuclear case), and being weakly semiprojective.
The ideal structure of crossed products and fixed point algebras by Rokhlin actions is also studied.

The methods of this paper unify, under a single conceptual approach, the work of a number of
authors, who used rather different techniques. Our methods yield
new results even in the well-studied case of finite group actions with the Rokhlin property.
\end{abstract}

\maketitle
\tableofcontents

\section{Introduction}

The Rokhlin property first appeared in the late 1970's and early 1980's,
in work of Fack and Mar\'echal \cite{FacMar1}, Kishimoto \cite{Kis_AutUHF}, and Herman and Jones
\cite{HerJon_period} on cyclic group actions on UHF-algebras, and in the work of Herman and Ocneanu
\cite{HerOcn_StabInteg} on integer actions on UHF-algebras.

In \cite{Izu_RpI}, Izumi provided a formal definition of the Rokhlin property for an arbitrary finite group action
on a \uca.
His classification theorems for Rokhlin actions (\cite{Izu_RpI}, \cite{Izu_RpII}) are among the major
results in the study of finite group actions.

In a different direction, Izumi \cite{Izu_RpI}, Hirshberg and Winter
\cite{HirWin_Rp}, Phillips \cite{Phi_tracialFirst}, Osaka and Phillips \cite{OsaPhi_CPRP}, and Pasnicu and Phillips
\cite{PasPhi_PermProp}, explored the structure of crossed products by finite group actions with the Rokhlin property on
unital \ca s, while Santiago \cite{San_RpNonunital} addressed similar questions in the non-unital case.
The questions and problems addressed in each of these works are different, and consequently the approaches
used by the above mentioned authors are substantially distinct in some cases.

In \cite{HirWin_Rp}, Hirshberg and Winter also introduced the \Rp\ for a compact group action on a unital \ca,
and their definition coincides with Izumi's in the case of finite groups. They showed that approximate divisibility
and $\mathcal{D}$-stability, for a strongly self-absorbing $C^*$-algebra $\mathcal{D}$, are preserved under
formation of crossed product by compact group actions with the \Rp. Extending the results of \cite{Phi_tracialFirst},
\cite{OsaPhi_CPRP}, and \cite{PasPhi_PermProp} to the case of arbitrary compact groups requires new insights, since the
main technical tool in all of these works (Theorem~3.2 in \cite{OsaPhi_CPRP}) seems not to have a satisfactory
analog in the compact group case.

In this paper, we extend the definition of Hirshberg-Winter to actions of compact groups on $\sigma$-unital
\ca s, and generalize the results on finite group actions with the \Rp\ of the above mentioned papers
to the case of compact group actions. Our contribution is two-fold. First, most of the results we prove here
were known only in some special cases (mostly for finite or circle group actions; see
\cite{Gar_Kir1} and \cite{Gar_Kir2} for the
circle case), and some of them had not been noticed even in the context of finite groups. Additionally,
we do not require our \ca s to be unital, unlike in \cite{Izu_RpI}, \cite{HirWin_Rp}, \cite{OsaPhi_CPRP}, or
\cite{PasPhi_PermProp}. Finally,
our methods represent a uniform treatment of the study of crossed products by actions with
the Rokhlin property, where the attention is shifted from the crossed product itself, to the algebra of fixed
points.

\vspace{0.3cm}

Our results can be summarized as follows (the list is not exhaustive). We point out that
(14) below was first obtained, with different techniques and for unital $C^*$-algebras,
by Hirshberg and Winter as part (1) of
Corollary~3.4 in \cite{HirWin_Rp}. Also, (10) and (14) were obtained in \cite{Gar_CptRok}.

\begin{thm*}
The following classes of $\sigma$-unital \ca s are closed under formation of crossed products and passage
to fixed point algebras by actions of second-countable compact groups with the \Rp:
\be\item Simple $C^*$-algebras (\autoref{cor:Simple}). More generally, the ideal structure can be completely determined (\autoref{thm:IdealStruct});
\item $C^*$-algebras that are direct limits of certain weakly semiprojective \ca s
(\autoref{thm:RpPreservesWSPAQ}). This includes
UHF-algebras (or matroid algebras), AF-algebras, AI-algebras,
A$\T$-algebras, countable inductive limits of one-dimensional NCCW-complexes, and several other classes
(\autoref{cor:ParticularClasses});
\item Kirchberg algebras (\autoref{cor:Kir});
\item Simple $C^*$-algebras with tracial rank at most one (\autoref{thm:TAF});
\item Simple, separable, nuclear \ca s satisfying the Universal Coefficient Theorem (\autoref{thm:UCT});
\item $C^*$-algebras with nuclear dimension at most $n$, for $n\in\N$ (\autoref{thm:DimNucDR});
\item $C^*$-algebras with decomposition rank at most $n$, for $n\in\N$ (\autoref{thm:DimNucDR});
\item $C^*$-algebras with real rank zero (\autoref{prop:rrz_sro});
\item $C^*$-algebras with stable rank one (\autoref{prop:rrz_sro});
\item $C^*$-algebras with strict comparison of positive elements (Corollary~3.19 in~\cite{Gar_CptRok});
\item $C^*$-algebras whose order on projections is determined by traces (\autoref{prop:OrdPjnTraces});
\item (Not necessarily simple) purely infinite $C^*$-algebras (\autoref{prop:PI});
\item Separable $\mathcal{D}$-absorbing \ca s, for a strongly self-absorbing \ca\ $\mathcal{D}$
(\autoref{thm:SSApreserved});
\item $C^*$-algebras whose $K$-groups are either: trivial, free, torsion-free, torsion, or finitely generated
(Corollary~3.4 in~\cite{Gar_CptRok});
\item Weakly semiprojective \ca s (\autoref{prop:WksjPreserved}).
\ee\end{thm*}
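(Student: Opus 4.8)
The plan is to extract all fifteen conclusions from one mechanism --- the approximate \hm\ $\psi\colon A\to A^\alpha$ that is a left inverse for the inclusion $\iota\colon A^\alpha\hookrightarrow A$, suitably upgraded to a genuine \hm\ into a sequence algebra --- together with the standard way of recovering $A\rtimes_\alpha G$ and $A^\alpha$ from $A$. \emph{Stage 1 (structural dictionary).} For a second-countable compact group $G$ acting on a $\sigma$-unital \ca\ $A$ with the \Rp, I would first observe that the averaging element $p=\int_G u_g\,dg$ is a projection in the multiplier algebra of $A\rtimes_\alpha G$ with $p\,(A\rtimes_\alpha G)\,p\cong A^\alpha$, and that the \Rp\ forces $p$ to be full; hence $A\rtimes_\alpha G$ and $A^\alpha$ are strongly Morita equivalent, thus stably isomorphic since both are $\sigma$-unital. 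Stable rank one, real rank zero, pure infiniteness, $\mathcal D$-absorption, strict comparison, the various $K$-theoretic conditions, nuclear dimension, decomposition rank, and the UCT are all invariant under stable isomorphism and behave well under passage to full corners and matrix amplifications, so for items (3)--(14) it suffices to show that the property travels from $A$ to $A^\alpha$, while the same equivalence reduces (1) to a description of the ideal lattice of $A^\alpha$ in terms of the $\alpha$-invariant ideals of $A$ (\autoref{thm:IdealStruct}).

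\emph{Stage 2 (transfer from $A$ to $A^\alpha$).} This is where $\psi$ does the work. Since $\psi$ is an approximate \hm\ with $\psi\circ\iota$ pointwise close to $\id_{A^\alpha}$, the algebra $A^\alpha$ is an ``approximate retract'' of $A$: for every finite $F\S A^\alpha$ and $\ep>0$ there is a completely positive contractive approximate \hm\ $A\to A^\alpha$ agreeing with the identity on $F$ up to $\ep$, while $A^\alpha\S A$ isometrically. For each property above one argues that it descends along this pair of maps, either because the property is automatically inherited by sub-$C^*$-algebras that approximately absorb large finite sets and are approximately corners (as for $\rrz$, $\sro$, and pure infiniteness), or because $A^\alpha$ is a corner/relative-commutant subalgebra and $\psi$ can be used to split off the relevant $K$-theoretic or absorption structure. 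The local-approximation results invoked in the introduction are exactly what makes these perturbation arguments legitimate in the $\sigma$-unital generality rather than only for unital algebras.

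\emph{Stage 3 (the weakly semiprojective classes, items (2) and (15)).} These genuinely exploit weak semiprojectivity. For (15): given a \hm\ $\varphi\colon A^\alpha\to\prod_k B_k/\bigoplus_k B_k$, I would compose with the sequence-algebra upgrade of $\psi$ to get a \hm\ $A\to\prod_k B_k/\bigoplus_k B_k$, use weak semiprojectivity of $A$ to produce partial lifts $A\to B_k$, restrict these along $\iota$, and use $\psi\circ\iota\approx\id_{A^\alpha}$ together with the stability of weakly semiprojective lifting under pointwise-close \hms\ to turn them into partial lifts of $\varphi$; hence $A^\alpha$ is weakly semiprojective. For (2): writing $A=\dirlim D_i$ with each $D_i$ weakly semiprojective and in the relevant class $\mathcal C$, one approximates a finite $F\S A^\alpha$ inside some $D_i$, pushes $D_i$ into $A^\alpha$ via $\psi$, and perturbs the resulting approximate \hm\ to an honest \hm\ $D_i\to A^\alpha$ using weak semiprojectivity of $D_i$; this realises $A^\alpha$ as a \ca\ locally approximated by images of algebras from $\mathcal C$, and the cited local-approximation theorem (\autoref{thm:RpPreservesWSPAQ}) then identifies $A^\alpha$ as a direct limit of $\mathcal C$-algebras. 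The corresponding statements for $A\rtimes_\alpha G$ follow from those for $A^\alpha$ through the Morita equivalence of Stage 1, using that the classes occurring in (2) and (15) are stable under the full corners that arise.

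\emph{Main obstacle.} The real difficulty is concentrated in Stage 1 and, above all, the construction of $\psi$: producing the approximate left inverse $A\to A^\alpha$ and arranging that it assembles into a genuine \hm\ into a sequence algebra with the correct restriction to $A^\alpha$, for an arbitrary second-countable compact (not merely finite) group and for algebras that need not be unital --- precisely the regime in which the main technical tool of \cite{OsaPhi_CPRP} is unavailable. Once $\psi$ is in hand, Stages 2 and 3 are perturbation and lifting arguments of a by-now familiar kind.
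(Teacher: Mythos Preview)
Your three-stage plan is the paper's own architecture: Stage~1 is \autoref{prop:saturated} (saturation, hence Morita equivalence of $A^\alpha$ and $A\rtimes_\alpha G$), Stage~2 is \autoref{thm:ApproxHom} (the approximate left inverse $\psi$), and Stage~3 is \autoref{prop:ClassesAgree} together with \autoref{prop:WksjPreserved}. Two points deserve comment. First, in your treatment of item~(15) you say that composing $\varphi\colon A^\alpha\to E_\I$ with the ``sequence-algebra upgrade of $\psi$'' produces a \hm\ $A\to E_\I$; it does not --- $\psi$ lands in $(A^\alpha)_\I$, so you only get $A\to (E_\I)_\I$, and a reindexation argument (\autoref{lma:Einftyinfty}) is needed to drop one level of sequence algebra before weak semiprojectivity of $A$ can be invoked; this is the ``sequence algebra extendible'' formalism of \autoref{df:SeqAlgExt}. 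Second, for items~(6) and~(7) the paper does \emph{not} route through $\psi$ at all: the bounds on $\dimnuc$ and $\dr$ come from the identification of the \Rp\ with Rokhlin dimension zero and the general crossed-product estimates of \cite{Gar_RegPropCPRdim, GHS_preparation} (see \autoref{thm:DimNucDR}). Your approximate-retract approach would also succeed for these items via the sequentially-split framework of Barlak--Szab\'o (\cite{BarSza_preparation}, cf.\ \autoref{rem:CommDiagrApprHom}), but that is a different argument from the one the paper gives.
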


Our work yields new results even in the case of finite groups. For example,
in (14) above, we do not require the algebra $A$ to be simple, unlike in
Theorem~3.13 of \cite{Izu_RpI}. In addition, the classes of \ca s considered in
\autoref{thm:RpPreservesWSPAQ} may consist of simple \ca s, unlike in Theorem~3.5 in
\cite{OsaPhi_CPRP} (we also do not impose any conditions regarding corners of our algebras).
Additionally, in
\autoref{prop:WksjPreserved}, we show that the inclusion $A^\alpha\to A$ is sequence algebra extendible
(\autoref{df:SeqAlgExt}) whenever $\alpha$ has the \Rp, and hence weak semiprojectivity passes from $A$ to $A^\alpha$. Our conclusion
seems not to be obtainable with the methods developed in \cite{OsaPhi_CPRP} and related works,
since it is not in general true that a corner of a weakly semiprojective \ca\ is weakly semiprojective.

Given that our results all follow as easy consequences of our main technical observation,
\autoref{thm:ApproxHom}, which allows us to deal with the non-unital case as well,
we believe that this paper unifies the work of a number of authors, who used
rather different methods, under a single systematic and conceptual approach.

In this paper, we take $\N=\{1,2,\ldots\}$.

\vspace{0.3cm}

\textbf{Acknowledgements.} The author is grateful to Chris Phillips for a number of helpful
conversations regarding averaging processes. He also thanks Hannes Thiel for conversations
on the Cuntz semigroup and local approximations, and Juan Pablo Lago for
his support and encouragement. Finally, he thanks the referee for a number of comments and
suggestions that improved the quality of this work, and in particular for suggesting a simpler
proof of \autoref{thm:ApproxHom}.

\section{An averaging process}

We begin introducing some useful notation and terminology.

\subsection{Central sequence algebras and Rokhlin property}

Given a \ca\ $A$, let $\ell^\I(\N,A)$ denote the set of all bounded sequences in $A$ with the supremum norm
and pointwise operations. Then $\ell^\I(\N,A)$ is a \ca, and it is unital if $A$ is $\sigma$-unital, since
any countable approximate unit for $A$ determines a unit for $\ell^\I(\N,A)$.
Set
$$c_0(\N,A)=\{(a_n)_{n\in\N}\in\ell^\I(\N,A)\colon \lim_{n\to\I}\|a_n\|=0\}.$$
Then $c_0(\N,A)$ is an ideal in $\ell^\I(\N,A)$, and we denote the quotient
$\ell^\I(\N,A)/c_0(\N,A)$ by $A_\I$.
We write $\eta_A\colon \ell^\I(\N,A)\to A_\I$ for the quotient map. We identify $A$ with the subalgebra of
$\ell^\I(\N,A)$ consisting of the constant sequences, and with a subalgebra of $A_\I$ by
taking its image under $\eta_A$. If $D$ is any subalgebra of $A$, then $A_\I\cap D'$ denotes the relative
commutant of $D$ inside of $A_\I$.

\begin{df}\label{df:CtralSeq} For a subalgebra $D\subseteq A$, write $\Ann(D,A_\I)$ for the annihilator of $D$ in $A_\I$, which is an ideal in $A_\I\cap D'$.
Following Kirchberg (\cite{Kir_CentralSeq}), we set
\[F(D,A)=A_\I\cap D'/\Ann(D,A_\I),\]
and write $\kappa_{D,A}\colon A_\I\cap D'\to F(D,A)$ for the quotient map. When $D=A$, we abbreviate $F(A,A)$ and $\kappa_{D,A}$
to $F(A)$ and $\kappa_A$.

If $\alpha\colon G\to\Aut(A)$ is an action of $G$ on $A$, and $D$ is an $\alpha$-invariant subalgebra of $A$, then there
are (not necessarily continuous) actions
of $G$ on $\ell^\I(\N,A)$, on $A_\I$, on $A_\I\cap D'$ and on $F(D,A)$, respectively denoted, with a slight
abuse of notation, by $\alpha^\I$, $\alpha_\I$, $\alpha_\I$ and $F(\alpha)$. Following Kishimoto (\cite{Kis_flows}),
we set
$$\ell^\I_\alpha(\N,A)=\{a\in \ell^\I(\N,A)\colon g\mapsto \alpha^\I_g(a) \mbox{ is continuous}\}.$$
We also set $A_{\I,\alpha}=\eta_A(\ell^\I_\alpha(\N,A))$ and $F_{\alpha}(A)=\kappa_{D,A}(A_{\I,\alpha}\cap D')$.\end{df}

By construction, $A_{\I,\alpha}$ and $F_{\alpha}(D,A)$ are invariant under $\alpha_\I$ and $F(\alpha)$, so the restrictions
of $\alpha_\I$ and $F(\alpha)$ to $A_{\I,\alpha}$ and $F_{\alpha}(D,A)$, which we also denote by $\alpha_\I$ and $F(\alpha)$, are continuous.

If $G$ is a locally compact group, we denote by
$\verb'Lt'\colon G\to\Aut(C_0(G))$ the action induced by left translation of $G$ on itself.

The following generalizes Definition~3.2 of \cite{HirWin_Rp} to the $\sigma$-unital setting.
(See Definition~3.2 in~\cite{Naw_RpNonunital} for the case of finite groups.) It should
also be compared with Definition~1.6 in~\cite{Sza_cRp}.

\begin{df}\label{df:Rp}
Let $A$ be a $\sigma$-unital \ca, let $G$ be a second-countable compact group,
and let $\alpha \colon G \to \Aut(A)$ be a continuous action. We say that $\alpha$
has the \emph{Rokhlin property} if for every separable $\alpha$-invariant subalgebra
$D\subseteq A$, there is an equivariant unital homomorphism
\[\varphi\colon (C(G),\texttt{Lt})\to (F_{\alpha}(D,A),F(\alpha)).\]\end{df}

A number of features of the Rokhlin property are studied in \cite{Gar_CptRok}. Here, we
shall focus on the crossed products and fixed point algebras, with emphasis on their
structure and classifiability.

We will repeatedly use the following fact, which is probably standard. Its proof can be found,
for example, in \cite{GHS_preparation}. For compact $G$, we identify $C(G,A)$ and
$C(G)\otimes A$ in the usual way.

\begin{prop}\label{prop:CGAequivisom}
Let $G$ be a compact group, let $A$ be a \ca, and let $\alpha\colon G\to\Aut(A)$ be an action.
Define a homomorphism $\sigma\colon C(G,A)\to C(G,A)$ by
$\sigma(a)(g)=\alpha_g(a(g))$ for $a\in C(G,A)$ and $g\in G$. Then
\[\sigma\colon (C(G,A),\texttt{Lt}\otimes\id_A)\to (C(G,A),\texttt{Lt}\otimes\alpha)\]
is an equivariant isomorphism.
\end{prop}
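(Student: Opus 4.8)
The plan is to write down an explicit inverse for $\sigma$ and to verify both that $\sigma$ (and this inverse) are $*$-homomorphisms and that $\sigma$ is equivariant, by direct pointwise computations, working throughout with the identification of $C(G)\otimes A$ with the $C^*$-algebra $C(G,A)$ of continuous $A$-valued functions on $G$.

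First I would check that $\sigma$ actually maps into $C(G,A)$, i.e.\ that $g\mapsto\alpha_g(a(g))$ is continuous for each $a\in C(G,A)$. This is the one point where a little care is needed: it uses joint continuity of the map $G\times A\to A$, $(g,b)\mapsto\alpha_g(b)$, which follows from strong continuity of $\alpha$ via the estimate $\|\alpha_{g'}(b')-\alpha_g(b)\|\le\|b'-b\|+\|\alpha_{g'}(b)-\alpha_g(b)\|$; composing with $g\mapsto(g,a(g))$ then gives continuity of $\sigma(a)$. Granting this, that $\sigma$ is a $*$-homomorphism is immediate and entirely pointwise — additivity, multiplicativity and $*$-preservation each reduce at every $g\in G$ to the corresponding property of the $*$-automorphism $\alpha_g$, and linearity is clear.

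Next I would produce the inverse: set $\tau(a)(g)=\alpha_{g^{-1}}(a(g))$, which lies in $C(G,A)$ and is a $*$-homomorphism by exactly the same arguments. Then for $a\in C(G,A)$ and $g\in G$ one has $\sigma(\tau(a))(g)=\alpha_g(\alpha_{g^{-1}}(a(g)))=a(g)$ and, symmetrically, $\tau(\sigma(a))(g)=a(g)$, so $\sigma\tau=\tau\sigma=\id_{C(G,A)}$ and $\sigma$ is an isomorphism of $C^*$-algebras.

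Finally, for equivariance I would unwind the two actions on $C(G,A)$: with the standard convention $(\texttt{Lt}_h f)(g)=f(h^{-1}g)$ one has $((\texttt{Lt}\otimes\id_A)_h a)(g)=a(h^{-1}g)$ and $((\texttt{Lt}\otimes\alpha)_h a)(g)=\alpha_h(a(h^{-1}g))$. Then for $h,g\in G$,
\[\sigma\big((\texttt{Lt}\otimes\id_A)_h a\big)(g)=\alpha_g\big(a(h^{-1}g)\big)=\alpha_h\alpha_{h^{-1}g}\big(a(h^{-1}g)\big)=\big((\texttt{Lt}\otimes\alpha)_h(\sigma(a))\big)(g),\]
where the middle equality is the identity $\alpha_g=\alpha_h\circ\alpha_{h^{-1}g}$; hence $\sigma$ intertwines the two actions. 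I do not anticipate any genuine obstacle: the only things one can get wrong are the bookkeeping of the $\texttt{Lt}$-convention and of the formula for $\texttt{Lt}\otimes\alpha$ on $A$-valued functions (which must be arranged so the computation collapses to the cocycle identity in the correct order), and remembering to invoke joint continuity of the action so that $\sigma$ and $\tau$ genuinely take values in $C(G,A)$. Compactness of $G$ is used only to know that $C(G,A)=C(G)\otimes A$ is a $C^*$-algebra.
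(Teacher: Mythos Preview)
Your proof is correct. The paper itself does not give a proof of this proposition: it simply remarks that the fact is standard and refers to \cite{GHS_preparation} for a proof. Your direct verification---writing down the explicit inverse $\tau(a)(g)=\alpha_{g^{-1}}(a(g))$, checking continuity via joint continuity of the action, and reducing equivariance to the identity $\alpha_g=\alpha_h\circ\alpha_{h^{-1}g}$---is exactly the expected argument and fills in what the paper omits.
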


We need an easy lifting result. We thank Luis Santiago for pointing it out to us.

\begin{lma}\label{lma:LiftFromF}
Let $G$ be a locally compact group, let $C$ and $A$ be \ca s, and let $\gamma\colon G\to\Aut(C)$
and $\alpha\colon G\to \Aut(A)$ be actions, and let $D\subseteq A$ be an invariant subalgebra.
Give $C\otimes_{\mathrm{max}}A$ the diagonal $G$-action.
Suppose that there exists a unital equivariant homomorphism $\varphi\colon C\to F_\alpha(D,A)$, and
choose any function $\theta\colon C\to A_{\I,\alpha}\cap D'$ satisfying $\kappa_A\circ\theta=\varphi$.
Then there exists an equivariant homomorphism
\[\psi\colon C\otimes_{\mathrm{max}} D\to A_{\I,\alpha}\]
determined by $\psi(c\otimes d)=\theta(c)d$ for all $c\in C$ and all $a\in A$. Moreover, $\psi$ does
not depend on $\theta$.
\end{lma}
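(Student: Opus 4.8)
The plan is to build $\psi$ by first defining it on the algebraic tensor product $C\odot D$ via the prescribed formula, then checking it is a well-defined $*$-homomorphism into $A_{\I,\alpha}$, and finally extending to $C\otimes_{\mathrm{max}}D$ by the universal property of the maximal tensor product and verifying equivariance.

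First I would fix a lift $\theta\colon C\to A_{\I,\alpha}\cap D'$ of $\varphi$ (set-theoretic, no continuity or linearity assumed) and define a bilinear map $C\times D\to A_{\I,\alpha}$ by $(c,d)\mapsto\theta(c)d$; note $\theta(c)\in A_{\I,\alpha}$ and $d\in D\subseteq A_{\I,\alpha}$, and $A_{\I,\alpha}$ is a subalgebra, so the product lands there. This induces a linear map $\psi_0\colon C\odot D\to A_{\I,\alpha}$. The key point is that $\psi_0$ is multiplicative: for $c_1,c_2\in C$ and $d_1,d_2\in D$,
\[
\psi_0(c_1\otimes d_1)\psi_0(c_2\otimes d_2)=\theta(c_1)d_1\theta(c_2)d_2=\theta(c_1)\theta(c_2)d_1d_2,
\]
using $d_1\in D$ and $\theta(c_2)\in A_\I\cap D'$ to commute them; one then needs $\theta(c_1)\theta(c_2)d_1d_2=\theta(c_1c_2)d_1d_2$, i.e. $\bigl(\theta(c_1c_2)-\theta(c_1)\theta(c_2)\bigr)d_1d_2=0$. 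Since $\kappa_A\circ\theta=\varphi$ is multiplicative, $\theta(c_1c_2)-\theta(c_1)\theta(c_2)\in\ker\kappa_A=\Ann(D,A_\I)$, so it annihilates $D$, hence kills $d_1d_2$. The $*$-structure is handled the same way: $\psi_0(c\otimes d)^*=d^*\theta(c)^*=\theta(c)^*d^*$, and $\theta(c)^*-\theta(c^*)\in\Ann(D,A_\I)$ again by multiplicativity of $\varphi$ (or self-adjointness up to the annihilator), so $\psi_0$ is a $*$-homomorphism on $C\odot D$. This also shows independence of the choice of $\theta$: two lifts differ by a function into $\Ann(D,A_\I)$, which is killed upon right-multiplication by elements of $D$.

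Next, to pass to the completion I would invoke the universal property of $\otimes_{\mathrm{max}}$: a $*$-homomorphism out of $C\odot D$ extends continuously to $C\otimes_{\mathrm{max}}D$ provided it is bounded on the algebraic tensor product with respect to the maximal norm, which is automatic for $*$-homomorphisms into a \ca\ (the maximal norm dominates any C*-seminorm coming from a pair of commuting representations, and here $c\mapsto\theta(c)$ followed by $\kappa$ and $d\mapsto d$ give commuting representations on a suitable Hilbert module; more directly, any $*$-homomorphism from an algebraic tensor product of \cas\ into a \ca\ is contractive for $\|\cdot\|_{\mathrm{max}}$). This yields the desired $\psi\colon C\otimes_{\mathrm{max}}D\to A_{\I,\alpha}$ with $\psi(c\otimes d)=\theta(c)d$.

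Finally, for equivariance, I would check $\psi\circ(\gamma_g\otimes\alpha_g)=(\alpha_\I)_g\circ\psi$ on elementary tensors, where it suffices since both sides are $*$-homomorphisms agreeing on a dense set. We compute $(\alpha_\I)_g(\psi(c\otimes d))=(\alpha_\I)_g(\theta(c)d)=(\alpha_\I)_g(\theta(c))\,\alpha_g(d)$. Now $\kappa_A\bigl((\alpha_\I)_g(\theta(c))\bigr)=F(\alpha)_g(\varphi(c))=\varphi(\gamma_g(c))$ since $\varphi$ is equivariant, so $(\alpha_\I)_g(\theta(c))$ is a lift of $\varphi(\gamma_g(c))$; hence it differs from $\theta(\gamma_g(c))$ by an element of $\Ann(D,A_\I)$, which vanishes after multiplying by $\alpha_g(d)\in D$ (as $D$ is invariant). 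Therefore $(\alpha_\I)_g(\psi(c\otimes d))=\theta(\gamma_g(c))\,\alpha_g(d)=\psi(\gamma_g(c)\otimes\alpha_g(d))=\psi\bigl((\gamma_g\otimes\alpha_g)(c\otimes d)\bigr)$, as required. The only mild subtlety worth flagging is that one should confirm $(\alpha_\I)_g$ genuinely preserves $A_{\I,\alpha}$ and restricts to an automorphism there, but this is exactly the content of the remark following \autoref{df:CtralSeq}, so there is no real obstacle; the whole argument is a routine "lift-and-check-modulo-the-annihilator" computation once the multiplicativity of $\psi_0$ on $C\odot D$ is in hand.
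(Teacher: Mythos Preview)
Your proof is correct and follows essentially the same approach as the paper's own proof, which also verifies multiplicativity via the observation that $\theta(c_1c_2)-\theta(c_1)\theta(c_2)\in\Ann(D,A_\I)$ and then notes independence of the lift; you simply fill in more of the routine details (the $*$-structure, extension to $\otimes_{\mathrm{max}}$, and equivariance) than the paper does. One small point: you assert that $(c,d)\mapsto\theta(c)d$ is bilinear right after noting that $\theta$ need not be linear, and the missing justification is exactly the annihilator argument you already use elsewhere (since $\theta(\lambda c_1+c_2)-\lambda\theta(c_1)-\theta(c_2)\in\Ann(D,A_\I)$, it is killed by $d$), so this is not a genuine gap.
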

\begin{proof}
We check that $\psi$ is indeed a homomorphism.
Let $c_1,c_2\in C$ and $d_1,d_2\in A$ be given. Using that $\theta(c_1c_1)x=\theta(c_1)\theta(c_2)x$ for any $x\in D$ at
the second step, and that $\theta(C)$ commutes with $D$ at the third step, we get
\begin{align*} \psi(c_1c_2\otimes d_1d_1)&=\theta(c_1c_2)d_1d_1=\theta(c_1)\theta(c_2)d_1d_1=\theta(c_1)d_1\theta(c_2)d_2\\
 &=\psi(c_1\otimes d_1)\psi(c_2\otimes d_2),
\end{align*}
as desired. Finally, if $\widetilde{\theta}$ is another lift of $\varphi$, then clearly
$\widetilde{\theta}(c)d=\theta(c)d$ for
all $c\in C$ and all $a\in D$, which shows that $\psi$ does not depend on the lift of $\varphi$. \end{proof}

\subsection{First results on crossed product and the averaging process}
If a compact group $G$ acts on
a \ca\ $A$, then $A^G$ is naturally a corner in $A\rtimes G$ (see the Theorem in \cite{Ros_corner}),
even though $A$ is itself not in
general a subalgebra of $A\rtimes G$. (When $G$ is discrete, there
is a different way of regarding $A^G$ as a subalgebra of the crossed product, since $A$ always
sits inside $A\rtimes G$. When $G$ is finite, these two inclusions never agree when $G$ is not
trivial, and we will exclusively deal with the corner inclusion considered by Rosenberg.)

Using this corner inclusion, one can many times obtain
information about the fixed point algebra through the crossed product. However, since this
corner is not in general full, Rosenberg's theorem is not always useful if one is interested
in transferring structure from $A^G$ to $A\rtimes G$.
Saturation for compact group actions is the basic notion that allows one to do this, up to
Morita equivalence. The definition, which is essentially due to Rieffel, is as in Definition~7.1.4
in \cite{Phi_Book}. What we reproduce below is the equivalent formulation given in
Lemma~7.1.9 in \cite{Phi_Book}.
We point out that saturation is equivalent to the corner $A^G\subseteq A\rtimes G$ being full.

\begin{df}\label{df:saturation} (Definition~7.1.4 in \cite{Phi_Book}.)
Let $G$ be a compact group, let $A$ be a \ca, and let $\alpha\colon G\to\Aut(A)$ be an action.
We say that $\alpha$ is \emph{saturated}, if the set
\[\left\{f_{a,b}\colon G\to A; f_{a,b}(g)=a\alpha_g(b) \mbox{ for all } g\in G, \mbox{ with } a,b\in A\right\}\subseteq L^1(G,A,\alpha)\]
spans a dense subspace of $A\rtimes_\alpha G$.\end{df}

It is an easy exercise to check that if a compact group $G$ acts freely on a compact Hausdorff space $X$,
then the induced action on $C(X)$ is saturated. For this, it suffices to prove that the set
\[\left\{f_{a,b}\in C(G\times X)\colon
\begin{aligned}
& \ \ f_{a,b}(g,x)=a(x)b(g\cdot x) \mbox{ for all }\\
&  (g,x)\in G\times X, \mbox{ with } a,b\in C(X)
\end{aligned}
\right\}\]
spans a dense subset of $C(G\times X)$. This linear span is closed under multiplication and contains the constant
functions regardless of whether the action of $G$ is free or not, and it is easy to see that it separates the points
of $G\times X$ if and only if it is free. The claim then follows from the Stone-Weierstrass theorem. See Theorem~7.2.6
in~\cite{Phi_Book} for a more general result involving $C(X)$-algebras.

\begin{lma}\label{lma:C(G,D)Sat}
Let $\beta\colon G\to\Aut(C)$ be a saturated action of a compact group $G$ on
a nuclear \ca\ $C$, and let
$\id_D\colon G\to \Aut(D)$ denote the trivial action. Then the diagonal action
\[\gamma=\beta\otimes\id_D\colon G\to\Aut(C\otimes D)\]
is also saturated.
\end{lma}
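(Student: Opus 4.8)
The plan is to reduce the statement to the assumed saturation of $\beta$ by means of the standard identification of $(C\otimes D)\rtimes_\gamma G$ with $(C\rtimes_\beta G)\otimes D$. Since $C$ is nuclear, so is $C\rtimes_\beta G$ (recall $G$ is compact, hence amenable), and in particular all tensor products appearing below are unambiguous; this is precisely the point at which nuclearity of $C$ is used. First I would recall the standard isomorphism
\[\Theta\colon (C\rtimes_\beta G)\otimes D\ \xrightarrow{\ \cong\ }\ (C\otimes D)\rtimes_\gamma G,\]
which on the dense $*$-subalgebra of $L^1$-functions sends $f\otimes d$, for $f\in L^1(G,C,\beta)$ and $d\in D$, to the function $g\mapsto f(g)\otimes d$ in $L^1(G,C\otimes D,\gamma)$, compatibly with convolution products and involutions. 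Writing, as in \autoref{df:saturation}, $f^\beta_{a,b}(g)=a\beta_g(b)$ for $a,b\in C$ and $f_{x,y}(g)=x\gamma_g(y)$ for $x,y\in C\otimes D$, and using that $(a\otimes d_1)\gamma_g(b\otimes d_2)=a\beta_g(b)\otimes d_1d_2$, one obtains
\[\Theta\bigl(f^\beta_{a,b}\otimes d_1d_2\bigr)=f_{a\otimes d_1,\, b\otimes d_2}\]
for all $a,b\in C$ and $d_1,d_2\in D$.

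Next I would transport the density statement. By hypothesis $\beta$ is saturated, which says exactly that $\{f^\beta_{a,b}\colon a,b\in C\}$ spans a dense subspace of $C\rtimes_\beta G$; on the other hand $\{d_1d_2\colon d_1,d_2\in D\}$ has dense linear span in $D$, since $D$ has an approximate unit. For \cas\ (or Banach spaces) $X$ and $Y$, if $S\subseteq X$ and $T\subseteq Y$ have dense linear span, then $\{s\otimes t\colon s\in S,\ t\in T\}$ has dense linear span in $X\otimes Y$ — a routine consequence of boundedness of the maps $x\mapsto x\otimes t$ and $y\mapsto x\otimes y$ together with density of the algebraic tensor product. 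Applying this with $X=C\rtimes_\beta G$, $Y=D$, it follows that $\{f^\beta_{a,b}\otimes d_1d_2\}$ spans a dense subspace of $(C\rtimes_\beta G)\otimes D$. Since $\Theta$ is an isometric isomorphism, the identity of the first paragraph shows that $\{f_{a\otimes d_1,\,b\otimes d_2}\colon a,b\in C,\ d_1,d_2\in D\}$ spans a dense subspace of $(C\otimes D)\rtimes_\gamma G$; a fortiori so does the larger set $\{f_{x,y}\colon x,y\in C\otimes D\}$, which is precisely the assertion that $\gamma$ is saturated.

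The only step requiring genuine care is the very first one: constructing the isomorphism $\Theta$ and checking that it behaves as claimed on the functions $f^\beta_{a,b}\otimes d_1d_2$. Everything else is soft. Here nuclearity of $C$ ensures not only that $C\otimes D$, the diagonal action, and its crossed product are unambiguously defined, but also that the canonical surjection $(C\rtimes_\beta G)\otimes D\to (C\otimes D)\rtimes_\gamma G$ is in fact an isomorphism. The verification of the compatibility of $\Theta$ with the convolution products is a direct computation on the dense $*$-algebra $C(G,C\otimes D)$ and is entirely standard in the crossed-product literature, so in the write-up I would simply cite it rather than reproduce it.
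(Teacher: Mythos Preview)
Your argument is correct and rests on the same key identification the paper uses, namely $(C\otimes D)\rtimes_\gamma G\cong (C\rtimes_\beta G)\otimes D$. The only real difference is which equivalent formulation of saturation you transport through this isomorphism. You work directly with \autoref{df:saturation}, checking that $\Theta$ carries $f^\beta_{a,b}\otimes d_1d_2$ to $f_{a\otimes d_1,\,b\otimes d_2}$ and then pushing the density of spans through. The paper instead invokes the full-corner characterization (Lemma~7.1.9 in \cite{Phi_Book}): saturation of $\beta$ means $\iota_C(C^\beta)$ generates $C\rtimes_\beta G$ as a hereditary subalgebra, and under the additional identification $(C\otimes D)^\gamma\cong C^\beta\otimes D$ the corner inclusion for $\gamma$ becomes $\iota_C\otimes\id_D$, so fullness is immediate. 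Your route is arguably more self-contained given how saturation is stated in the paper, while the paper's route buys a slightly cleaner one-line conclusion once the hereditary reformulation is granted; neither is deeper than the other.
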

\begin{proof}
Then there are canonical identifications
\[(C\otimes D)^\gamma\cong C^\beta\otimes D \ \mbox{ and }
\ (C\otimes D)\rtimes_\gamma G\cong (C\rtimes_\beta G)\otimes D.\]
Denote by $\iota_C\colon C^\beta\to C\rtimes_\beta G$ the canonical inclusion (see comments above \autoref{df:saturation}).
Observe that the saturation of $\beta$ is equivalent to the hereditary subalgebra generated by
$\iota_C(C^\beta)$ being all of $C\rtimes_\beta G$ (see Lemma~7.1.9 in \cite{Phi_Book}).
Under the above identifications, the inclusion
\[(C\otimes D)^\gamma\hookrightarrow (C\otimes D)\rtimes_\gamma G\]
corresponds to the map
\[\iota_C\otimes \id_D\colon C^\beta\otimes D\to (C\rtimes_\beta G)\otimes D.\]
Hence the image of $(C\otimes D)^\gamma$ generates all of $(C\otimes D)\rtimes_\gamma G$
as a hereditary subalgebra. We conclude that $\gamma$ is saturated.
\end{proof}

The following result will be used repeatedly throughout this paper.

\begin{prop}\label{prop:saturated}
Let $G$ be a second-countable compact group, let $A$ be a $\sigma$-unital \ca,
and let $\alpha\colon G\to\Aut(A)$ be an action with the \Rp. Then $\alpha$ is saturated.

In particular, the fixed point algebra and the crossed product by a compact group action with the
Rokhlin property are Morita equivalent, and thus stably isomorphic whenever the original algebra
is separable. \end{prop}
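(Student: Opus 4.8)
The plan is to prove the first assertion; the second is then immediate, since by the remarks preceding \autoref{df:saturation} saturation means exactly that the corner $A^\alpha\subseteq A\rtimes_\alpha G$ of \cite{Ros_corner} is full, hence $A^\alpha$ and $A\rtimes_\alpha G$ are Morita equivalent, and then stably isomorphic for separable $A$ by Brown's stable isomorphism theorem. So it suffices to show that $A^\alpha$ generates $A\rtimes_\alpha G$ as a closed ideal. Since $G$ is second countable, every separable subset of $A$ lies in a separable $\alpha$-invariant subalgebra, and since crossed products commute with the resulting direct limit we have $A\rtimes_\alpha G=\overline{\bigcup_D(D\rtimes_\alpha G)}$ over separable $\alpha$-invariant $D\subseteq A$. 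Hence it is enough to prove: for each such $D$, the subalgebra $D\rtimes_\alpha G$ is contained in the closed ideal of $A\rtimes_\alpha G$ generated by $A^\alpha$.

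First I would fix $D$ and, by the Rokhlin property, choose an equivariant unital \hm\ $\varphi\colon(C(G),\texttt{Lt})\to(F_\alpha(D,A),F(\alpha))$. As $C(G)$ is nuclear, \autoref{lma:LiftFromF} (with $C=C(G)$, $\gamma=\texttt{Lt}$) yields an equivariant \hm
\[\psi\colon\bigl(C(G)\otimes D,\ \texttt{Lt}\otimes\alpha|_D\bigr)\longrightarrow(A_{\I,\alpha},\alpha_\I),\qquad\psi(c\otimes d)=\theta(c)d,\]
for any lift $\theta$ of $\varphi$. Because $\varphi$ is unital, $\theta(1)-1$ annihilates $D$, so $\theta(1)d=d$ for $d\in D$, and therefore $\psi$ restricts on the invariant subalgebra $1\otimes D$ (on which the action is a copy of $\alpha|_D$) to the inclusion $D\hookrightarrow A\subseteq A_{\I,\alpha}$. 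Passing to crossed products, the \hm\ $\psi\rtimes G$ restricts on $(1\otimes D)\rtimes G=D\rtimes_\alpha G$ to the canonical inclusion $D\rtimes_\alpha G\hookrightarrow A_{\I,\alpha}\rtimes_{\alpha_\I}G$.

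Next I would locate a full corner in the source. The left translation action of $G$ on itself is free, so $\texttt{Lt}$ on $C(G)$ is saturated; as $C(G)$ is nuclear, \autoref{lma:C(G,D)Sat} shows that $\texttt{Lt}\otimes\id_D$ is saturated on $C(G)\otimes D$, and by \autoref{prop:CGAequivisom} this action is equivariantly isomorphic to $\texttt{Lt}\otimes\alpha|_D$, which is thus saturated as well. Consequently the fixed point algebra $Q:=(C(G)\otimes D)^{\texttt{Lt}\otimes\alpha|_D}$ is full in $(C(G)\otimes D)\rtimes_{\texttt{Lt}\otimes\alpha|_D}G$, so in particular $D\rtimes_\alpha G$ lies in the closed ideal generated by $Q$. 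Applying the \hm\ $\psi\rtimes G$ and using that $\psi$, being equivariant, carries $Q$ into $A_{\I,\alpha}^{\alpha_\I}$, I obtain that $D\rtimes_\alpha G$ is contained in the closed ideal of $A_{\I,\alpha}\rtimes_{\alpha_\I}G$ generated by the image of $A_{\I,\alpha}^{\alpha_\I}$, equivalently by the projection $e_\I\in M(A_{\I,\alpha}\rtimes_{\alpha_\I}G)$ integrating the canonical unitaries.

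The last step is to descend to $A\rtimes_\alpha G$. The coordinatewise covariant representation induces a \hm\ $\ell^\I_\alpha(\N,A)\rtimes G\to\ell^\I(\N,A\rtimes_\alpha G)$ which sends $c_0(\N,A)\rtimes G$ into $c_0(\N,A\rtimes_\alpha G)$ (a norm estimate), hence a \hm\ $A_{\I,\alpha}\rtimes_{\alpha_\I}G\to(A\rtimes_\alpha G)_\I$ that is the identity on $A\rtimes_\alpha G$ (viewed as constant sequences on both sides) and sends $e_\I$ to the class of the constant sequence at the Rosenberg projection $e$ of $A\rtimes_\alpha G$. Lifting the ideal membership of $D\rtimes_\alpha G$ coordinatewise and using $\limsup_n\|x_n\|=\|[(x_n)]\|$ shows that each element of $D\rtimes_\alpha G$ lies in the closed ideal of $A\rtimes_\alpha G$ generated by $e$, i.e.\ by $A^\alpha$, completing the argument. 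I expect this descent---the comparison of $A_{\I,\alpha}\rtimes_{\alpha_\I}G$ with a sequence algebra built from $A\rtimes_\alpha G$---to be the main technical obstacle; the transfer of fullness along $\psi\rtimes G$ is routine, \hms\ sending ideals into ideals.
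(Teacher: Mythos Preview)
Your proposal is correct and follows essentially the same route as the paper. Both arguments hinge on the equivariant map $\psi\colon C(G)\otimes D\to A_{\I,\alpha}$ from \autoref{lma:LiftFromF}, the saturation of $\texttt{Lt}\otimes\alpha|_D$ (via \autoref{prop:CGAequivisom} and \autoref{lma:C(G,D)Sat}), and the canonical embedding $A_{\I,\alpha}\rtimes_{\alpha_\I}G\hookrightarrow(A\rtimes_\alpha G)_\I$, followed by a coordinatewise descent to $A\rtimes_\alpha G$. The only difference is in packaging: the paper verifies saturation by approximating $x\chi_E$ (for $x\in A$, $E\subseteq G$ Borel) by finite sums of the functions $f_{a,b}$ from \autoref{df:saturation}, transferring an explicit approximation in $C(G,D)\rtimes_{\gamma_D}G$ through $\widehat{\psi}$; you instead use the equivalent fullness-of-the-corner formulation and push ideal membership through $\psi\rtimes G$. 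Your version is arguably cleaner conceptually, while the paper's explicit $f_{a,b}$ computation makes the final lift from $(A\rtimes_\alpha G)_\I$ to $A\rtimes_\alpha G$ slightly more transparent (no need to track the multiplier projection $e_\I$ through the sequence-algebra map---one simply reads off representing sequences for $f_{\psi(a_j),\psi(b_j)}$). Either way the substance is the same.
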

\begin{proof}
We begin by proving the statement for finite $G$ and unital, separable $A$, because we believe the reader will gain better
intuition from this particular case. Indeed, finiteness of $G$ allows one to construct the approximations
explicitly.

Suppose that $G$ is finite and $A$ is unital and separable. Fix $g\in G$, and denote by $u_g$ the canonical unitary
in the crossed product $A\rtimes_\alpha G$ implementing $\alpha_g$.
We claim that it is enough to show that $u_g$
is in the closed linear span of the functions $f_{a,b}$ from \autoref{df:saturation}. Indeed, if this
is the case, and if $x\in A$, then $xu_g$ also belongs to the closed linear span, and
elements of this form span $A\rtimes_\alpha G$.

For $n\in\N$, find projections $e_g^{(n)}\in A$, for $g\in G$, such that
\be
\item $\left\|\alpha_g(e^{(n)}_h)-e_{gh}^{(n)}\right\|<\frac{1}{n}$ for all $g,h\in G$; and
\item $\sum\limits_{g\in G}e^{(n)}_g=1$.
\ee

For $a,b\in A$, the function $f_{a,b}$ corresponds to the product $a\left(\sum\limits_{h\in G}\alpha_h(b)u_h\right)$.
Thus, for $n\in\N$ and $k\in G$, we have
\[f_{e^{(n)}_{gk},e^{(n)}_k}= e^{(n)}_{gk} \left(\sum_{h\in G}\alpha_h(e^{(n)}_k)u_h\right).\]
We use pairwise orthogonality of the projections $e_g^{(n)}$, for $g\in G$, at the third step, to get
\begin{align*}
\left\|f_{e^{(n)}_{gk},e^{(n)}_k}-e^{(n)}_{gk}u_g\right\|&
= \left\|e^{(n)}_{gk}\left(\sum_{h\in G}e^{(n)}_{gk}\alpha_h(e^{(n)}_k)u_h\right)-e^{(n)}_{gk}u_g\right\|\\
&\leq \left\|e^{(n)}_{gk}\alpha_g(e^{(n)}_k)u_h - e^{(n)}_{gk}u_h\right\|+
\sum_{h\in G, h\neq g}\left\|e^{(n)}_{gk}\alpha_h(e^{(n)}_k)u_h\right\|\\
&< \left\|\alpha_g(e^{(n)}_k)-e^{(n)}_{gk} \right\|
+ \sum_{h\in G, h\neq g}\left\|\alpha_h(e^{(n)}_k)-e^{(n)}_{hk} \right\|\\
&< \frac{1}{n}+(|G|-1)\frac{1}{n}=\frac{|G|}{n}.
\end{align*}
It follows from condition (2) above that
\[\limsup_{n\to\I}\left\|\sum_{k\in G} f_{e^{(n)}_{gk},e^{(n)}_k}- u_g\right\| \leq \limsup_{n\to\I} \frac{|G|^2}{n}=0.\]
Hence $u_g$ belongs to the closed linear span of the $f_{a,b}$, and $\alpha$ is saturated.

For $G$ compact and second countable, we are not able to describe the approximating functions $f_{a,b}$ explicitly.
(In fact, their existence is a consequence of the Stone-Weierstrass theorem.) Our proof consists in showing that
one can build approximating functions in $A\rtimes_\alpha G$ using approximating functions in
$C(G)\rtimes_{\texttt{Lt}}G$.

Suppose that $G$ is compact and $A$ is $\sigma$-unital. For an $\alpha$-invariant subalgebra $D\subseteq A$, denote by
$\gamma_D\colon G\to \Aut(C(G,D))$
the diagonal action $\gamma=\texttt{Lt}\otimes\alpha|_D$. Then $\gamma$ is conjugate to
$\texttt{Lt}\otimes\id_D$ by \autoref{prop:CGAequivisom}. Since
$\texttt{Lt}$ is saturated (see the comments after \autoref{df:saturation}), the action
$\texttt{Lt}\otimes\id_D$ is saturated by \autoref{lma:C(G,D)Sat}. We deduce that $\gamma_D$ is also
saturated.

Since $\|\cdot\|_{L^1(G,A,\alpha)}$ dominates $\|\cdot\|_{A\rtimes_\alpha G}$,
it is enough to show that the span of the functions $f_{a,b}$, with $a,b\in A$, is dense in $L^1(G,A,\alpha)$.
Denote by $\chi_E$ the characteristic function of a Borel set
$E\subseteq G$. It is a standard fact that the linear span of
\[\{x\chi_E\colon x\in A, E\subseteq G \mbox{ Borel}\}\]
is dense in $L^1(G,A,\alpha)$.
So fix $x\in A$ and a Borel subset $E\subseteq G$.
Fix $\ep>0$. Using that $\gamma_A$ is saturated, choose
$m\in\N$ and $a_1,\ldots,a_m,b_1,\ldots,b_m\in C(G,A)$ such that
\[\left\|\sum_{j=1}^m f_{a_j,b_j}-x\chi_E\right\|<\ep,\]
where the norm is taken in $C(G,A)\rtimes_\gamma G$.

Denote by $D$ the separable, $\alpha$-invariant subalgebra of $A$ generated by the set $\{a_j,b_j\colon j=1,\ldots,m\}$.
Let $\varphi\colon C(G)\to F_{\alpha}(D,A)$ be a unital equivariant homomorphism
as in the definition of the \Rp\ for $\alpha$.
Let $\psi\colon C(G,D)\to A_{\I,\alpha}$ be the equivariant homomorphism given by
\autoref{lma:LiftFromF}. Write
\[\widehat{\psi}\colon C(G,D)\rtimes_{\gamma_D}G\to A_{\I,\alpha}\rtimes_{\alpha_\I} G,\]
for the induced map at the level of the crossed products.
Under the canonical embedding
\[A_{\I,\alpha}\rtimes_{\alpha_\I} G\hookrightarrow (A\rtimes_\alpha G)_\I\]
provided by Proposition~2.1 in~\cite{Gar_RegPropCPRdim}, we will regard $\widehat{\psi}$ as a homomorphism
\[\widehat{\psi}\colon C(G,D)\rtimes_{\gamma}G\to (A\rtimes_\alpha G)_\I.\]
It is clear that
$\widehat{\psi}(f_{a_j,b_j})=f_{\psi(a_j),\psi(b_j)}$ for all $j=1,\ldots,m$, and that $\widehat{\psi}(x\chi_E)=x\chi_E$.
Hence
\begin{align*}
\left\|\sum_{j=1}^m f_{\psi(a_j),\psi(b_j)}-x\chi_E\right\|_{(A\rtimes_\alpha G)_\I}
&= \left\|\widehat{\psi}\left(\sum_{j=1}^m f_{a_j,b_j}-x\chi_E\right)\right\|_{(A\rtimes_\alpha G)_\I} \\
&\leq \left\|\sum_{j=1}^m f_{a_j,b_j}-x\chi_E\right\|<\ep.\end{align*}

To finish the proof, for $j=1,\ldots,m$, choose bounded sequences $(\psi(a_j)_n)_{n\in\N}$ and
$(\psi(b_j)_n)_{n\in\N}$ in $C(G,D)$ which represent $\psi(a_j)$ and $\psi(b_j)$,
respectively. Then
\[\eta_{A\rtimes_\alpha G}\left(\left(f_{\psi(a_j)_n,\psi(b_j)_n}\right)_{n\in\N}\right)
=f_{\psi(a_j),\psi(b_j)}.\]

It follows that for $n$ large enough, we have
\[\left\|\sum_{j=1}^m f_{\psi(a_j)_n,\psi(b_j)_n}-x\chi_E\right\|_{A\rtimes_\alpha G}<\ep,\]
showing that $\alpha$ is saturated.

The last part of the statement follows from Rieffel's original definition of saturation (Definition~7.1.4
in~\cite{Phi_Book}; see also Proposition~7.1.3 in~\cite{Phi_Book}).
\end{proof}

\begin{rem}\label{rem:CPisFixPtAlg}
In this paper, we will show that a number of properties pass from $A$ to $A^\alpha$ and $A\rtimes_\alpha G$.
These properties are all preserved by Morita equivalence. Our strategy will be to show first that the property
in question passes to the fixed point algebra. Once this is accomplished, \autoref{prop:saturated} will imply
the result for $A\rtimes_\alpha G$. An alternative to this approach is as follows: with $\lambda\colon G\to\U(L^2(G))$
denoting the left regular representation, the crossed product
$A\rtimes_\alpha G$ is isomorphic to the fixed point algebra
\[(A\otimes\K(L^2(G)))^{\alpha\otimes\Ad(\lambda)}.\]
Now, if $\alpha$ has the Rokhlin property, it is immediate to check that so does $\alpha\otimes\Ad(\lambda)$.
If the property in question has been shown to pass to fixed point algebras by Rokhlin actions and is invariant under Morita equivalence,
then it follows that it also passes to their crossed products.
\end{rem}

We recall here that if $\alpha\colon G\to\Aut(A)$ is an action of a compact group on a $\sigma$-unital \ca\ $A$, then we have
the following estimates of the nuclear dimension (Definition~2.1 in~\cite{WinZac}) and decomposition rank (Definition~3.1
in~\cite{KirWin_covdim}) of the crossed product in terms of those of $A$ and the Rokhlin dimension of $\alpha$ (Definition~3.2
in~\cite{Gar_Rdim}):
\[\dimnuc(A\rtimes_\alpha G)\leq (\dimnuc(A)+1)(\dimRok(\alpha_+1)-1,\]
and
\[\dr(A\rtimes_\alpha G)\leq (\dr(A)+1)(\dimRok(\alpha_+1)-1.\]
(For the proofs, see Theorems~3.3 and 3.4 in~\cite{Gar_RegPropCPRdim} for the case when $A$ is unital, and see
\cite{GHS_preparation} for the case of arbitrary $\sigma$-unital $A$.)

Since unital completely positive maps of order zero are necessarily homomorphisms, it is easy to see
that the Rokhlin property for a compact group action agrees with Rokhlin dimension zero in the sense of
Definition~3.2 in \cite{Gar_Rdim}. In particular, we deduce the following.

\begin{thm}\label{thm:DimNucDR}
Let $A$ be a $\sigma$-unital \ca, let $G$ be a second-countable compact group, and let
$\alpha\colon G\to\Aut(A)$ be an action with the \Rp. Then
\begin{align*} \dimnuc(A^\alpha)&=\dimnuc(A\rtimes_\alpha G)\leq \dimnuc(A), \ \mbox{ and } \\
\dr(A^\alpha)&=\dr(A\rtimes_\alpha G)\leq \dr(A).\end{align*}\end{thm}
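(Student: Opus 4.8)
The plan is to combine three facts that are already in place: the nuclear-dimension and decomposition-rank estimates for crossed products recalled immediately above, the identification of the Rokhlin property with Rokhlin dimension zero, and the Morita equivalence between $A^\alpha$ and $A\rtimes_\alpha G$ established in \autoref{prop:saturated}. No genuinely new argument is needed; the content is in assembling these correctly.

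First I would record that $\dimRok(\alpha)=0$. In \autoref{df:Rp} the equivariant unital homomorphism $\varphi\colon(C(G),\texttt{Lt})\to(F_\alpha(D,A),F(\alpha))$ is in particular a unital completely positive order zero map, and conversely every unital completely positive order zero map is automatically a homomorphism; hence the defining condition of the Rokhlin property coincides with that of Rokhlin dimension zero in the sense of Definition~3.2 in~\cite{Gar_Rdim}. Substituting $\dimRok(\alpha)=0$ into the displayed inequalities gives
\[\dimnuc(A\rtimes_\alpha G)\leq(\dimnuc(A)+1)(0+1)-1=\dimnuc(A),\]
and, in exactly the same way, $\dr(A\rtimes_\alpha G)\leq\dr(A)$; here I would invoke the versions of those estimates valid for arbitrary $\sigma$-unital $A$, as in~\cite{GHS_preparation}.

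It then remains to transfer these bounds to the fixed point algebra. By \autoref{prop:saturated} the action $\alpha$ is saturated, so $A^\alpha$ and $A\rtimes_\alpha G$ are Morita equivalent. Both algebras are $\sigma$-unital --- for $A^\alpha$, averaging a countable approximate unit of $A$ over $G$ produces one for $A^\alpha$, and $A\rtimes_\alpha G$ is $\sigma$-unital because $A$ is and $G$ is second countable --- so the Brown--Green--Rieffel theorem gives $A^\alpha\otimes\K\cong(A\rtimes_\alpha G)\otimes\K$. Since both $\dimnuc$ and $\dr$ are unchanged under tensoring with $\K$ (Winter--Zacharias~\cite{WinZac} and Kirchberg--Winter~\cite{KirWin_covdim}), we conclude $\dimnuc(A^\alpha)=\dimnuc(A\rtimes_\alpha G)$ and $\dr(A^\alpha)=\dr(A\rtimes_\alpha G)$, which combined with the previous step is precisely the claim. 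One could instead bypass \autoref{prop:saturated} via the identification $A\rtimes_\alpha G\cong(A\otimes\K(L^2(G)))^{\alpha\otimes\Ad(\lambda)}$ from \autoref{rem:CPisFixPtAlg}, but the Morita-equivalence route is shorter.

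The argument has no substantial obstacle; the only point requiring care is to confirm that each cited ingredient applies in the needed generality --- that is, that the crossed-product dimension estimates and the $\K$-stability of $\dimnuc$ and $\dr$ hold for $\sigma$-unital, not merely separable unital, algebras. Both are available, by \cite{GHS_preparation} and by the original proofs in \cite{WinZac} and \cite{KirWin_covdim} respectively.
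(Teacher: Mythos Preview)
Your proposal is correct and follows essentially the same approach as the paper's proof: the paper also derives the inequalities by substituting $\dimRok(\alpha)=0$ into the estimates recalled just before the theorem, and obtains the equalities from \autoref{prop:saturated} via Morita invariance of $\dimnuc$ and $\dr$. Your version simply spells out in more detail why Morita equivalence preserves these dimensions (via Brown--Green--Rieffel and $\K$-stability), which the paper asserts without further comment.
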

\begin{proof}
The equalities $\dimnuc(A^\alpha)=\dimnuc(A\rtimes_\alpha G)$ and $\dr(A^\alpha)=\dr(A\rtimes_\alpha G)$
follow from \autoref{prop:saturated}, Morita equivalent \ca s have the same nuclear dimension and decomposition rank.
The two inequalities follow from the comments before this theorem, since $\dimRok(\alpha)=0$. \end{proof}

\begin{cor}\label{cor:AFpreserved}
Let $A$ be an AF-algebra, let $G$ be a second-countable compact group, and let
$\alpha\colon G\to\Aut(A)$ be an action with the \Rp. Then $A^\alpha$ and $A\rtimes_\alpha G$
are AF-algebras.\end{cor}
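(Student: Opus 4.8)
The plan is to deduce this at once from \autoref{thm:DimNucDR} together with the Kirchberg--Winter characterization of AF-algebras: a separable \ca\ is an AF-algebra if and only if its decomposition rank is zero (equivalently, if and only if its nuclear dimension is zero). Since a \fd\ \ca\ has decomposition rank zero and the decomposition rank does not increase under direct limits, an AF-algebra $A$ satisfies $\dr(A)=0$ (and $\dimnuc(A)=0$).

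First I would record the relevant separability. As $A$ is an AF-algebra it is separable, and since $G$ is second-countable compact, $C(G,A)\cong C(G)\otimes A$ is separable; hence so is its $L^1$-completion and therefore the quotient $A\rtimes_\alpha G$. The fixed point algebra $A^\alpha$ is a subalgebra of $A$, hence separable as well.

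Next I would apply \autoref{thm:DimNucDR} to $\alpha$, obtaining
\[\dr(A^\alpha)=\dr(A\rtimes_\alpha G)\leq \dr(A)=0,\]
so that $A^\alpha$ and $A\rtimes_\alpha G$ both have decomposition rank zero. Combining this with the separability established in the previous paragraph and the Kirchberg--Winter characterization quoted above, we conclude that $A^\alpha$ and $A\rtimes_\alpha G$ are AF-algebras.

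I expect no genuine obstacle here: all the real content sits in \autoref{prop:saturated} (giving the equality, via Morita invariance of the decomposition rank) and in the Rokhlin-dimension estimate underlying \autoref{thm:DimNucDR} (giving the inequality, since $\dimRok(\alpha)=0$). The only points to keep track of are that the AF characterization via $\dr$ requires separability — which is why the separability of the crossed product is checked explicitly — and that one may run the identical argument with $\dimnuc$ in place of $\dr$. An alternative, more self-contained route would avoid \autoref{thm:DimNucDR} altogether by writing $A$ as a direct limit of \fd\ (hence weakly semiprojective) \ca s and invoking the permanence results for such inductive limits, but the decomposition-rank argument is shorter.
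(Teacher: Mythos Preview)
Your argument is correct and follows exactly the same route as the paper's proof: invoke \autoref{thm:DimNucDR} and the Kirchberg--Winter characterization of separable AF-algebras via decomposition rank zero. You simply spell out the separability verification that the paper leaves implicit, and your closing remark about the alternative inductive-limit approach is precisely the second proof the paper gives later in \autoref{cor:ParticularClasses}.
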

\begin{proof} Since a separable \ca\ has decomposition rank zero if and only if it is an
AF-algebra (Example~4.1 in \cite{KirWin_covdim}), the result follows from \autoref{thm:DimNucDR}.\end{proof}

The following result will be crucial in obtaining further structural properties for crossed products
by actions with the Rokhlin property. The proof that we present below was suggested to us by the
referee, to whom we are indebted. Our original argument was more technical and involved using certain
partitions of unity in $C(G)$ with small enough supports as in Lemma~4.2 in~\cite{Gar_RegPropCPRdim}.

\begin{thm}\label{thm:ApproxHom}
Let $A$ be a $\sigma$-unital \ca, let $G$ be a second-countable compact group, and let
$\alpha\colon G\to\Aut(A)$ be an action with the \Rp.
Given a compact subset $F_1\subseteq A$,
a compact subset $F_2\subseteq A^\alpha$ and
$\varepsilon>0$, there exists a completely positive contractive map $\psi\colon A\to A^\alpha$
such that
\be
\item For all $a,b\in F_1$, we have
\[\|\psi(ab)-\psi(a)\psi(b)\|<\ep;\]
\item For all $a\in F_2$, we have $\|\psi(a)-a\|<\ep$.\ee
Moreover, if $A$ is unital, then we can choose $\psi$ so that $\psi(1)=1$.

In particular, when $A$ is separable, there exists an approximate homomorphism
$(\psi_n)_{n\in\N}$ consisting of completely positive contractive linear maps
$\psi_n\colon A\to A^\alpha$ for $n\in\N$, which can be arranged to be unital if $A$ is,
such that $\lim\limits_{n\to\I}\|\psi_n(a)-a\|=0$ for all $a\in A^\alpha$.
\end{thm}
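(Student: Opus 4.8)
The plan is to produce the map $\psi$ by an averaging construction, using the equivariant unital homomorphism $\varphi\colon (C(G),\texttt{Lt})\to(F_\alpha(D,A),F(\alpha))$ supplied by the Rokhlin property, applied to a separable invariant subalgebra $D$ large enough to contain (approximately) the compact sets $F_1$ and $F_2$. First I would fix $F_1$, $F_2$, and $\ep>0$, and let $D\subseteq A$ be a separable $\alpha$-invariant subalgebra with $F_1\cup F_2\subseteq D$; such a $D$ exists since $F_1,F_2$ are compact hence separable, and one can close up under $\alpha$ and the algebra operations. Apply \autoref{lma:LiftFromF} to $\varphi$ to get an equivariant homomorphism $\psi_0\colon C(G)\otimes_{\mathrm{max}}D\to A_{\I,\alpha}$, $\psi_0(f\otimes d)=\theta(f)d$; composing with the evaluation-at-the-identity-of-$G$ idea will not directly work, so instead I would integrate.

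The key construction: define $\Psi\colon D\to A_{\I,\alpha}$ by $\Psi(d)=\int_G \alpha_{\I,g}^{-1}\big(\psi_0(\mathbbm{1}\otimes d)\big)\,\mathrm{something}$ — more precisely, exploit that $\psi_0(\mathbbm 1\otimes d)=d$ in $A_\I$ (since $\varphi$ is unital, $\theta(\mathbbm 1)$ is a unit modulo the annihilator of $D$), so the content is really the \emph{fixed-point} projection. The honest route is: the conditional expectation $E\colon A\to A^\alpha$, $E(a)=\int_G\alpha_g(a)\,dg$, is completely positive and contractive but not multiplicative; the point of the Rokhlin tower is that after twisting $a$ by the Rokhlin homomorphism, $E$ becomes \emph{approximately} multiplicative on $F_1$. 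Concretely I would set, for a carefully chosen positive contraction coming from $\varphi$ applied to a suitable function on $G$, a map of the form $a\mapsto E\big(h\,a\big)$ or a completely positive ``cut-down'' $a\mapsto \sum$ (or integral) $\theta(f_i)^{1/2}\,a\,\theta(f_i)^{1/2}$ averaged over $G$, where the $f_i$ form a partition of unity on $G$. Because $\varphi$ is equivariant for $\texttt{Lt}$, the translates $\alpha_g(\theta(f_i))$ approximately match $\theta(f_i)$ shifted along $G$, and the standard Rokhlin-tower computation (exactly the one carried out explicitly for finite $G$ in the proof of \autoref{prop:saturated}) shows that this cut-down is approximately multiplicative on $F_1$ to within $\ep$, approximately fixes $F_2$ to within $\ep$, lands (after applying $E$) in $A^\alpha$, and is unital when $A$ is unital and $\varphi$ is unital. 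Finally, since the estimates live in $A_\I$ and involve only finitely many elements and a single $\ep$, I would represent everything by bounded sequences and pass to a single index $n$ large enough, obtaining an honest completely positive contractive $\psi\colon A\to A^\alpha$ — defined on all of $A$, not just $D$, by composing the cut-down (which makes sense for any $a\in A$) with $E$. The ``in particular'' clause then follows by applying this for $F_{1,n}\nearrow$ a dense sequence, $F_{2,n}\nearrow$ a dense subset of $A^\alpha$, and $\ep=1/n$, and invoking separability of $A$ to get the sets compact (or working with finite sets, which suffices for an approximate homomorphism).

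**Main obstacle.** The delicate point is the passage from the ``abstract'' picture in $F_\alpha(D,A)$ and $A_{\I,\alpha}$ — where $\varphi$ is a genuine equivariant \emph{homomorphism} — down to a concrete completely positive contractive map $A\to A^\alpha$ with the stated estimates: one must (i) choose the lift $\theta$ and the partition of unity on $G$ so that the non-commutativity of $\theta(f_i)$ with elements of $F_1$ is controlled (this is where $\Ann(D,A_\I)$ and the fact that $\varphi$ lands in the \emph{relative commutant} enter), and (ii) check that the resulting map is contractive and completely positive, which forces the cut-down form $\sum_i\theta(f_i)^{1/2}(\,\cdot\,)\theta(f_i)^{1/2}$ (or its integral analogue) rather than anything involving the unitaries $u_g$. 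I expect essentially no obstacle beyond bookkeeping, since the referee's simplification has replaced the explicit partitions of unity of \cite{Gar_RegPropCPRdim} by the clean equivariant homomorphism $\varphi$; the estimate $\|\psi(ab)-\psi(a)\psi(b)\|<\ep$ reduces to the single computation that $E$ restricted to the range of the Rokhlin cut-down is multiplicative modulo the (annihilator) ideal, which is exactly the mechanism already used in \autoref{prop:saturated}.
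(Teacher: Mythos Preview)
Your plan lands on the right ingredients --- the Rokhlin homomorphism $\varphi$, a lift $\theta$, a cut-down, and an averaging over $G$ --- but the route you sketch is closer to the author's \emph{original} argument (the one the paper says ``involved using certain partitions of unity in $C(G)$ with small enough supports'') than to the proof that actually appears. The paper's proof avoids partitions of unity entirely via two tricks you do not mention. First, rather than working with a lift $\theta$ and then applying the conditional expectation $E$ at the end, the paper averages the Choi--Effros lifts $\varphi_n\colon C(G)\to A$ over $G$ to produce new lifts
\[
\theta_n(f)=\int_G \alpha_g\bigl(\varphi_n(\texttt{Lt}_{g^{-1}}(f))\bigr)\,d\mu(g)
\]
that are \emph{exactly} equivariant. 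Second, it invokes the isomorphism $\sigma\colon A\to C(G,A)^{\texttt{Lt}\otimes\alpha}$, $\sigma(a)(g)=\alpha_g(a)$, obtained from \autoref{prop:CGAequivisom} by passing to fixed points. The map $\rho_n(f\otimes a)=\theta_n(f^{1/2})\,a\,\theta_n(f^{1/2})$ is then exactly equivariant, so $\psi_n=\rho_n\circ\sigma$ automatically takes values in $A^\alpha$ --- no conditional expectation is ever applied, and no partition of unity is chosen.

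This matters because the step in your sketch where you ``apply $E$'' to land in $A^\alpha$ is the one genuine gap. The expectation $E$ is not multiplicative, so composing an approximately multiplicative cut-down with $E$ need not stay approximately multiplicative on $F_1$ (which lies in $A$, not in $A^\alpha$). To rescue your argument you would first have to show that the cut-down $a\mapsto\sum_i\theta(f_i)^{1/2}a\,\theta(f_i)^{1/2}$ is itself approximately $G$-equivariant, uniformly in $g$, so that $E$ is approximately the identity on its range; only then can approximate multiplicativity survive. That is doable --- it is essentially the ``more technical'' original proof --- but it is precisely the bookkeeping the paper's version sidesteps by forcing $\theta_n$ to be exactly equivariant from the start. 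Your remark that ``the referee's simplification has replaced the explicit partitions of unity \ldots\ by the clean equivariant homomorphism $\varphi$'' misidentifies the simplification: $\varphi$ was already there; what is new is the averaging of the lift and the use of $\sigma$.
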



\begin{proof} Denote by $D$ the separable, $\alpha$-invariant subalgebra generated by $F_1\cup F_2$.
Use the Rokhlin property for $\alpha$ to choose a unital equivariant homomorphism $\varphi\colon C(G)\to F_\alpha(D,A)$.
Using Choi-Effros lifting theorem, find a lift $(\varphi_n)_{n\in\N}$ of $\varphi$ consisting of completely
positive, contractive maps $\varphi_n\colon C(G)\to A$, which must then satisfy
\be
\item[(a)] $\lim\limits_{n\to\I}\|\varphi_n(ab)d-\varphi_n(a)\varphi_n(b)d\|=0$ for all $a,b\in A$ and for all $d\in D$;
\item[(b)] $\lim\limits_{n\to\I} \|\varphi_n(1)d-d\|=0$ for all $d\in D$ (one can arrange that $\varphi_n(1)=1$ if $A$ is unital);
\item[(c)] $\lim\limits_{n\to\I} \|\varphi_n(f)d-d\varphi_n(f)\|=0$ for all $d\in D$ and for all $f\in C(G)$;
\item[(d)] $\lim\limits_{n\to\I} \sup\limits_{g\in G}\|\varphi_n(\texttt{Lt}_g(f))d-\alpha_g(\varphi_n(f))d\|=0$ for all $f\in C(G)$ and for all $d\in D$.
\ee
(In the last condition, the fact that $\|\varphi_n(\texttt{Lt}_g(f))d-\alpha_g(\varphi_n(f))d\|$ goes to
zero \emph{uniformly} on $g\in G$, and not just pointwise, follows from Dini's theorem using that the image of $\varphi$ lands in the part
of $F(D,A)$ where $G$ acts continuously; see \autoref{df:CtralSeq}.)

Denote by $\mu$ the normalized Haar measure on $G$. For $n\in\N$, define $\theta_n\colon C(G)\to A$ by
\[\theta_n(f)=\int_G \alpha_g(\varphi_n(\texttt{Lt}_{g^{-1}}(f))) \ d\mu(g)\]
for $f\in C(G)$. It is clear that $\theta_n$ is completely positive and contractive, and it is easy to check that
it is equivariant using translation invariance of $\mu$. Fix $f\in C(G)$ and $d\in D$. We use condition (d) at the last step to obtain
\begin{align*}
\limsup_{n\to\I} \|\theta_n(f)d-\varphi_n(f)d\|
&=  \limsup_{n\to\I} \left\|\int_G \alpha_g(\varphi_n(\texttt{Lt}_{g^{-1}}(f)))d -\varphi_n(f)d \ d\mu(g)\right\|\\
&\leq  \limsup_{n\to\I}\int_G \|\alpha_g(\varphi_n(\texttt{Lt}_{g^{-1}}(f)))d -\varphi_n(f)d\| \ d\mu(g)\\
&\leq \limsup_{n\to\I} \sup\limits_{g\in G}\|\alpha_g(\varphi_n(\texttt{Lt}_{g^{-1}}(f)))d -\varphi_n(f)d\|=0.
\end{align*}
We deduce that $\lim\limits_{n\to\I} \|\theta_n(f)d-\varphi_n(f)d\|$ exists and equals zero. It follows that the map
$\theta\colon C(G)\to F_\alpha(D,A)$ that
$(\theta_n)_{n\in\N}$ determines is also a lift for $\varphi$. In particular, the maps $\theta_n$ satisfy conditions (a), (b) and (c) above,
while condition (d) is satisfied exactly for each $\theta_n$.

Now, for $n\in\N$, define $\rho_n\colon C(G)\otimes A\to A$ by
\[\rho_n(f\otimes a)=\theta_n(f^{\frac{1}{2}}) a\theta_n(f^{\frac{1}{2}})\]
for $f\in C(G)$ with $f\geq 0$ (and extended linearly), and for all $a\in A$. Then $\rho_n$ is completely positive and contractive. It is also equivariant,
since for $f\in C(G)_+$ and $a\in A$, we have
\begin{align*}
\alpha_g(\rho_n(f\otimes a))&=\alpha_g\left(\theta_n(f^{\frac{1}{2}}) a\theta_n(f^{\frac{1}{2}})\right)\\
&= \alpha_g\left(\theta_n(f^{\frac{1}{2}})\right) \alpha_g(a) \alpha_g\left(\theta_n(f^{\frac{1}{2}})\right)\\
&=\theta_n(\texttt{Lt}_g(f^{\frac{1}{2}})) \alpha_g(a)\theta_n(\texttt{Lt}_g(f^{\frac{1}{2}}))\\
&= \rho_n\left(\texttt{Lt}_g(f)\otimes \alpha_g(a)\right)
\end{align*}
for all $g\in G$. Observe also that
\[\lim_{n\to\I}\|\rho_n(f\otimes d)-\theta_n(f)d\|=0\]
for all $f\in C(G)$ (not just for $f\geq 0$) and for all $d\in D$, by condition (c) above. In particular, for $f_1,f_2\in C(G)$ and
$d_1,d_2\in D$, we use condition (a) above applied to $\theta_n$ to deduce that
\begin{align*}
&\limsup_{n\to\I}\|\rho_n(f_1f_2\otimes d_1d_2)-\rho_n(f_1\otimes d_1)\rho_n(f_2\otimes d_2)\|\\
& \ \ \ \ \ \ \ = \limsup_{n\to\I}\|\theta_n(f_1f_2) d_1d_2-\theta_n(f_1) d_1\theta_n(f_2) d_2\|\\
&  \ \ \ \ \ \ \ =  \limsup_{n\to\I}\|\left[\theta_n(f_1f_2)-\theta_n(f_1)\theta_n(f_2)\right] d_1d_2\|=0.
\end{align*}
It follows that the restrictions of the maps $\rho_n$ to $C(G)\otimes D$ determine an asymptotically
multiplicative map $C(G)\otimes D\to A$.

By taking fixed point algebras in the conclusion of \autoref{prop:CGAequivisom}, we deduce that
there is an isomorphism $\sigma\colon A\to C(G,A)^{\gamma_A}$
given by $\sigma(a)(g)=\alpha_g(a)$ for $a\in A$ and $g\in G$. In particular, and under the identification of $C(G,A)$ with
$C(G)\otimes A$, the isomorphism $\sigma$ satisfies
$\sigma(a)=1\otimes a$ for all $a\in A^{\alpha}$. For $n\in\N$, let $\psi_n\colon A\to A^\alpha$
be given by $\psi_n=\rho_n\circ\sigma$.

Given $a,b\in F_1\subseteq D$, we have
\begin{align*} \limsup_{n\to\I}\|\psi_n(ab)-\psi_n(a)\psi_n(b)\|
&=\limsup_{n\to\I}\|\rho_n(\sigma(ab))-\rho_n(\sigma(a))\rho_n(\sigma(b))\|\\
&=\limsup_{n\to\I}\|\rho_n(\sigma(a)\sigma(b))-\rho_n(\sigma(a))\rho_n(\sigma(b))\|=0,\end{align*}
since $\sigma(a),\sigma(b)$, and $\sigma(ab)$ belong to $C(G)\otimes D$ and the maps $\rho_n$ are asymptotically multiplicative
on $C(G)\otimes D$.

Finally, given $a\in F_2\subseteq D$, we use condition (c) above for $\theta_n$ at the third step, and condition (b) at the fourth step to get
\begin{align*} \limsup_{n\to\I}\|\psi_n(a)-a\|&=\limsup_{n\to\I}\|\rho_n(\sigma(a))-a\|\\
&=\limsup_{n\to\I}\|\rho_n(1\otimes a)-a\|\\
&=\limsup_{n\to\I}\|\theta_n(1)a\theta_n(1)-a\|\\
&=\limsup_{n\to\I}\|\theta_n(1)a-a\|=0.\end{align*}

The conclusion then follows by setting $\psi=\psi_n$ for $n$ large enough. It is clear that the $\psi_n$ are unital if the $\theta_n$
are, which can always be arranged if $A$ is unital.

\end{proof}

\begin{rem}\label{rem:CommDiagrApprHom}
Adopt the notation from the theorem above. Then there is a commutative diagram
\beqa\xymatrix{A^\alpha\ar@{^{(}->}[dr]\ar[rr]^-{\id_{A^\alpha}} & & (A^\alpha)_\I. \\
& A\ar[ur]_-{\psi}}\eeqa
When $A$ is nuclear, Choi-Effros lifting theorem shows that the existence of a commutative
diagram as above is in fact equivalent to the conclusion in \autoref{thm:ApproxHom}. In the
general case, however, the existence of such a diagram is a weaker assumption. Barlak and
Szabo have independently identified this notion (see, for example, \cite{Sza_cRp}),
and have begun a systematic study of
this concept in its own right; see \cite{BarSza_preparation}.
\end{rem}

This work consists in showing
that a number of properties for $A$ pass to $A^\alpha$ (and $A\rtimes_\alpha G$). We state
our results assuming the Rokhlin property, but we really only use the existence of a commutative
diagram as in \autoref{rem:CommDiagrApprHom}. As such, our results are valid in a more general
context, and the extra flexibility will be needed in \cite{GHS_preparation}, where we study
crossed products by more general actions.

\vspace{0.3cm}

Our first application of \autoref{thm:ApproxHom} is to the ideal structure of crossed products and fixed
point algebras. In the presence of the Rokhlin property, we can describe all ideals: they are naturally
induced by invariant ideals in the original algebra.

\begin{thm}\label{thm:IdealStruct}
Let $A$ be a $\sigma$-unital \ca, let $G$ be a second-countable compact group, and let
$\alpha\colon G\to\Aut(A)$ be an action with the \Rp.
\be\item If $I$ is an ideal in $A^\alpha$, then there exists an $\alpha$-invariant ideal $J$
in $A$ such that $I=J\cap A^\alpha$.
\item If $I$ is an ideal in $A\rtimes_\alpha G$, then there exists an $\alpha$-invariant ideal $J$
in $A$ such that $I=J\rtimes_\alpha G$.\ee
\end{thm}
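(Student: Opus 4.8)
The plan is to establish (1) directly from the approximate homomorphism $\psi$ furnished by \autoref{thm:ApproxHom}, and then to deduce (2) from (1) together with the saturation of $\af$ proved in \autoref{prop:saturated}.

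\emph{Part (1).} Given an ideal $I$ of $A^\af$, take $J$ to be the closed two-sided ideal of $A$ generated by $I$. Since $I\subseteq A^\af$ is fixed pointwise, $\af_g(J)=J$ for all $g\in G$, so $J$ is $\af$-invariant, and $I\subseteq J\cap A^\af$ is immediate. The content is the reverse inclusion. Fix $a\in J\cap A^\af$ and $\ep>0$, and choose $x_1,\dots,x_m,y_1,\dots,y_m\in A$ and $b_1,\dots,b_m\in I$ with $\bigl\|a-\sum_{k=1}^m x_kb_ky_k\bigr\|<\ep$. Let $F_1\subseteq A$ be the finite (hence compact) set $\{x_k,\,y_k,\,b_k,\,x_kb_k:1\le k\le m\}$ and $F_2\subseteq A^\af$ the finite set $\{a,b_1,\dots,b_m\}$, and apply \autoref{thm:ApproxHom}, with a tolerance $\dt>0$ to be chosen, to obtain a contractive completely positive map $\psi\colon A\to A^\af$ which is $\dt$-multiplicative on $F_1$ and satisfies $\|\psi(c)-c\|<\dt$ for every $c\in F_2$. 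Applying $\dt$-multiplicativity to the factorizations $(x_kb_k)\cdot y_k$ and $x_k\cdot b_k$, using $\|\psi(b_k)-b_k\|<\dt$ and $\|\psi(a)-a\|<\dt$, and using that $\psi$ is linear and contractive, one obtains an estimate of the form
\[
\Bigl\|\,a-\sum_{k=1}^m\psi(x_k)\,b_k\,\psi(y_k)\,\Bigr\|\;<\;\ep+\dt\Bigl(1+\sum_{k=1}^m\bigl(1+\|y_k\|+\|x_k\|\,\|y_k\|\bigr)\Bigr).
\]
Since $\psi(x_k),\psi(y_k)\in A^\af$ and $b_k\in I$, the element $\sum_{k=1}^m\psi(x_k)b_k\psi(y_k)$ lies in $A^\af I A^\af\subseteq I$; hence, choosing $\dt$ small enough that the right-hand side is at most $2\ep$, we see that $a$ lies within $2\ep$ of $I$. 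As $\ep>0$ was arbitrary and $I$ is closed, $a\in I$, which gives $J\cap A^\af\subseteq I$.

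\emph{Part (2).} By \autoref{prop:saturated}, $\af$ is saturated, so the corner $\iota(A^\af)\subseteq A\rtimes_\af G$ from the discussion preceding \autoref{df:saturation} is full, where $\iota\colon A^\af\hookrightarrow A\rtimes_\af G$ is Rosenberg's embedding. Consequently $K\mapsto K\cap\iota(A^\af)$ is a bijection from the ideals of $A\rtimes_\af G$ onto the ideals of $\iota(A^\af)\cong A^\af$, so it suffices to produce an $\af$-invariant ideal $J$ of $A$ for which $(J\rtimes_\af G)\cap\iota(A^\af)=I\cap\iota(A^\af)$. Put $I_0=\iota^{-1}(I)$, an ideal of $A^\af$, and use part (1) to obtain an $\af$-invariant ideal $J$ of $A$ with $I_0=J\cap A^\af$. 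Since $J$ is $\af$-invariant, $J\rtimes_\af G$ is an ideal of $A\rtimes_\af G$, and a direct computation with the averaging projection $p=\int_G u_g\,d\mu(g)$ — using that $a\mapsto\int_G\af_g(a)\,d\mu(g)$ maps $J$ onto $J\cap A^\af$ — identifies $(J\rtimes_\af G)\cap\iota(A^\af)$ with $\iota(J\cap A^\af)=\iota(I_0)=I\cap\iota(A^\af)$. Hence $I=J\rtimes_\af G$.

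\emph{Main obstacle.} Given saturation, part (2) is essentially formal manipulation of the corner ideal correspondence. The substance is the reverse inclusion in part (1), and the delicate point there is the bookkeeping: a \emph{single} application of \autoref{thm:ApproxHom} must simultaneously control multiplicativity on all products $x_kb_k$, approximate fixedness of $a$ and of each $b_k$, and the accumulation of the $m$ error terms, so that the total error is driven below a fixed multiple of $\ep$.
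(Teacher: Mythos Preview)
Your proof is correct and follows the same route as the paper. Part (1) is essentially identical: define $J=\overline{AIA}$, and for the nontrivial inclusion push an approximant $\sum_k x_kb_ky_k$ of $a\in J\cap A^\af$ through the approximate homomorphism $\psi$ of \autoref{thm:ApproxHom} to land in $I$; your bookkeeping and the paper's differ only cosmetically. For part (2) the paper simply says ``follows from (1) together with the fact that $\alpha$ is saturated,'' whereas you spell out the full-corner ideal correspondence and the identification $(J\rtimes_\af G)\cap\iota(A^\af)=\iota(J\cap A^\af)$ via the averaging projection---this is exactly the content the paper is invoking.
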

\begin{proof}
(1). Let $I$ be an ideal in $A^\alpha$. Then $J=\overline{AIA}$ is an $\alpha$-invariant ideal in $A$.
We claim that $J\cap A^\alpha=I$. It is clear that $I\subseteq J\cap A^\alpha$. For the reverse
inclusion, let $x\in J\cap A^\alpha$, that is, an $\alpha$-invariant element in $\overline{AIA}$. For
$n\in\N$, choose $m_n\in\N$, elements
$a^{(n)}_1,\ldots,a^{(n)}_{m_n}, b^{(n)}_1,\ldots,b^{(n)}_{m_n}$ in $A$,
and elements $x^{(n)}_1,\ldots,x^{(n)}_{m_n}$ in $I$, such that
\[\left\|x-\sum_{j=1}^{m_n}a^{(n)}_jx^{(n)}_jb^{(n)}_j\right\|<\frac{1}{n}.\]

Set $M_n=\max\limits_{j=1,\ldots,m_n} \{\|a^{(n)}_j\|, \|b^{(n)}_j\|,1\}$.
Let $(\psi_n)_{n\in\N}$ be a sequence of completely positive contractive maps $\psi_n\colon A\to A^\alpha$
as in the conclusion of \autoref{thm:ApproxHom} for the choices $\ep_n=\frac{1}{nm_nM^2_n}$ and
\[F_1^{(n)}=\{a^{(n)}_j,x^{(n)}_j,b^{(n)}_j\colon j=1,\ldots,m_n\}\cup\{x\}\]
and $F_2^{(n)}=\{x^{(n)}_j\colon j=1,\ldots,m_n\}\cup\{x\}$. Then
\[\left\|\psi_n\left(\sum_{j=1}^{m_n}a^{(n)}_jx^{(n)}_jb^{(n)}_j\right)-x\right\|
<\frac{1}{n}+\frac{1}{nm_nM_n^2}\leq \frac{2}{n}\]
and
\begin{align*}
&\left\|\psi_n\left(\sum_{j=1}^{m_n}a^{(n)}_jx^{(n)}_jb^{(n)}_j\right)-\sum_{j=1}^{m_n}
\psi_n(a^{(n)}_j)x^{(n)}_j\psi_n(b^{(n)}_j)\right\| \\
& \ \ \ \ \leq
\frac{1}{n}+\left\|\psi_n\left(\sum_{j=1}^{m_n}a^{(n)}_jx^{(n)}_jb^{(n)}_j\right)-\sum_{j=1}^{m_n}
\psi_n(a^{(n)}_j)\psi_n(x^{(n)}_j)\psi_n(b^{(n)}_j)\right\|\\
& \ \ \ \ \leq
\frac{1}{n}+\frac{1}{nM_n}+\left\|\psi_n\left(\sum_{j=1}^{m_n}a^{(n)}_jx^{(n)}_jb^{(n)}_j\right)-\sum_{j=1}^{m_n}
\psi_n(a^{(n)}_jx^{(n)}_j)\psi_n(b^{(n)}_j)\right\|\\
& \ \ \ \ \leq
\frac{1}{n}+\frac{2}{nM_n}\leq \frac{3}{n}.
\end{align*}

We conclude that
\[\left\|x-\sum_{j=1}^{m_n}
\psi_n(a^{(n)}_j)x^{(n)}_j\psi_n(b^{(n)}_j)\right\|<\frac{5}{n}.\]

Since $\sum\limits_{j=1}^{m_n}
\psi_n(a^{(n)}_j)x^{(n)}_j\psi_n(b^{(n)}_j)$ belongs to $I$, it follows that $x$ is a limit of elements in $I$,
and hence it belongs to $I$ itself.

(2). This follows from (1) together with the fact that $\alpha$ is saturated
(see \autoref{prop:saturated}). Alternatively, use \autoref{rem:CPisFixPtAlg} together
with the fact that the ideals in $A\otimes\K(L^2(G))$ have the form $I\otimes\K(L^2(G))$ for some ideal $I$
in $A$.
\end{proof}

\begin{cor} \label{cor:Simple}
Let $A$ be a $\sigma$-unital \ca, let $G$ be a second-countable compact group, and let
$\alpha\colon G\to\Aut(A)$ be an action with the \Rp. If $A$ is simple, then so are $A^\alpha$ and
$A\rtimes_\alpha G$.\end{cor}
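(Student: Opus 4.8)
The plan is to derive everything from \autoref{thm:IdealStruct}, after first checking that the algebras in question are nonzero. Since $A$ is simple it is in particular nonzero, and the canonical conditional expectation $E\colon A\to A^\alpha$ given by $E(a)=\int_G\alpha_g(a)\,d\mu(g)$ (Bochner integral, $\mu$ the normalized Haar measure) is faithful: if $a\in A$ satisfies $a\geq 0$ and $E(a)=0$, then each $\alpha_g(a)$ is positive and $\int_G\alpha_g(a)\,d\mu(g)=0$, forcing $\alpha_g(a)=0$ for all $g\in G$ by norm-continuity of $g\mapsto\alpha_g(a)$, and in particular $a=\alpha_e(a)=0$. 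Hence $A^\alpha\neq 0$. (When $A$ is unital one can instead simply invoke \autoref{thm:ApproxHom}: the map $\psi$ there is unital, so $1\in A^\alpha$.)

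Next I would handle the fixed point algebra. Let $I\subseteq A^\alpha$ be an ideal. By part (1) of \autoref{thm:IdealStruct} there is an $\alpha$-invariant ideal $J\subseteq A$ with $I=J\cap A^\alpha$. Simplicity of $A$ forces $J=\{0\}$ or $J=A$, and therefore $I=\{0\}$ or $I=A^\alpha$. Combined with $A^\alpha\neq 0$, this shows that $A^\alpha$ has no proper nonzero ideals, i.e. $A^\alpha$ is simple.

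Finally, for the crossed product there are several equivalent routes. The most direct is to repeat the previous argument using part (2) of \autoref{thm:IdealStruct}: every ideal $I\subseteq A\rtimes_\alpha G$ has the form $J\rtimes_\alpha G$ for an $\alpha$-invariant ideal $J\subseteq A$, so $J\in\{\{0\},A\}$ gives $I\in\{\{0\},A\rtimes_\alpha G\}$; and $A\rtimes_\alpha G\neq 0$ because by \autoref{prop:saturated} it is Morita equivalent to $A^\alpha\neq 0$. Alternatively, one can skip \autoref{thm:IdealStruct}(2) entirely: \autoref{prop:saturated} gives a Morita equivalence between $A^\alpha$ and $A\rtimes_\alpha G$, which induces an isomorphism of ideal lattices, so simplicity of $A^\alpha$ transfers to $A\rtimes_\alpha G$. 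Yet another option is to use \autoref{rem:CPisFixPtAlg}: $A\rtimes_\alpha G\cong (A\otimes\K(L^2(G)))^{\alpha\otimes\Ad(\lambda)}$, the action $\alpha\otimes\Ad(\lambda)$ has the Rokhlin property whenever $\alpha$ does, and $A\otimes\K(L^2(G))$ is simple since $A$ is, so the fixed point case already proved applies. There is no real obstacle here: the substantive content lives entirely in \autoref{thm:IdealStruct} and \autoref{prop:saturated}, and the only point requiring a moment's care is the bookkeeping that ensures $A^\alpha$ and $A\rtimes_\alpha G$ are nonzero, so that "no proper nonzero ideals" actually means "simple".
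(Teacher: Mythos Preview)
Your proof is correct and follows exactly the route the paper intends: the corollary is stated without proof immediately after \autoref{thm:IdealStruct}, so the paper's implicit argument is precisely the one you spell out. Your extra care in verifying that $A^\alpha$ and $A\rtimes_\alpha G$ are nonzero, and your enumeration of the alternative routes for the crossed product via \autoref{prop:saturated} or \autoref{rem:CPisFixPtAlg}, are welcome elaborations but not departures from the paper's approach.
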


In the following corollary, hereditary saturation is as in Definition~7.2.2 in~\cite{Phi_Book}, while
the strong Connes spectrum for an action of a non-abelian compact group (which is a subset of the set
$\widehat{G}$ of irreducible representations of the group) is as in Definition~1.2
of~\cite{GooLazPel}. (For abelian groups, the notion of strong Connes spectrum was introduced earlier by
Kishimoto in~\cite{Kis_simpleCP}.) We reproduce both definitions below for the convenience of the reader.

\begin{df}\label{df:HerSat}
Let $G$ be a compact group, let $A$ be a \ca, and let $\alpha\colon G\to\Aut(A)$ be an action. We say that
$\alpha$ is \emph{hereditarily saturated} if for every nonzero $\alpha$-invariant hereditary subalgebra
$B\subseteq A$, the restriction $\alpha|_B$ of $\alpha$ to $B$ is saturated, in the sense of \autoref{df:saturation}.
\end{df}

We need some notation, which we borrow from \cite{GooLazPel}. For an action $\alpha\colon G\to\Aut(A)$ of a
compact group $G$ on a \ca\ $A$, and for a unitary representation $\pi\colon G\to\U(\Hi_\pi)$, we set

\[A_2(\pi)=\{x\in A\otimes \K(\Hi_\pi)\colon (\alpha_g\otimes \id)(x)=x(1_A\otimes \pi_g) \ \mbox{ for all } g\in G\}.\]

We denote by $\mathrm{Her}^\alpha(A)$ the family of all nonzero $G$-invariant hereditary subalgebras of $A$.

\begin{df}\label{df:ConnSpec}
Let $G$ be a compact group, let $A$ be a \ca, and let $\alpha\colon G\to\Aut(A)$ be an action. We define the
following spectra for $\alpha$:
\be
\item \emph{Arveson spectrum:}
\[\mathrm{Sp}(\alpha)=\left\{\pi\in\widehat{G}\colon \overline{A_2(\pi)^*A_2(\pi)} \mbox{ is an essential ideal in } (A\otimes \K(\Hi_\pi))^{\alpha\otimes\Ad(\pi)}\right\}.\]
\item \emph{Strong Arveson spectrum:}
\[\widetilde{\mathrm{Sp}}(\alpha)=\left\{\pi\in\widehat{G}\colon \overline{A_2(\pi)^*A_2(\pi)} = (A\otimes \K(\Hi_\pi))^{\alpha\otimes\Ad(\pi)}\right\}.\]
\item \emph{Connes spectrum:}
\[\Gamma(\alpha)=\bigcap_{B\in \mathrm{Her}^\alpha(A)}\mathrm{Sp}(\alpha|_B).\]
\item \emph{Strong Connes spectrum:}
\[\widetilde{\Gamma}(\alpha)=\bigcap_{B\in \mathrm{Her}^\alpha(A)}\widetilde{\mathrm{Sp}}(\alpha|_B).\]
\ee
\end{df}

\begin{cor}\label{cor:HerSatConnSpec}
Let $A$ be a $\sigma$-unital \ca, let $G$ be a second-countable compact group, and let
$\alpha\colon G\to\Aut(A)$ be an action with the \Rp. Then $\alpha$ has
full strong Connes spectrum: $\widetilde{\Gamma}(\alpha)=\widehat{G}$, and it is hereditarily saturated.
\end{cor}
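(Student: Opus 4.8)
The plan is to reduce both assertions to the single fact that $\alpha$ is \emph{hereditarily saturated} in the sense of \autoref{df:HerSat}. Granting this, the first assertion follows from the general principle that, for an action of a compact group, fullness of the strong Connes spectrum is equivalent to hereditary saturation; this equivalence — which ties the spectra of \autoref{df:ConnSpec} to the saturation notions of \autoref{df:saturation} and \autoref{df:HerSat} — is part of the theory developed in \cite{GooLazPel} for non-abelian $G$ (for abelian $G$ it goes back to Kishimoto). So everything comes down to hereditary saturation, and the tool is \autoref{prop:saturated}; what I need is that the \Rp\ is inherited by enough $\alpha$-invariant hereditary subalgebras.

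So fix a nonzero $\alpha$-invariant hereditary subalgebra $B\subseteq A$. I must show the functions $f_{a,b}$ with $a,b\in B$ span a dense subspace of $B\rtimes_\alpha G$, and since the elements $x\chi_E$ (with $x\in B$ and $E\subseteq G$ Borel) already span a dense subspace of $B\rtimes_\alpha G$, it suffices to approximate each $x\chi_E$ by such sums. Since $B$ itself need not be $\sigma$-unital, I first pass to a smaller algebra. Given $x\in B$, the $C^*$-subalgebra $C_0\subseteq B$ generated by $\{\alpha_g(x):g\in G\}$ is separable and $\alpha$-invariant, hence has a $G$-invariant strictly positive element $h$ (average a strictly positive element over the normalized Haar measure of $G$). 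Then $C:=\overline{hBh}$ is a $\sigma$-unital $\alpha$-invariant hereditary subalgebra of $A$ with $x\in C_0=\overline{hC_0h}\subseteq C$, and $C\rtimes_\alpha G$ embeds isometrically into $B\rtimes_\alpha G$; so it is enough to approximate $x\chi_E$ inside $C\rtimes_\alpha G$.

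Now I claim $\alpha|_C$ has the \Rp. Let $D\subseteq C$ be a separable $\alpha$-invariant subalgebra, which is then also a separable $\alpha$-invariant subalgebra of $A$. The inclusion $\ell^\I_{\alpha|_C}(\N,C)\subseteq\ell^\I_\alpha(\N,A)$ induces a $G$-equivariant $*$-homomorphism $F_\alpha(D,C)\to F_\alpha(D,A)$, and I would show this is an isomorphism. Injectivity is immediate. For surjectivity, lift an element of $A_{\I,\alpha}\cap D'$ to a bounded $\alpha$-continuous sequence $(x_n)_n$ in $A$, take a $G$-invariant approximate unit $(u_k)_k$ for $D$ (again by averaging), and set $y_n=u_{k(n)}x_nu_{k(n)}$ with $k(n)\nearrow\I$ sufficiently fast; then $y_n\in C$ since $C$ is hereditary, $(y_n)_n$ remains $\alpha$-continuous because the $u_k$ are $G$-invariant, and the usual approximate-unit estimates give $\|[y_n,d]\|\to0$ and $\|(y_n-x_n)d\|\to0$ for $d\in D$, so $(y_n)_n$ represents a preimage. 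Composing the inverse of this isomorphism with an equivariant unital homomorphism $(C(G),\texttt{Lt})\to(F_\alpha(D,A),F(\alpha))$ provided by the \Rp\ of $\alpha$ on $A$ yields the required homomorphism into $F_\alpha(D,C)$, proving the claim; a version of this inheritance is also available in \cite{Gar_CptRok}. By \autoref{prop:saturated} applied to $C$ and $\alpha|_C$, the action $\alpha|_C$ is saturated, so $x\chi_E$ is a norm-limit of sums $\sum_j f_{a_j,b_j}$ with $a_j,b_j\in C\subseteq B$, in $C\rtimes_\alpha G$ and therefore in $B\rtimes_\alpha G$. Hence $\alpha|_B$ is saturated; as $B$ was arbitrary, $\alpha$ is hereditarily saturated, and $\widetilde{\Gamma}(\alpha)=\widehat{G}$ follows from the equivalence recalled in the first paragraph.

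The step I expect to be the crux is the isomorphism $F_\alpha(D,C)\cong F_\alpha(D,A)$: one has to check that cutting a central sequence down by an approximate unit of $D$ produces a sequence that is at once inside $C$ — this is precisely where hereditariness of $C$ is used — and still in the $\alpha$-continuous part of the sequence algebra, which is why it is convenient to take the approximate unit of $D$ to be $G$-invariant, and that the induced map is a genuine equivariant isomorphism rather than just a surjection. The remaining ingredients — the averaging constructions, the reduction to elements $x\chi_E$ sitting in a $\sigma$-unital $\alpha$-invariant hereditary subalgebra, and the isometric embedding $C\rtimes_\alpha G\hookrightarrow B\rtimes_\alpha G$ — are routine.
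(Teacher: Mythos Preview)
Your argument is correct, but it takes the opposite route from the paper. The paper's proof is essentially two lines: it invokes \autoref{thm:IdealStruct} (every ideal in $A\rtimes_\alpha G$ has the form $J\rtimes_\alpha G$) together with Theorem~3.3 in~\cite{GooLazPel}, which characterizes full strong Connes spectrum precisely by this ideal-correspondence property; hereditary saturation then drops out as a consequence (comments after Lemma~3.1 in~\cite{GooLazPel}). So the paper's logical flow is \emph{ideal structure $\Rightarrow$ full strong Connes spectrum $\Rightarrow$ hereditary saturation}, and the heavy lifting was already done in \autoref{thm:ApproxHom} and \autoref{thm:IdealStruct}.

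You instead prove hereditary saturation first, by establishing that the Rokhlin property passes to $\sigma$-unital $\alpha$-invariant hereditary subalgebras via the isomorphism $F_\alpha(D,C)\cong F_\alpha(D,A)$, and then appeal to \autoref{prop:saturated}; full strong Connes spectrum follows because hereditary saturation is equivalent to $\widetilde{\Gamma}(\alpha)=\widehat{G}$ (indeed, saturation of $\alpha$ is equivalent to $\widetilde{\mathrm{Sp}}(\alpha)=\widehat{G}$, so taking the intersection over invariant hereditary subalgebras gives the equivalence you use). Your approach has the virtue of yielding the inheritance of the Rokhlin property to invariant hereditary subalgebras as a byproduct --- a fact of independent interest that the paper defers to~\cite{Gar_CptRok} --- and it avoids the averaging machinery of \autoref{thm:ApproxHom} and \autoref{thm:IdealStruct} entirely. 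The paper's route is shorter precisely because those results are already in hand; yours is more self-contained. The reduction to a $\sigma$-unital $C$ and the cut-down argument for surjectivity of $F_\alpha(D,C)\to F_\alpha(D,A)$ are handled correctly.
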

\begin{proof}
That $\widetilde{\Gamma}(\alpha)=\widehat{G}$ follows from Theorem~3.3 in~\cite{GooLazPel}. Hereditary
saturation of actions with full strong Connes spectrum is established in the comments after Lemma~3.1
in~\cite{GooLazPel}.
\end{proof}

\section{Generalized local approximations}

We now turn to the study of preservation of certain structural properties that have proved to be relevant in the
context of Elliott's classification program. In order to provide a conceptual approach, it will be
necessary to introduce some convenient terminology.

\begin{df}\label{df:GeneralizedLocal}
Let $\mathcal{C}$ be a class of \ca s and let $A$ be a \ca.
\be
\item We say that $A$ is an \emph{(unital) approximate $\mathcal{C}$-algebra}, if $A$ is isomorphic
to a direct limit of \ca s in $\mathcal{C}$ (with unital connecting maps).
\item We say that $A$ is a \emph{(unital) local $\mathcal{C}$-algebra}, if for every finite subset
$F\subseteq A$ and for every $\ep>0$, there exist a \ca\ $B$ in $\mathcal{C}$ and a not necessarily injective
(unital) homomorphism $\varphi\colon B\to A$ such that $\mbox{dist}(a,\varphi(B))<\ep$ for all $a\in F$.
\item We say that $A$ is a \emph{generalized (unital) local $\mathcal{C}$-algebra}, if for every finite subset
$F\subseteq A$ and for every $\ep>0$, there exist a \ca\ $B$ in $\mathcal{C}$ and sequence
$(\varphi_n)_{n\in\N}$ of asymptotically multiplicative (unital) completely positive contractive maps $\varphi_n\colon B\to A$ that
$\mbox{dist}(a,\varphi_n(B))<\ep$ for all $a\in F$ and for all $n$ sufficiently large. \ee\end{df}

\begin{rem} The term `local $\mathcal{C}$-algebra' is sometimes used to
mean that the local approximations are realized by \emph{injective} homomorphisms. For
example, in \cite{Thi_IndLimProj} Thiel says that a \ca\ $A$ is `$\mathcal{C}$-like', if for
every finite subset $F\subseteq A$ and for every $\ep>0$, there exist a \ca\ $B$ in
$\mathcal{C}$ and an \emph{injective} homomorphism $\varphi\colon B\to A$ such that
$\mbox{dist}(a,\varphi(B))<\ep$ for all $a\in F$. Finally, we point out that what
we call here `approximate $\mathcal{C}$' is called `A$\mathcal{C}$' in \cite{Thi_IndLimProj}.\end{rem}

The Rokhlin property is related to the above definition in the following way. Note that
the approximating maps for $A^\alpha$ that we obtain in the proof are not necessarily injective,
even if we assume that the approximating maps for $A$ are.

\begin{prop} \label{prop:RpAndApproxClasses}
Let $\mathcal{C}$ be a class of \ca s, let $A$ be a \ca, let $G$ be a second-countable group, and let
$\alpha\colon G\to\Aut(A)$ be an action with the \Rp. If $A$ is a (unital) local $\mathcal{C}$-algebra, then
$A^\alpha$ is a generalized (unital) local $\mathcal{C}$-algebra.\end{prop}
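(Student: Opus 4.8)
The plan is to use \autoref{thm:ApproxHom} to "pull back" a local $\mathcal{C}$-approximation of $A$ to a generalized local $\mathcal{C}$-approximation of $A^\alpha$. Fix a finite subset $F\subseteq A^\alpha$ and $\ep>0$. Since $A$ is a local $\mathcal{C}$-algebra, choose $B\in\mathcal{C}$ and a homomorphism $\varphi\colon B\to A$ with $\mathrm{dist}(a,\varphi(B))<\ep/3$ for all $a\in F$; that is, pick $b_a\in B$ for $a\in F$ with $\|a-\varphi(b_a)\|<\ep/3$. Let $K=\{b_a\colon a\in F\}\subseteq B$. Because $B\in\mathcal{C}$ may fail to be separable, first replace $B$ by the separable (unital) subalgebra $B_0$ generated by $K$ if needed — but more cleanly, one simply applies \autoref{thm:ApproxHom} with the compact sets $F_1=\varphi(K)\cup\varphi(K)\varphi(K)$ (a finite, hence compact, subset of $A$) and $F_2=F\cup\{\varphi(b_a)\colon a\in F\}'$… actually $F_2$ should be a compact subset of $A^\alpha$, so take $F_2=F$. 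This produces, for each $n$, a completely positive contractive map $\psi_n\colon A\to A^\alpha$, unital if $A$ is, with $\|\psi_n(xy)-\psi_n(x)\psi_n(y)\|\to 0$ for $x,y$ in the relevant finite set and $\|\psi_n(a)-a\|\to 0$ for $a\in F$.

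The maps to exhibit are then $\Phi_n=\psi_n\circ\varphi\colon B\to A^\alpha$, which are completely positive contractive (composition of such), and unital when $A$ and $\varphi$ are. For asymptotic multiplicativity on any finite $G'\subseteq B$: enlarging the compact sets in the application of \autoref{thm:ApproxHom} to include $\varphi(G')$ and $\varphi(G')\cdot\varphi(G')$, and using that $\varphi$ is a genuine homomorphism so $\varphi(xy)=\varphi(x)\varphi(y)$, we get
\[
\|\Phi_n(xy)-\Phi_n(x)\Phi_n(y)\|=\|\psi_n(\varphi(x)\varphi(y))-\psi_n(\varphi(x))\psi_n(\varphi(y))\|\longrightarrow 0
\]
for all $x,y\in G'$. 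For the approximation property: for $a\in F$ we have $\|a-\Phi_n(b_a)\|\le\|a-\psi_n(a)\|+\|\psi_n(a-\varphi(b_a))\|\le\|a-\psi_n(a)\|+\|a-\varphi(b_a)\|$, and since $\psi_n$ is contractive the second term is $<\ep/3$, while the first tends to $0$; so for $n$ large, $\mathrm{dist}(a,\Phi_n(B))<\ep$. Hence $(\Phi_n)_n$ witnesses that $A^\alpha$ is a generalized (unital) local $\mathcal{C}$-algebra.

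One bookkeeping point deserves care: \autoref{thm:ApproxHom} requires the compact subset $F_1\subseteq A$ and $F_2\subseteq A^\alpha$ to be fixed \emph{before} choosing $\psi$, whereas here we want a single sequence $(\Phi_n)_n$ that is asymptotically multiplicative on \emph{every} finite subset of $B$. This is resolved exactly as in the "in particular" clause of \autoref{thm:ApproxHom}: when $A$ is separable one gets a full approximate homomorphism $(\psi_n)_n\colon A\to A^\alpha$ with $\psi_n(a)-a\to 0$ for all $a\in A^\alpha$ and $\psi_n(xy)-\psi_n(x)\psi_n(y)\to 0$ for all $x,y\in A$ simultaneously, and then $\Phi_n=\psi_n\circ\varphi$ works for all finite subsets at once. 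In the nonseparable case one works with the separable invariant subalgebra generated by $F\cup\varphi(B_0)$ (with $B_0\subseteq B$ any separable subalgebra containing the chosen $b_a$) and a standard diagonal/reindexing argument over an increasing exhaustion by finite subsets of that subalgebra; since \autoref{df:GeneralizedLocal}(3) only demands approximation of a prescribed finite $F$ for each choice, this is not actually needed in full generality — for a fixed $F$ and $\ep$ it suffices to fix $F_1,F_2$ as above and take $\psi=\psi_n$ for one large $n$.

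The main obstacle, and the only genuinely non-formal step, is the matching of the finite sets: one must arrange that the finite set on which $\psi_n$ is asymptotically multiplicative contains $\varphi(K)$ together with its pairwise products, so that $\Phi_n$ inherits asymptotic multiplicativity on $K$; everything else is a routine triangle-inequality estimate using contractivity of $\psi_n$ and that $\varphi$ is multiplicative on the nose. Note also, as remarked in the statement, that $\Phi_n=\psi_n\circ\varphi$ need not be injective even if $\varphi$ is, because $\psi_n$ is only a completely positive contractive map and typically far from injective.
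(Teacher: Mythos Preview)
Your approach is exactly the paper's: choose $B\in\mathcal{C}$ and $\varphi\colon B\to A$ approximating $F$, invoke \autoref{thm:ApproxHom} to obtain the approximate homomorphism $(\psi_n)_{n\in\N}$ from $A$ to $A^\alpha$, and set $\Phi_n=\psi_n\circ\varphi$. The paper's proof is three sentences long and leaves the bookkeeping you spell out (which finite sets to feed into \autoref{thm:ApproxHom}, the triangle-inequality estimate for $\dist(a,\Phi_n(B))$) implicit.

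One small correction: your closing remark that ``for a fixed $F$ and $\ep$ it suffices to fix $F_1,F_2$ as above and take $\psi=\psi_n$ for one large $n$'' does not actually produce what \autoref{df:GeneralizedLocal}(3) asks for. That definition requires a \emph{sequence} of maps $B\to A^\alpha$ that is asymptotically multiplicative (on all of $B$, in the usual sense), not a single completely positive contraction; a single non-multiplicative map viewed as a constant sequence is not asymptotically multiplicative. Your earlier resolution via the ``in particular'' clause of \autoref{thm:ApproxHom} (or, in the nonseparable case, via a diagonal argument over an exhaustion of a separable subalgebra containing $\varphi(B_0)$) is the correct one and is what the paper implicitly uses.
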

\begin{proof}
Let $F\subseteq A^\alpha$ be a finite subset, and let $\ep>0$. Find a \ca\ $B$ in $\mathcal{C}$ and
a (unital) homomorphism $\varphi\colon B\to A$ such that $\mbox{dist}(a,\varphi(B))<\frac{\ep}{2}$ for all $a\in F$.
Let $(\psi_n)_{n\in\N}$ be a sequence of (unital) completely positive contractive maps $\psi_n\colon A\to A^\alpha$ as in the
conclusion of \autoref{thm:ApproxHom}. Then $(\psi_n\circ\varphi)_{n\in\N}$ is a
sequence of (unital) completely positive contractive maps $B\to A^\alpha$ as in the definition of generalized
local $\mathcal{C}$-algebra.
\end{proof}

Let $\mathcal{C}$ be a class of \ca s. It is clear that any (unital) approximate $\mathcal{C}$-algebra is a
(unital) local
$\mathcal{C}$-algebra, and that any (unital) local $\mathcal{C}$-algebra is a generalized (unital)
local $\mathcal{C}$-algebra.

While the converses to these implications are known to fail in general,
the notions in \autoref{df:GeneralizedLocal} agree under fairly mild conditions on $\mathcal{C}$;
see \autoref{prop:ClassesAgree}.

\begin{df}\label{df:ApproxQuot}
Let $\mathcal{C}$ be a class of \ca s. We say that $\mathcal{C}$
has \emph{(unital) approximate quotients} if whenever $A\in\mathcal{C}$ (is unital) and $I$ is an ideal in $A$, the
quotient $A/I$ is a (unital) approximate $\mathcal{C}$-algebra, in the sense of \autoref{df:GeneralizedLocal}.
\end{df}

The term `approximate quotients' has been used in \cite{OsaPhi_CPRP} with a considerably stronger meaning.
Our weaker assumptions still yield an analog of Proposition~1.7 in \cite{OsaPhi_CPRP}; see \autoref{prop:ClassesAgree}.

We need to recall a definition due to Loring.
The original definition appears in \cite{Lor_Book}, while in Theorem~3.1 in \cite{EilLor_StableRels} it is proved
that weak semiprojectivity is equivalent to a condition that is more resemblant of semiprojectivity.
For the purposes of this paper, the original definition is better suited.

\begin{df}\label{df:wsp}
A \ca\ $A$ is said to be \emph{weakly semiprojective (in the unital category)} if given a \ca\ $B$ and
given a (unital) homomorphism $\psi\colon A\to B_\I$, there exists
a (unital) homomorphism $\varphi\colon A\to \ell^\I(\N,B)$ such that $\eta_B\circ\varphi=\psi$. In
other words, the following lifting problem can always be solved:
\begin{align*}
\xymatrix{
& \ell^\I(\N,B)\ar[d]^-{\eta_B}\\
A\ar[r]_-\psi\ar@{-->}[ur]^-\varphi & B_\I.}
\end{align*}
\end{df}

The proof of the following observation is left to the reader.
It states explicitly the formulation of weak semiprojectivity that will be used in our work,
specifically in \autoref{prop:ClassesAgree}.

\begin{rem}\label{rem:WSPinpractice}
Using the definition of the sequence algebra $B_\I$, it is easy to show that if $A$ is a
weakly semiprojective \ca, and if $(\psi_n)_{n\in\N}$
is an asymptotically $\ast$-multiplicative sequence of linear maps $\psi_n \colon A\to B$ from
$A$ to another \ca\ $B$,
then there exists a sequence $(\varphi_n)_{n\in\N}$ of homomorphisms $\varphi_n\colon A\to B$
such that
\[\lim_{n\to\I}\|\varphi_n(a)-\psi_n(a)\|=0\]
for all $a\in A$. If each $\psi_n$ is unital and $A$ is weakly semiprojective in the unital
category, then $\varphi_n$ can also be chosen to be unital.
\end{rem}

We proceed to give some examples of classes of \ca s that will be used in
\autoref{thm:RpPreservesWSPAQ}. We need a definition first, which appears as
Definition~11.2 in \cite{EilLorPed_StabAnticomm}.

\begin{df}\label{df:NCCW}
A \ca\ $A$ is said to be a \emph{one-dimensional noncommutative cellular complex}, or
one-dimensional NCCW-complex for short, if there exist finite dimensional \ca s $E$ and $F$,
and unital homomorphisms $\varphi,\psi\colon E\to F$, such that $A$ is isomorphic to the
pull back \ca
\[\{(a,b)\in E\oplus C([0,1],F)\colon b(0)=\varphi(a) \ \mbox{ and } \ b(1)=\psi(a)\}.\]\end{df}

It was shown in Theorem~6.2.2 of \cite{EilLorPed_StabAnticomm} that one-dimensional NCCW-com\-plex\-es are
semiprojective (in the unital category).

\begin{egs} \label{egs:wspaq}
The following are examples of classes of weakly semiprojective \ca s (in the unital category)
which have approximate quotients.
\be\item The class $\mathcal{C}$ of matrix algebras. The (unital) approximate
$\mathcal{C}$-algebras are precisely the matroid algebras (UHF-algebras).
\item The class $\mathcal{C}$ of finite dimensional \ca s. The (unital) approximate
$\mathcal{C}$-algebras are precisely the (unital) AF-algebras.
\item The class $\mathcal{C}$ of interval algebras, this is, algebras of the form $C([0,1])\otimes F$,
where $F$ is a finite dimensional \ca. The (unital) approximate
$\mathcal{C}$-algebras are precisely the (unital) AI-algebras.
\item The class $\mathcal{C}$ of circle algebras, this is, algebras of the form $C(\T)\otimes F$,
where $F$ is a finite dimensional \ca. The (unital) approximate
$\mathcal{C}$-algebras are precisely the (unital) A$\T$-algebras.
\item The class $\mathcal{C}$ of one-dimensional NCCW-complexes. We point out that
certain approximate $\mathcal{C}$-algebras have been classified, in terms of a variant of
their Cuntz semigroup, by Robert in \cite{Rob_classif}.
\ee\end{egs}

The following result is well-known for several particular classes.

\begin{prop}\label{prop:ClassesAgree}
Let $\mathcal{C}$ be a class of \ca s which has (unital) approximate quotients
(see \autoref{df:ApproxQuot}).
Assume that the \ca s in $\mathcal{C}$ are weakly semiprojective (in the unital category).
For a separable (unital) \ca\ $A$, the following are equivalent:
\be\item $A$ is an (unital) approximate $\mathcal{C}$-algebra;
\item $A$ is a (unital) local $\mathcal{C}$-algebra;
\item $A$ is a generalized (unital) local $\mathcal{C}$-algebra.\ee \end{prop}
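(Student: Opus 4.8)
The implications $(1)\Rightarrow(2)\Rightarrow(3)$ hold for every class $\mathcal{C}$ and were recorded before the statement: a direct limit is, a fortiori, locally approximated by its building blocks, and a genuine homomorphism is a constant asymptotically multiplicative sequence. So the content is $(3)\Rightarrow(1)$. The plan is to realize the separable \ca\ $A$ as the limit of an honest inductive system $C_1\xrightarrow{\rho_1}C_2\xrightarrow{\rho_2}\cdots$ of approximate $\mathcal{C}$-algebras, equipped with \emph{injective} homomorphisms $\mu_n\colon C_n\to A$ satisfying $\mu_{n+1}\circ\rho_n=\mu_n$ and with $\overline{\bigcup_n\mu_n(C_n)}=A$. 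Granting such a system, injectivity of $\mu_{n+1}$ forces $\mu_n(C_n)\subseteq\mu_{n+1}(C_{n+1})$, the induced map $\varinjlim C_n\to A$ is injective (because each $\mu_n$ is) and has dense range (because $\bigcup_n\mu_n(C_n)$ is dense), hence is an isomorphism; since a countable inductive limit of approximate $\mathcal{C}$-algebras is again an approximate $\mathcal{C}$-algebra --- a standard diagonalization, using the weak semiprojectivity of the members of $\mathcal{C}$ --- this exhibits $A$ as an approximate $\mathcal{C}$-algebra. In the unital case one keeps every map unital throughout, which \autoref{rem:WSPinpractice} permits.

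I would build the system recursively. Fix an increasing sequence $F_1\subseteq F_2\subseteq\cdots$ of finite subsets of $A$ with dense union and a summable sequence $\ep_n>0$. At the $n$-th stage, apply \autoref{df:GeneralizedLocal}(3) to $F_n$ together with a suitable finite subset of the previously constructed $\mu_{n-1}(C_{n-1})$ (chosen to control the connecting map $\rho_{n-1}$), with a small tolerance: this produces $B_n\in\mathcal{C}$ and a sequence of asymptotically multiplicative completely positive contractive maps $B_n\to A$ whose images approximately contain the prescribed finite set. Since $B_n$ is weakly semiprojective, \autoref{rem:WSPinpractice} replaces this sequence by a sequence of honest homomorphisms $B_n\to A$ converging to it pointwise; fixing a large enough index gives a single homomorphism $B_n\to A$ still carrying the required approximation property. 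Passing to the quotient of $B_n$ by its kernel --- an approximate $\mathcal{C}$-algebra by \autoref{df:ApproxQuot} --- makes this map injective; call it $\mu_n\colon C_n\to A$. Finally, using weak semiprojectivity of the building blocks of $C_{n-1}$ together with a correction turning approximate commutativity into exact commutativity, one produces $\rho_{n-1}\colon C_{n-1}\to C_n$ with $\mu_n\circ\rho_{n-1}=\mu_{n-1}$. Density of $\bigcup_n\mu_n(C_n)$ follows from that of $\bigcup_nF_n$, completing the construction.

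The step I expect to be the main obstacle is the passage from the asymptotically multiplicative completely positive contractive maps to honest homomorphisms \emph{while retaining the approximation property}: the lift produced by weak semiprojectivity converges only pointwise, so a priori the honest homomorphism could have a strictly smaller image than the maps one started from, and one must check that each element of $F_n$ still lies within $\ep_n$ of its image. The remedy is to choose, for each relevant element of $A$, a preimage in $B_n$ of uniformly bounded norm and to propagate this bound through the pointwise convergence. The closely related point --- arranging $\mu_n\circ\rho_{n-1}=\mu_{n-1}$ exactly, rather than only up to $\ep_{n-1}$, which is what makes the images genuinely increasing and the limit map injective --- is the other place where the weak semiprojectivity of the building blocks must be invoked with care; everything else is bookkeeping together with the diagonalization mentioned at the outset.
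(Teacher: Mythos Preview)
Your plan contains a genuine gap at the step where you claim to arrange $\mu_n\circ\rho_{n-1}=\mu_{n-1}$ \emph{exactly}. Weak semiprojectivity of the building blocks of $C_{n-1}$ does not deliver this. What weak semiprojectivity gives you (via \autoref{rem:WSPinpractice}) is the ability to perturb an asymptotically multiplicative sequence into a sequence of honest homomorphisms \emph{into the same target algebra}; it does not allow you to push an honest homomorphism $\mu_{n-1}\colon C_{n-1}\to A$ into a prescribed subalgebra $\mu_n(C_n)\subseteq A$ that is only approximately close to containing its image. Exact containment $\mu_{n-1}(C_{n-1})\subseteq\mu_n(C_n)$ is what you would need, and nothing in your construction forces it. (Perturbation into nearby subalgebras is a feature of semiprojectivity, not weak semiprojectivity, and even then only under additional hypotheses.)

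The standard way around this --- and the one the paper takes --- is to abandon exact commutativity altogether and use a one-sided approximate intertwining: one constructs maps $\rho_{n-1}\colon C_{n-1}\to C_n$ with $\|\mu_n\circ\rho_{n-1}(x)-\mu_{n-1}(x)\|<\ep_{n-1}$ on suitable finite sets, with $\sum\ep_n<\infty$, and then proves directly that the resulting inductive limit maps surjectively (and, with care, injectively) onto $A$. The paper does not spell this out; it separates the argument into (3)$\Rightarrow$(2) via weak semiprojectivity (essentially your first use of \autoref{rem:WSPinpractice}) and then (2)$\Rightarrow$(1) by observing that approximate quotients make a local $\mathcal{C}$-algebra A$\mathcal{C}$-like in Thiel's sense, after which it cites Theorem~3.9 of \cite{Thi_IndLimProj}, whose proof is precisely a one-sided approximate intertwining. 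If you want to prove the implication from scratch, your outline is on the right track, but the inductive system you build should come with \emph{approximately} commuting triangles and summable errors, and the conclusion should be obtained from an approximate-intertwining lemma rather than from the exact nesting $\mu_{n-1}(C_{n-1})\subseteq\mu_n(C_n)$.
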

\begin{proof}
The implications (1) $\Rightarrow$ (2) $\Rightarrow$ (3) are true in full generality. Weak semiprojectivity
of the algebras in $\mathcal{C}$ implies that any generalized local approximation by \ca s in $\mathcal{C}$
can be perturbed to a genuine local approximation by \ca s in $\mathcal{C}$ (see \autoref{rem:WSPinpractice}),
showing (3) $\Rightarrow$ (2).

For the implication (2) $\Rightarrow$ (1), note that since $\mathcal{C}$ has approximate quotients,
every a local $\mathcal{C}$-algebra is A$\mathcal{C}$-like, in the sense of Definition~3.2 in \cite{Thi_IndLimProj}
(see also Paragraph~3.6 there). It then follows from Theorem~3.9 in \cite{Thi_IndLimProj} that $A$ is an approximate
$\mathcal{C}$-algebra.

For the unital case, one uses \autoref{rem:WSPinpractice} to show that (3) $\Rightarrow$ (2) when units
are considered. Moreover, for (2) $\Rightarrow$ (1),
one checks that in the proof of Theorem~3.9 in \cite{Thi_IndLimProj}, if one assumes
that the building blocks are weakly semiprojective in the unital category, then the conclusion is that
a unital A$\mathcal{C}$-like algebra is a unital A$\mathcal{C}$-algebra. With the notation and terminology
of the proof of Theorem~3.9 in~\cite{Thi_IndLimProj}, suppose that $A$ is a unital $A\mathcal{C}$-like
algebra, and suppose that $\varphi\colon C\to A$ is a unital homomorphism, with $C\in\mathcal{C}$.
Since $C$ is assumed to be weakly semiprojective in the unital
category, the morphism $\alpha\colon C\to B$ can be chosen to be unital. For the same reason, one can
arrange that the morphism $\widetilde{\alpha}\colon C\to C_{k_1}$ be unital (possible by changing the
choice of $k_1$). Now, since the connecting maps $\gamma_k$ are also assumed to be unital, it is easily
seen that the one-sided approximate intertwining constructed has unital connecting maps. Finally, when
applying Proposition~3.5 in~\cite{Thi_IndLimProj}, if the algebras $A_i$, with $i\in I$, are weakly
semiprojective in the unital category, then the morphisms $\psi_k\colon A_{i(k)}\to A_{i(k+1)}$ can be
chosen to be unital as well. We leave the details to the reader.
\end{proof}

The following is the main application of our approximations results.

\begin{thm}\label{thm:RpPreservesWSPAQ}
Let $\mathcal{C}$ be a class of separable weakly semiprojective \ca s (in the unital category), and
assume that $\mathcal{C}$
has (unital) approximate quotients. Let $A$ be a (unital) local $\mathcal{C}$-algebra,
let $G$ be a second-countable group, and let $\alpha\colon G\to\Aut(A)$ be an action with the \Rp.
Then $A^\alpha$ is a (unital) approximate $\mathcal{C}$-algebra. \end{thm}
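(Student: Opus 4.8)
The plan is to combine the two facts already assembled in the paper: \autoref{prop:RpAndApproxClasses}, which turns a local $\mathcal{C}$-structure on $A$ into a \emph{generalized} local $\mathcal{C}$-structure on $A^\alpha$ via the completely positive contractive approximations $\psi_n\colon A\to A^\alpha$ of \autoref{thm:ApproxHom}; and \autoref{prop:ClassesAgree}, which, under the standing hypotheses on $\mathcal{C}$ (separable building blocks, weakly semiprojective, with approximate quotients), collapses the three notions ``approximate'', ``local'' and ``generalized local'' $\mathcal{C}$-algebra into one, at least for separable algebras.

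Concretely, I would proceed as follows. First, observe that $A^\alpha$ is separable: since $A$ is a (unital) local $\mathcal{C}$-algebra with $\mathcal{C}$ consisting of separable \ca s, $A$ is separable (it is a local approximation by separable pieces, hence has a countable dense subset), and $A^\alpha$, being a subalgebra of $A$, is separable as well; moreover if $A$ is unital then $1_A\in A^\alpha$, so $A^\alpha$ is unital. Second, apply \autoref{prop:RpAndApproxClasses} to conclude that $A^\alpha$ is a generalized (unital) local $\mathcal{C}$-algebra. Third, apply the equivalence (3) $\Rightarrow$ (1) in \autoref{prop:ClassesAgree}, whose hypotheses are exactly ``$\mathcal{C}$ has (unital) approximate quotients'' and ``the \ca s in $\mathcal{C}$ are weakly semiprojective (in the unital category)'', both of which are assumed here. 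This yields that $A^\alpha$ is a (unital) approximate $\mathcal{C}$-algebra, which is the assertion.

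There is essentially no obstacle: the work has all been front-loaded into \autoref{thm:ApproxHom} (the averaging machinery) and into the citation of Thiel's results inside \autoref{prop:ClassesAgree} (the passage from local to inductive-limit form). The only point requiring a word of care is the unital bookkeeping — one must check that the completely positive contractive maps produced by \autoref{thm:ApproxHom} can be taken unital when $A$ is unital, so that \autoref{prop:RpAndApproxClasses} really does land in the \emph{unital} generalized local class, and then that the unital version of \autoref{prop:ClassesAgree} applies; but both of these are explicitly recorded in the statements being invoked. I would therefore keep the proof to a few lines, simply chaining \autoref{prop:RpAndApproxClasses} into \autoref{prop:ClassesAgree} after noting separability and (in the unital case) unitality of $A^\alpha$, and leaving the crossed-product assertion, if one wants it, to \autoref{prop:saturated} together with the Morita-invariance remark in \autoref{rem:CPisFixPtAlg}.
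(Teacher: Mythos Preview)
Your proposal is correct and matches the paper's proof exactly, which is literally the one line: ``This is an immediate consequence of \autoref{prop:RpAndApproxClasses} and \autoref{prop:ClassesAgree}.'' One small caution: your argument that $A$ is separable because it is a local $\mathcal{C}$-algebra with separable $\mathcal{C}$ is not valid in general (for instance, $\ell^\infty$ is locally finite-dimensional but nonseparable), so the separability of $A^\alpha$ needed for \autoref{prop:ClassesAgree} really comes from an ambient separability/$\sigma$-unitality hypothesis on $A$ --- a point the paper also leaves implicit.
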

\begin{proof} This is an immediate consequence of \autoref{prop:RpAndApproxClasses} and
\autoref{prop:ClassesAgree}.\end{proof}

An alternative proof of part (2) of the corollary below is given in \autoref{cor:AFpreserved}.

\begin{cor}\label{cor:ParticularClasses}
Let $A$ be a $\sigma$-unital \ca, let $G$ be a second-countable compact group, and let
$\alpha\colon G\to\Aut(A)$ be an action with the \Rp.
\be
\item If $A$ is a matroid algebra (UHF), then $A^\alpha$ is a matroid algebra (UHF) and $A\rtimes_\alpha G$ is
a matroid algebra. (If $G$ is finite, then $A\rtimes_\alpha G$ is also a UHF-algebra.)
\item If $A$ is an AF-algebra, then so are $A^\alpha$ and $A\rtimes_\alpha G$.
\item If $A$ is an AI-algebra, then so are $A^\alpha$ and $A\rtimes_\alpha G$.
\item If $A$ is an A$\T$-algebra, then so are $A^\alpha$ and $A\rtimes_\alpha G$.
\item If $A$ is a direct limit of one-dimensional NCCW-complexes, then so are
$A^\alpha$ and $A\rtimes_\alpha G$.
\ee
\end{cor}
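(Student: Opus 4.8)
The plan is to recognize each of the five hypotheses as the statement that $A$ is an approximate $\mathcal{C}$-algebra — in the unital category for the UHF part of (1) — for the appropriate class $\mathcal{C}$ among those isolated in \autoref{egs:wspaq}: matrix algebras for (1), finite dimensional \cas\ for (2), interval algebras for (3), circle algebras for (4), and one-dimensional NCCW-complexes for (5). First I would record that, by \autoref{egs:wspaq}, each of these classes consists of separable weakly semiprojective \cas\ (in the unital category) and has (unital) approximate quotients, and that an approximate $\mathcal{C}$-algebra is a fortiori a (unital) local $\mathcal{C}$-algebra. Then \autoref{thm:RpPreservesWSPAQ} applies directly and yields that $A^\alpha$ is again a (unital) approximate $\mathcal{C}$-algebra, i.e. $A^\alpha$ has the same property as $A$ in each of (1)--(5). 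In the UHF case one observes in addition that $1_A\in A^\alpha$, so that $A^\alpha$ is unital and is therefore a UHF-algebra rather than merely a matroid algebra.

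Next I would pass to the crossed product using \autoref{prop:saturated}: since $\alpha$ is saturated, $A^\alpha$ and $A\rtimes_\alpha G$ are Morita equivalent, and in fact stably isomorphic because $A$ is separable (matroid, \AFas, \AIas, \ATas, and direct limits of one-dimensional NCCW-complexes are all separable by definition). It then remains to note that each of the target classes is closed under stable isomorphism, equivalently under tensoring with $\K$: this is routine, since $\K$ is itself an \AFa\ and, in each case, the relevant family of building blocks stays in the class up to inductive limits after tensoring with matrix algebras. This gives the first assertion in each of (1)--(5). Finally, for the parenthetical claim in (1) when $G$ is finite, $1_A$ is a unit for $A\rtimes_\alpha G$, so $A\rtimes_\alpha G$ is a unital matroid algebra and hence a UHF-algebra; an alternative, direct route to part (2) is also available via \autoref{cor:AFpreserved}.

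Since all the substantive work has been packaged into \autoref{thm:ApproxHom}, \autoref{prop:RpAndApproxClasses}, \autoref{prop:ClassesAgree} and \autoref{prop:saturated}, I do not expect a real obstacle here; the proof is essentially bookkeeping. The only points that demand any attention are the unital housekeeping in the UHF case (checking $1_A\in A^\alpha$ and that a unital matroid algebra is UHF) and the verification that the five target classes are closed under tensoring with $\K$ — both of which are standard — together with a check that the separability hypotheses implicit in \autoref{thm:RpPreservesWSPAQ} and in the "stably isomorphic" clause of \autoref{prop:saturated} are satisfied.
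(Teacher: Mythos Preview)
Your proposal is correct and follows essentially the same approach as the paper: invoke \autoref{egs:wspaq} to verify the hypotheses of \autoref{thm:RpPreservesWSPAQ} for each of the five classes, and conclude. The paper's proof is a single sentence and leaves the passage from $A^\alpha$ to $A\rtimes_\alpha G$ implicit (presumably via \autoref{prop:saturated} or \autoref{rem:CPisFixPtAlg}), whereas you spell this out explicitly along with the unital bookkeeping in the UHF case; this extra detail is harmless and arguably an improvement.
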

\begin{proof}
Since the classes in \autoref{egs:wspaq} have approximate quotients and contain only weakly semiprojective
\ca s, the claims follow from \autoref{thm:RpPreservesWSPAQ}.
\end{proof}

\autoref{thm:RpPreservesWSPAQ} allows for far more flexibility than Theorem~3.5 in \cite{OsaPhi_CPRP}, since
we do not assume our classes of \ca s to be closed under direct sums or by taking corners, nor do we
assume that our algebras are semiprojective. In particular, the class $\mathcal{C}$ of weakly semiprojective
purely infinite, simple algebras satisfies the assumptions of \autoref{thm:RpPreservesWSPAQ}, but appears not to
fit into the framework of flexible classes discussed in \cite{OsaPhi_CPRP}.

Recall that a \ca\ is said to be a \emph{Kirchberg algebra} if it is purely infinite, simple, separable
and nuclear.

The following lemma is probably standard, but we include its proof here for the sake of completeness.

\begin{lma}\label{lma:KirLimWkSj}
Let $A$ be a Kirchberg algebra satisfying the Universal Coefficient Theorem. Then $A$ is isomorphic
to a direct limit of weakly semiprojective Kirchberg algebras satisfying the Universal Coefficient Theorem.
\end{lma}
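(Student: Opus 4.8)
The plan is to combine the Kirchberg--Phillips classification theorem with the known fact that unital Kirchberg algebras in the UCT class with finitely generated $K$-theory are weakly semiprojective, by realizing $A$ as an inductive limit of a system assembled from the finitely generated pieces of $K_*(A)$.

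First I would reduce to the case that $A$ is unital. A Kirchberg algebra is either unital or stable; in the latter case $A$ contains a nonzero projection $p$, since it is purely infinite, and $p$ is full by simplicity, so L.~G.~Brown's stable isomorphism theorem together with $A\cong A\otimes\K$ yields $A\cong pAp\otimes\K$. Here $B:=pAp$ is a unital \ca, again a Kirchberg algebra (hereditary subalgebras inherit pure infiniteness, simplicity, separability, and nuclearity), and it satisfies the UCT because it is stably isomorphic to $A$. Since tensoring with $M_n$ preserves each of ``Kirchberg algebra'', ``satisfies the UCT'', and ``weakly semiprojective'', and since $B\otimes\K\cong\varinjlim_n(B\otimes M_n)$, it is enough to write $B$ as a direct limit of weakly semiprojective Kirchberg algebras satisfying the UCT and then pass to the cofinal diagonal subsystem of the resulting double sequence. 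So from now on I assume $A$ is unital.

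Next I would write $K_0(A)=\varinjlim(G_n^{(0)},\iota_n^{(0)})$ and $K_1(A)=\varinjlim(G_n^{(1)},\iota_n^{(1)})$ as direct limits of finitely generated abelian groups, arranged so that $[1_A]$ is the image of a compatible family $g_n\in G_n^{(0)}$. Using the computation of the range of the invariant for unital Kirchberg algebras satisfying the UCT, I would choose for each $n$ a unital Kirchberg algebra $B_n$ in the UCT class with $(K_0(B_n),[1_{B_n}],K_1(B_n))\cong(G_n^{(0)},g_n,G_n^{(1)})$, and then invoke the existence part of the Kirchberg--Phillips theorem --- via the UCT, every pair of group homomorphisms $K_i(B_n)\to K_i(B_{n+1})$ compatible with the unit classes is induced by a $KK$-element, hence by a unital $\ast$-homomorphism --- to realize the connecting maps of the two $K$-theory systems by unital homomorphisms $\beta_n\colon B_n\to B_{n+1}$. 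Being nonzero homomorphisms out of simple \cas, the $\beta_n$ are automatically injective, so $B_\infty:=\varinjlim(B_n,\beta_n)$ is again a unital Kirchberg algebra satisfying the UCT: separability, nuclearity, simplicity (via the ideal-intersection argument for unital injective connecting maps), proper infiniteness of the unit, and membership in the UCT class all pass to such limits. By continuity of $K$-theory, $(K_0(B_\infty),[1_{B_\infty}],K_1(B_\infty))\cong(K_0(A),[1_A],K_1(A))$, so the classification theorem gives $B_\infty\cong A$.

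Finally, each $B_n$ is a unital Kirchberg algebra satisfying the UCT with finitely generated $K$-theory, and such algebras are weakly semiprojective; this is the one external ingredient of the argument. I expect the realization step to demand the most care: one has to pick the building blocks, their unit classes, and the connecting maps coherently enough that the inductive limit is genuinely covered by the classification theorem and is isomorphic to $A$, and one has to check that every limit involved remains within the class of Kirchberg algebras satisfying the UCT. Once weak semiprojectivity of Kirchberg algebras with finitely generated $K$-theory is granted, no further analytic work is required.
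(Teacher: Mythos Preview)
Your proposal is correct and follows essentially the same strategy as the paper: decompose $K_*(A)$ into finitely generated pieces, realize each piece by a Kirchberg algebra in the UCT class via the Kirchberg--Phillips classification/existence theorems, and then invoke the Spielberg/Lin result that UCT Kirchberg algebras with finitely generated $K$-theory are weakly semiprojective. The only notable difference is the direction of the unitality reduction: the paper reduces to the \emph{non-unital} (stable) case---where there is no unit class to track, so one can cite Phillips' existence theorems directly without worrying about pointed $K_0$---whereas you reduce to the \emph{unital} case and carry the class $[1_A]$ through the inductive system; both reductions work, and yours is perhaps the more natural choice given that the weak semiprojectivity results in the literature are typically stated for unital algebras. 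One minor quibble: in verifying that $B_\infty$ is Kirchberg, ``proper infiniteness of the unit'' alone does not yield pure infiniteness---you should instead note that pure infiniteness (in the simple case) passes to inductive limits with injective connecting maps, which is standard.
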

\begin{proof}
Since every non-unital Kirchberg algebra is the stabilization of a unital Kirchberg algebra, by
Proposition~3.11 in~\cite{Dad_CtsFdsFinDim}
it is enough to prove the statement when $A$ is non-unital. For $j=0,1$, set $G_j=K_j(A)$. Write
$G_j$ as a direct limit $G_j\cong \varinjlim (G_j^{(n)},\gamma_j^{(n)})$ of finitely generated abelian
groups $G^{(n)}_j$, with connecting maps
\[\gamma_j^{(n)}\colon G_j^{(n)}\to G_j^{(n+1)}.\]
For $j=0,1$, use Theorem~4.2.5 in~\cite{Phi_Classif} to find, for $n\in\N$, Kirchberg algebras
$A_n$ satisfying the Universal Coefficient Theorem with $K_j(A_n)\cong G_j^{(n)}$, and homomorphisms
\[\varphi_n\colon A_n\to A_{n+1}\]
such that $K_j(\varphi_n)$ is identified with $\gamma_j^{(n)}$ under the isomorphism $K_j(A_n)\cong G_j^{(n)}$.

The direct limit $\varinjlim (A_n, \varphi_n)$ is isomorphic to $A$ by Theorem~4.2.4 in~\cite{Phi_Classif}.
On the other hand, each of the algebras $A_n$ is weakly semiprojective by Theorem~2.2 in~\cite{Spi_WSjPI}
(see also Corollary~7.7 in~\cite{Lin_WSjPI}), so the proof is complete.
\end{proof}

\begin{thm}\label{thm:UCT}
Let $A$ be a separable, simple, nuclear \ca, let $G$ be a second-countable compact group, and let
$\alpha\colon G\to\Aut(A)$ be an action with the \Rp. If $A$ satisfies the Universal Coefficient Theorem,
then so do $A^\alpha$ and $A\rtimes_\alpha G$.
\end{thm}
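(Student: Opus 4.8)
The plan is to reduce, as in the rest of the paper, to a statement about the fixed point algebra, and then to exhibit $A^\alpha$ as a retract of $A$ in Kasparov's category $\mathrm{KK}$, using the approximate homomorphism provided by \autoref{thm:ApproxHom}. First I would note that it suffices to prove that $A^\alpha$ satisfies the Universal Coefficient Theorem. Indeed, $A^\alpha$ is the range of the faithful conditional expectation $a\mapsto\int_G\alpha_g(a)\,d\mu(g)$ on the separable nuclear \ca\ $A$, so $\id_{A^\alpha}$ factors through $A$ via completely positive contractive maps and hence $A^\alpha$ is itself separable and nuclear; moreover, by \autoref{prop:saturated} it is Morita equivalent to $A\rtimes_\alpha G$, which is separable and nuclear as well, being a crossed product of a nuclear \ca\ by an amenable group. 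Since the Universal Coefficient Theorem is invariant under stable isomorphism, and separable Morita equivalent \cas\ are stably isomorphic, the assertion for $A^\alpha$ is equivalent to the assertion for $A\rtimes_\alpha G$.

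For the core of the argument, let $\iota\colon A^\alpha\to A$ be the inclusion, and let $(\psi_n)_{n\in\N}$ be the approximate homomorphism of \autoref{thm:ApproxHom}: a sequence of completely positive contractive maps $\psi_n\colon A\to A^\alpha$ that is asymptotically multiplicative and satisfies $\lim_{n\to\I}\|\psi_n(a)-a\|=0$ for all $a\in A^\alpha$. Since $A$ is separable and nuclear, such a sequence determines a class $[\psi]\in KK(A,A^\alpha)$ (for instance by interpolating to a completely positive asymptotic morphism and using that the natural map $KK(A,-)\to E(A,-)$ is an isomorphism for nuclear $A$; the nuclearity of the target $A^\alpha$ is irrelevant here). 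Composing with the $*$-homomorphism $\iota$, the sequence $(\psi_n\circ\iota)_{n\in\N}$ is asymptotically equal to the constant sequence $(\id_{A^\alpha})_{n\in\N}$, so that $[\iota]\otimes_A[\psi]=[\id_{A^\alpha}]=1_{A^\alpha}$ in $KK(A^\alpha,A^\alpha)$. Thus $A^\alpha$ is a retract of $A$ in $\mathrm{KK}$ (we do not, and cannot, claim that the other composite is the identity).

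Finally, the class of separable nuclear \cas\ satisfying the Universal Coefficient Theorem is closed under $KK$-equivalence and under passage to retracts in $\mathrm{KK}$ — it is a thick (indeed localizing) subcategory of Kasparov's category — so, as $A$ lies in it, so does $A^\alpha$. Together with the first paragraph, this proves that $A^\alpha$ and $A\rtimes_\alpha G$ satisfy the Universal Coefficient Theorem. The main obstacle is the second step: passing from the approximate homomorphism $(\psi_n)$ to an honest element of $KK(A,A^\alpha)$ which behaves correctly under composition with $[\iota]$. This is exactly where nuclearity of $A$ is essential, and where one must be somewhat careful about whether to argue inside $KK$-theory or $E$-theory; everything else is either already contained in \autoref{prop:saturated} or is formal homological algebra of triangulated categories (thick subcategories are closed under direct summands).
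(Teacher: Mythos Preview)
Your argument is correct and takes a genuinely different route from the paper's proof. The paper first tensors with $\mathcal{O}_\infty$ to reduce to the case where $A$ is a Kirchberg algebra, then invokes \autoref{lma:KirLimWkSj} to write $A$ as an inductive limit of weakly semiprojective UCT Kirchberg algebras, and finally applies the local-approximation machinery of \autoref{thm:RpPreservesWSPAQ} to conclude that $A^\alpha$ is again such a limit and hence satisfies the UCT. By contrast, you bypass Section~3 entirely and argue directly at the level of bivariant $K$-theory: the approximate homomorphism of \autoref{thm:ApproxHom}, interpolated to a continuous asymptotic morphism and transported from $E$-theory to $KK$-theory via nuclearity of $A$, exhibits $A^\alpha$ as a retract of $A$ in the Kasparov category, and the bootstrap class is thick. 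Your approach is more conceptual and in fact does not use the hypothesis that $A$ is simple (the paper needs simplicity to land in the class of Kirchberg algebras after tensoring with $\mathcal{O}_\infty$); it is essentially the line later developed systematically by Barlak and Szab\'o in~\cite{BarSza_preparation} (cf.\ \autoref{rem:CommDiagrApprHom}). The paper's approach, on the other hand, stays within the elementary framework of local approximations and weak semiprojectivity already set up for the other permanence results, and avoids invoking the $KK$/$E$-theory comparison theorem. The one point in your write-up that deserves a sentence of justification is that precomposition of an asymptotic morphism with a genuine $\ast$-homomorphism realizes the Kasparov product, so that the asymptotic equality $\psi_n\circ\iota\to\id_{A^\alpha}$ really does yield $[\iota]\otimes_A[\psi]=1_{A^\alpha}$; this is standard, but it is exactly the ``behaves correctly under composition'' issue you flag.
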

\begin{proof}
We claim that it is enough to prove the statement when $A$ is a Kirchberg algebra. Indeed, a \ca\ $B$
satisfies the Universal Coefficient Theorem if and only if $B\otimes\OI$ does, since $\OI$ is $KK$-equivalent
to $\C$. On the other hand, $\alpha\otimes\id_{\OI}$ has the \Rp, and
\[(A\otimes\OI)^{\alpha\otimes\id_{\OI}}=A^\alpha\otimes\OI.\]

Suppose then that $A$ is a Kirchberg algebra. Denote by $\mathcal{C}$ the class of all unital weakly semiprojective
Kirchberg algebras satisfying the Universal Coefficient Theorem. Note that $\mathcal{C}$ has approximate quotients.
By \autoref{lma:KirLimWkSj},
$A$ is a unital approximate $\mathcal{C}$-algebra. By \autoref{thm:RpPreservesWSPAQ}, $A^\alpha$ is
also a unital approximate $\mathcal{C}$-algebra. Since the Universal Coefficient Theorem passes to direct limits,
we conclude that $A^\alpha$ satisfies it. Since $A\rtimes_\alpha G$ is Morita equivalent to $A^\alpha$, the same
holds for the crossed product.
\end{proof}

\section{Further structure results}

We now turn to preservation of classes of \ca s that are not necessarily defined in terms of
an approximation by weakly semiprojective \ca s. The classes we study can all be dealt with
using \autoref{thm:ApproxHom}.

The following is Definition~1.3 in \cite{TomWin_SSA}.

\begin{df}
A unital, separable \ca\ $\D$ is said to be \emph{strongly self-absorbing}, if it is
infinite dimensional and the map $\D\to \D\otimes_{\mathrm{min}} \D$, given by
$d\mapsto d\otimes 1$ for $d\in\mathcal{D}$, is approximately unitarily equivalent to an isomorphism.
\end{df}

It is a consequence of a result of Effros and Rosenberg that strongly self-absorbing \ca s
are nuclear, so that the choice of the tensor product in the definition above is irrelevant.
The only known examples of strongly self-absorbing \ca s are the Jiang-Su algebra $\mathcal{Z}$,
the Cuntz algebras $\Ot$ and $\mathcal{O}_\I$, UHF-algebras of infinite type, and tensor
products of $\OI$ by such UHF-algebras. It has been conjectured that these are the only
examples of strongly self-absorbing \ca s. See \cite{TomWin_SSA} for the proofs of these and other
results concerning strongly self-absorbing \ca s.

The following is a useful criterion to determine when a separable \ca\ absorbs a
strongly self-absorbing \ca\ tensorially. The proof is a straightforward combination of
Theorem~2.2 in \cite{TomWin_SSA} and the Choi-Effros lifting theorem,
and we shall omit it. (See also Proposition~4.1 in~\cite{HRW_CXalg}.)

\begin{thm}\label{thm:CriterionSSA}
Let $A$ be a separable \ca, and let $\mathcal{D}$ be a strongly self-absorbing \ca.
Then $A$ is $\mathcal{D}$-stable if and only if for every $\ep>0$, for every finite subset
$F\subseteq A$, and every finite subset $E\subseteq \mathcal{D}$, there exists a
completely positive map $\varphi\colon \mathcal{D}\to A$ such that
\be\item $\|a\varphi(d)-\varphi(d)a\|<\ep$ for all $a\in F$ and for all $d\in E$;
\item $\|\varphi(de)a-\varphi(d)\varphi(e)a\|<\ep$ for every $d,e\in E$ and every $a\in F$;
\item $\|\varphi(1)a-a\|<\ep$ for all $a\in F$.\ee\end{thm}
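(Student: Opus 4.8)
The plan is to reduce the statement entirely to the Toms--Winter characterization of $\mathcal{D}$-stability via central sequences, used as a black box. Concretely, for a separable \ca\ $A$ and a strongly self-absorbing \ca\ $\mathcal{D}$, the algebra $A$ is $\mathcal{D}$-stable if and only if there is a unital $\ast$-homomorphism $\mathcal{D}\to F(A)$, where $F(A)=A_\I\cap A'/\Ann(A,A_\I)$ as in \autoref{df:CtralSeq}; this is Theorem~2.2 in~\cite{TomWin_SSA} when $A$ is unital (so that $\Ann(A,A_\I)=0$ and $F(A)=A_\I\cap A'$), and its extension to the non-unital case is part of Kirchberg's central-sequence formalism (compare Proposition~4.1 in~\cite{HRW_CXalg}). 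Granting this, all that remains is to pass back and forth between a unital homomorphism $\mathcal{D}\to F(A)$ and a sequence of completely positive contractive maps $\mathcal{D}\to A$ which is asymptotically central, asymptotically multiplicative modulo $A$, and whose value at $1$ is an asymptotic unit for $A$, in the sense of~(1)--(3); the bridge in both directions is the Choi--Effros lifting theorem, available because $\mathcal{D}$ is nuclear.

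For necessity, I would assume $A$ is $\mathcal{D}$-stable and fix a unital $\ast$-homomorphism $\ph\colon\mathcal{D}\to F(A)$, then lift it in two steps: first along $\kappa_A\colon A_\I\cap A'\to F(A)$ to a completely positive contractive map $\widetilde\ph\colon\mathcal{D}\to A_\I\cap A'$, and then along $\eta_A\colon\ell^\I(\N,A)\to A_\I$ to a completely positive contractive map into $\ell^\I(\N,A)$, both lifts existing by Choi--Effros. Writing $(\ph_n)_{n\in\N}$ for the coordinate maps of the second lift, the inclusion $\widetilde\ph(\mathcal{D})\subseteq A'$ forces $\lim_n\|a\ph_n(d)-\ph_n(d)a\|=0$; multiplicativity of $\kappa_A\circ\widetilde\ph$ forces $\widetilde\ph(de)-\widetilde\ph(d)\widetilde\ph(e)\in\Ann(A,A_\I)$, hence $\lim_n\|(\ph_n(de)-\ph_n(d)\ph_n(e))a\|=0$; and unitality of $\ph$ forces $\widetilde\ph(1)-1_{A_\I}\in\Ann(A,A_\I)$, hence $\lim_n\|\ph_n(1)a-a\|=0$, for all $a\in A$ and $d,e\in\mathcal{D}$. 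Given $\ep>0$ and finite sets $F\subseteq A$, $E\subseteq\mathcal{D}$, the map $\varphi=\ph_n$ for $n$ large then satisfies (1)--(3).

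For the converse, I would fix increasing finite subsets $F_1\subseteq F_2\subseteq\cdots\subseteq A$ and $E_1\subseteq E_2\subseteq\cdots\subseteq\mathcal{D}$ with dense unions, a sequence $\ep_n\downarrow 0$, and (after the harmless normalization to contractive maps, using~(3)) choose completely positive contractive $\ph_n\colon\mathcal{D}\to A$ satisfying (1)--(3) for the data $(F_n,E_n,\ep_n)$. The sequence $(\ph_n)_n$ then represents a completely positive contractive map $\overline\ph\colon\mathcal{D}\to A_\I$; condition~(1) places $\overline\ph(\mathcal{D})$ inside $A_\I\cap A'$; condition~(2), together with continuity and density of $\bigcup_n E_n$, makes $\kappa_A\circ\overline\ph$ multiplicative; and since completely positive maps are automatically $\ast$-preserving, $\kappa_A\circ\overline\ph\colon\mathcal{D}\to F(A)$ is a $\ast$-homomorphism, which is unital by condition~(3). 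The cited characterization, applied in the other direction, yields that $A$ is $\mathcal{D}$-stable.

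I do not expect a serious obstacle, because the only deep ingredient — producing an isomorphism $A\cong A\otimes\mathcal{D}$ from a unital homomorphism $\mathcal{D}\to F(A)$ — is imported wholesale from~\cite{TomWin_SSA}. The two points that deserve a careful sentence are: the non-unital form of the Toms--Winter criterion, which is precisely what the $F(A)$-formalism of \autoref{df:CtralSeq} supplies; and the reduction of a possibly non-contractive completely positive $\varphi$ to a contractive one, so that the sequences $(\ph_n)_n$ actually define bounded elements of $A_\I$ — and this is exactly where hypothesis~(3), asserting that $\varphi(1)$ acts as an approximate unit on $A$, is used.
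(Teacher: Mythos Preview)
Your approach is correct and matches the paper's: the paper omits the proof of \autoref{thm:CriterionSSA} entirely, stating only that it is ``a straightforward combination of Theorem~2.2 in \cite{TomWin_SSA} and the Choi-Effros lifting theorem'' and pointing to Proposition~4.1 in~\cite{HRW_CXalg} for the non-unital case. Your write-up unpacks precisely this combination---lifting a unital homomorphism $\mathcal{D}\to F(A)$ to an asymptotically central, asymptotically multiplicative sequence of c.p.c.\ maps via Choi--Effros (using nuclearity of $\mathcal{D}$), and conversely assembling such a sequence into a unital homomorphism into $F(A)$---so there is nothing to add.
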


The following result was obtained for unital \ca s as part (1) of Corollary~3.4 in \cite{HirWin_Rp}, using
different methods. Our proof of the general case illustrates the generality of our approach.

\begin{thm}\label{thm:SSApreserved}
Let $A$ be a separable \ca, let $G$ be a second-countable compact group, and let
$\alpha\colon G\to\Aut(A)$ be an action with the \Rp. Let $\mathcal{D}$ be a strongly
self-absorbing \ca\ and assume that $A$ is $\mathcal{D}$-stable. Then $A^\alpha$ and
$A\rtimes_\alpha G$ are $\mathcal{D}$-stable as well.\end{thm}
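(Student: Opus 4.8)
The plan is to reduce $\mathcal{D}$-stability of $A^\alpha$ to the criterion in \autoref{thm:CriterionSSA} by composing a witnessing map for $A$ with the approximate homomorphism $\psi\colon A\to A^\alpha$ provided by \autoref{thm:ApproxHom}. Concretely, fix $\ep>0$, a finite set $F\subseteq A^\alpha$ and a finite set $E\subseteq\mathcal{D}$. First I would invoke $\mathcal{D}$-stability of $A$ together with \autoref{thm:CriterionSSA} to obtain a completely positive contractive map $\varphi\colon\mathcal{D}\to A$ that is approximately multiplicative on $E$, approximately commutes with $F$, and is approximately unital on $F$. The image $\varphi(E)\cup\varphi(E\cdot E)\cup\{\varphi(1)\}$ together with $F$ generates a finite (hence compact) subset $F_1\subseteq A$; I would also set $F_2=F\subseteq A^\alpha$. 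Apply \autoref{thm:ApproxHom} to these data with a small enough tolerance $\ep'$ (to be fixed at the end) to get a completely positive contractive map $\psi\colon A\to A^\alpha$ which is approximately multiplicative on $F_1$ and nearly fixes $F_2$. Then set $\Phi=\psi\circ\varphi\colon\mathcal{D}\to A^\alpha$; this is completely positive and contractive (unital in the unital case, since both $\varphi$ and $\psi$ can be arranged unital).

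Next I would verify that $\Phi$ satisfies the three conditions of \autoref{thm:CriterionSSA} relative to $A^\alpha$. For condition (2), $\|\Phi(de)a-\Phi(d)\Phi(e)a\|$ is estimated by inserting $\psi(\varphi(d)\varphi(e))$ as an intermediate term: the difference $\Phi(de)-\psi(\varphi(d)\varphi(e))=\psi(\varphi(de)-\varphi(d)\varphi(e))$ is small because $\psi$ is contractive and $\varphi$ is approximately multiplicative on $E$, while $\psi(\varphi(d)\varphi(e))a-\psi(\varphi(d))\psi(\varphi(e))a$ is small because $\psi$ is approximately multiplicative on $F_1$ (after also using $\|\psi(a)-a\|<\ep'$ for $a\in F_2$ to pass from $a$ to $\psi(a)$ inside the estimate). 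For condition (3), $\|\Phi(1)a-a\|\le\|\psi(\varphi(1))a-\psi(\varphi(1))\psi(a)\|+\|\psi(\varphi(1)a)-\psi(a)\|+\|\psi(a)-a\|$, and each piece is controlled by approximate multiplicativity of $\psi$ on $F_1$, by $\|\varphi(1)a-a\|<\ep$, and by $\psi$ nearly fixing $F_2$. Condition (1), the approximate commutation $\|a\Phi(d)-\Phi(d)a\|$ for $a\in F$, is the one requiring a little care: I would write $a\Phi(d)-\Phi(d)a=\psi(a)\psi(\varphi(d))-\psi(\varphi(d))\psi(a)+(\text{error terms from }\|\psi(a)-a\|)$, then use approximate multiplicativity of $\psi$ on the pairs $(a,\varphi(d))$ and $(\varphi(d),a)$ to replace both products by $\psi(a\varphi(d))$ and $\psi(\varphi(d)a)$ respectively, whose difference is small because $\varphi$ approximately commutes with $F$ and $\psi$ is contractive.

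The main obstacle, as usual in these perturbation arguments, is bookkeeping: one must fix the tolerance $\ep'$ for \autoref{thm:ApproxHom} \emph{after} knowing the finite sets $F_1$, $F_2$ and the norm bounds of the elements of $\varphi(E)$, so that the accumulated error in each of the three conditions ends up below $\ep$. Since all maps in sight are contractive and the sets involved are finite, there is no circularity — $\varphi$ depends only on $(\ep,F,E)$, and then $\psi$ depends on $(\ep',F_1,F_2)$ — and a uniform choice $\ep'=\ep/N$ for a suitable absolute constant $N$ (say $N=10$, counting the terms in the estimates above) works. Finally, since $A$ is separable, so is $A^\alpha$, and having verified the criterion of \autoref{thm:CriterionSSA} for $A^\alpha$ we conclude that $A^\alpha$ is $\mathcal{D}$-stable; the statement for $A\rtimes_\alpha G$ then follows from \autoref{prop:saturated} (Morita equivalence) together with the fact that $\mathcal{D}$-stability passes to stably isomorphic separable \cas, exactly as outlined in \autoref{rem:CPisFixPtAlg}.
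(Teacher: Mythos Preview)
Your proposal is correct and follows essentially the same approach as the paper: obtain $\varphi\colon\mathcal{D}\to A$ from \autoref{thm:CriterionSSA}, compose with the approximate homomorphism $\psi\colon A\to A^\alpha$ from \autoref{thm:ApproxHom}, and verify the three conditions for $\psi\circ\varphi$; the crossed product case is then handled via Morita equivalence. Your bookkeeping is in fact slightly more explicit than the paper's (you correctly include $\varphi(E)\cup\varphi(E\cdot E)$ in $F_1$, whereas the paper tacitly relies on the sequence version of \autoref{thm:ApproxHom} for separable $A$, which is asymptotically multiplicative on all of $A$).
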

\begin{proof} Since $\mathcal{D}$-stability is preserved under Morita equivalence
by Corollary~3.2 in \cite{TomWin_SSA}, it is enough to prove the result for $A^\alpha$.

Let $\ep>0$, and let $F\subseteq A^\alpha$ and $E\subseteq \mathcal{D}$ be finite subsets of $A$
and $\mathcal{D}$, respectively. Use \autoref{thm:CriterionSSA} to choose a
completely positive map $\varphi\colon\mathcal{D}\to A$ satisfying
\be\item $\|a\varphi(d)-\varphi(d)a\|<\ep$ for all $a\in F$ and for all $d\in E$;
\item $\|\varphi(de)a-\varphi(d)\varphi(e)a\|<\ep$ for all $d,e\in E$ and all $a\in F$;
\item $\|\varphi(1)a-a\|<\ep$ and $\|a\varphi(1)-a\|<\ep$ for all $a\in F$.
\ee

Let $(\psi_n)_{n\in\N}$ be a sequence of completely positive contractive maps $\psi_n\colon A\to A^\alpha$
as in the conclusion of \autoref{thm:ApproxHom} for $F_1=F\cup \{\varphi(1)\}$ and $F_2=F$.
Since $\lim\limits_{n\to\I}\psi_n(a)=a$ for all $a\in F$,
we deduce that
\[\limsup_{n\to\I} \left\|a\psi_n(\varphi(d))-\psi_n(\varphi(d))a\right\|\leq
\|a\varphi(d)-\varphi(d)a\|<\ep\]
for all $a\in F$ and all $d\in E$. Likewise,
\[\limsup_{n\to\I} \left\|\psi_n(\varphi(de))-\psi_n(\varphi(d))\psi_n(\varphi(e))\right\|\leq
\|\varphi(de)-\varphi(d)\varphi(e)\|<\ep\]
for all $d,e\in E$. Finally, for $a\in F$, we have
\[\limsup_{n\to\I}\|\psi_n(\varphi(1))a-a\|\leq \|\varphi(1)a-a\|<\ep\]
and
\[\limsup_{n\to\I}\|a\psi_n(\varphi(1))-a\|\leq \|a\varphi(1)-a\|<\ep.\]

We conclude that for $n$ large enough, the completely positive contractive map
$$\psi_n\circ\varphi\colon\mathcal{D}\to A^\alpha$$
satisfies conditions (1) through (3) of \autoref{thm:CriterionSSA}, showing that $A^\alpha$ is $\mathcal{D}$-stable.
\end{proof}

Similar methods allow one to prove that the property of being approximately divisible is inherited
by the crossed product and the fixed point algebra of a compact group action with the \Rp. (This
was first obtained, for \uca s, by Hirshberg and Winter as part (2) of Corollary~3.4 in \cite{HirWin_Rp}.) Our
proof is completely analogous to that of \autoref{thm:SSApreserved} (using a suitable version of
\autoref{thm:CriterionSSA}), so for the sake of brevity, we shall not present it here.

\vspace{0.3cm}

Our next goal is to show that Rokhlin actions preserve the property of having tracial rank at most one
in the simple, unital case.

We will need a definition of tracial rank zero and one. What we reproduce below are not Lin's original
definitions (Definition~2.1 in~\cite{Lin_TAI} and Definition~2.1 in~\cite{Lin_TAF}). Nevertheless, the
notions we define are equivalent in the simple case:
for tracial rank zero, this follows from Proposition~3.8 in \cite{Lin_TAF},
while the argument in the proof of said proposition can be adapted to show the corresponding result
for tracial rank one. Recall that an \emph{interval algebra} is a \ca\ of the form $C([0,1])\otimes E$,
where $E$ is a finite dimensional \ca. Such algebras have a finite presentation with stable relations;
see \cite{Lor_Book}.

\begin{df}\label{df:TAF}
Let $A$ be a simple, unital \ca. We say that $A$ has \emph{tracial rank at most one}, and write
$\mathrm{TR}(A)\leq 1$, if for every
finite subset $F\subseteq A$, for every $\ep>0$, and for every non-zero positive element $x\in A$,
there exist a projection $p\in A$, an interval algebra $B$, and a unital homomorphism
$\varphi\colon B\to pAp$, such that
\be\item $\|ap-pa\|<\ep$ for all $a\in F$;
\item $\dist(pap,\varphi(B))<\ep$ for all $a\in F$;
\item $1-p$ is Murray-von Neumann equivalent to a projection in $\overline{xAx}$.\ee

Additionally, we say that $A$ has \emph{tracial rank zero}, and write $\mathrm{TR}(A)=0$, if the \ca\ $B$ as
above can be chosen to be finite dimensional.
\end{df}

We will need the following notation. For $t\in \left(0,\frac{1}{2}\right)$, we denote by $f_t\colon [0,1]\to [0,1]$ the
continuous function that takes the value 0 on $[0,t]$, the value 1 on $[2t,1]$, and is linear on $[t,2t]$.

\begin{thm}\label{thm:TAF}
Let $A$ be a unital, separable, simple \ca\ with $\mathrm{TR}(A)\leq 1$,
let $G$ be a second-countable compact group, and let
$\alpha\colon G\to\Aut(A)$ be an action with the \Rp. Then
$A^\alpha$ is a unital, separable, simple \ca\ with $\mathrm{TR}(A^\alpha)\leq \mathrm{TR}(A)$.
If $G$ is finite, then the same holds for the crossed product $A\rtimes_\alpha G$.
\end{thm}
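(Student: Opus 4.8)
The plan is to verify \autoref{df:TAF} for $A^\alpha$ directly, transporting a witnessing decomposition of $A$ across the averaging maps of \autoref{thm:ApproxHom}, and then to reduce the crossed product assertion to the statement about $A^\alpha$. Note first that $A^\alpha$ is unital (it contains $1_A$), separable, and simple by \autoref{cor:Simple}. When $G$ is finite, $A\rtimes_\alpha G$ is unital, separable and simple (again by \autoref{cor:Simple}); since $\alpha$ is saturated by \autoref{prop:saturated}, the algebras $A^\alpha$ and $A\rtimes_\alpha G$ are Morita equivalent, and tracial rank is a Morita invariant for simple unital \cas, so $\mathrm{TR}(A\rtimes_\alpha G)=\mathrm{TR}(A^\alpha)$. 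This is also why the crossed product case is only claimed for finite $G$: for infinite compact $G$ the crossed product fails to be unital. Write $n=\mathrm{TR}(A)\in\{0,1\}$, and let $\mathcal{C}$ be the class of finite-dimensional \cas\ if $n=0$ and the class of interval algebras if $n=1$; in either case the members of $\mathcal{C}$ are semiprojective, so \autoref{rem:WSPinpractice} will allow us to straighten approximate homomorphisms out of them.

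Now fix a finite $F\subseteq A^\alpha$ with $1\in F$, a nonzero positive $x\in A^\alpha$, and $\ep>0$, rescaled so all relevant norms are at most $1$. Apply the definition of $\mathrm{TR}(A)\le n$ with input data $F$, a tolerance $\delta>0$ to be chosen at the end, and the positive element $x$: we obtain a projection $q\in A$, an algebra $B\in\mathcal{C}$, a unital homomorphism $\varphi_0\colon B\to qAq$ with $\|aq-qa\|<\delta$ and $\dist(qaq,\varphi_0(B))<\delta$ for $a\in F$, and a partial isometry $w\in A$ with $w^*w=1-q$ and $ww^*\in\overline{xAx}$ a projection. Fix $b_a\in B$ with $\|qaq-\varphi_0(b_a)\|<\delta$ for each $a\in F$, and $z\in A$ with $\|ww^*-xzx\|<\delta$. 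Let $F_1\subseteq A$ be a finite set containing $F$, $q$, $w$, $ww^*$, $z$, $x$, the elements $b_a$ and $\varphi_0(b_a)$, a fixed finite unit-ball subset of $B$ large enough to control a $\ast$-homomorphism out of $B$ by semiprojectivity, and enough products of these; put $F_2=F\cup\{x\}$. By \autoref{thm:ApproxHom} choose a unital completely positive contractive $\psi\colon A\to A^\alpha$ that is $\delta$-multiplicative on $F_1$ and satisfies $\|\psi(a)-a\|<\delta$ for $a\in F_2$.

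For the transport, $\psi(q)$ is a self-adjoint near-idempotent, so functional calculus gives a projection $p_0\in A^\alpha$ with $\|p_0-\psi(q)\|$ small. The compression $p_0(\psi\circ\varphi_0)(\cdot)p_0\colon B\to A^\alpha$ is $O(\delta)$-multiplicative on the chosen subset of $B$ (using $\psi(\varphi_0(1_B))=\psi(q)\approx p_0$), so by semiprojectivity of $B$ and \autoref{rem:WSPinpractice} there is a genuine homomorphism $\varphi\colon B\to A^\alpha$ with $\|\varphi(b)-p_0\psi(\varphi_0(b))p_0\|$ small for the relevant $b$; setting $p:=\varphi(1_B)$ makes $\varphi$ a unital homomorphism into $pA^\alpha p$, with $\|p-\psi(q)\|$ still $O(\delta)$. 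Then for $a\in F$ one has $\|ap-pa\|\approx\|\psi(aq)-\psi(qa)\|=\|\psi(aq-qa)\|<\delta$ and $\dist(pap,\varphi(B))\le\|pap-\psi(qaq)\|+\|\psi(qaq)-\psi(\varphi_0(b_a))\|+\|\psi(\varphi_0(b_a))-\varphi(b_a)\|$, each summand being $O(\delta)$. For the complementary projection, $\psi(w)^*\psi(w)\approx\psi(w^*w)=1-\psi(q)\approx 1-p$ and $\psi(w)\psi(w)^*\approx\psi(ww^*)\approx x\psi(z)x\in\overline{xA^\alpha x}$; applying polar decomposition to $\psi(w)(1-p)$ inside $(1-p)A^\alpha(1-p)$ produces a partial isometry $w'\in A^\alpha$ with $(w')^*w'=1-p$ and $w'(w')^*$ a projection within $O(\delta)$ of $\psi(ww^*)$, hence, after one more functional-calculus adjustment, a projection in $\overline{xA^\alpha x}$ that is \mvnt\ to $1-p$. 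Choosing $\delta$ (and the $\delta$-multiplicativity parameter) small enough that all these $O(\delta)$ errors fall below $\ep$ shows that $A^\alpha$ satisfies \autoref{df:TAF} with building block $B\in\mathcal{C}$, i.e.\ $\mathrm{TR}(A^\alpha)\le n=\mathrm{TR}(A)$.

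The main obstacle is condition (3) of \autoref{df:TAF}: the smallness of $1-p$ relative to $x$ must be witnessed \emph{inside} $A^\alpha$, not merely inside the ambient algebra $A$. This is what forces the polar-decomposition perturbation of $\psi(w)$ together with the observation that $\psi$ carries $\overline{xAx}$ approximately into $\overline{xA^\alpha x}$ (which is where $\psi(x)\approx x$ enters). The remaining work — bookkeeping of exactly which finitely many elements and products must be fed to \autoref{thm:ApproxHom} so that the composites $\psi\circ\varphi_0$ and $\psi|_{qAq}$ are uniformly controlled, and invoking semiprojectivity of interval algebras (resp.\ finite-dimensional algebras) to upgrade $\varphi$ to a genuine unital homomorphism — is routine.
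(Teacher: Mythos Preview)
Your proposal is correct and follows essentially the same strategy as the paper's proof: transport the tracial-rank witnesses from $A$ through the averaging map of \autoref{thm:ApproxHom}, perturb near-projections to genuine ones, invoke semiprojectivity of the building blocks to straighten the homomorphism, and establish the Murray--von~Neumann equivalence for condition~(3). The only minor difference is in the handling of condition~(3): the paper first cuts down to $y=(x-2t)_+$ and uses that $f_t(x)\psi_n(q)f_t(x)$ lands \emph{exactly} in $\overline{xA^\alpha x}$ (since $f_t(x)\in A^\alpha$), whereas you approximate $ww^*\approx xzx$ and use $\psi(x)\approx x$ to land \emph{near} that hereditary subalgebra before a final perturbation---both routes work, and the paper's explicit invocation of Lin's Lemma~2.5.3 plays the same role as your polar-decomposition step.
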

When $G$ is not finite (but compact), then $A\rtimes_\alpha G$ is never unital, and the definition of
tracial rank at most one only applies to unital \ca s.
\begin{proof}
Let $F\subseteq A^\alpha$ be a finite subset, let $\ep>0$ and let $x\in A^\alpha$ be a non-zero positive element.
Without loss of generality, we may assume that $\|a\|\leq 1$ for all $a\in F$, and that $\ep<1$.
Find $t\in \left(0,\frac{1}{2}\right)$ such that $(x-2t)_+$ is not zero. Set $y=(x-2t)_+$. Then $y$ belongs to
$A^\alpha$ and moreover $f_t(x)y=yf_t(x)=y$.

Using that $A$ has tracial rank zero, find an interval algebra $B$, a projection $p\in A$, a unital homomorphism
$\varphi\colon B\to pAp$, a projection $q\in \overline{yAy}$ and a partial isometry $s\in A$ such that
\bi\item $\|ap-pa\|<\frac{\ep}{9}$ for all $a\in F$;
\item $\dist(pap,\varphi(B))<\frac{\ep}{9}$ for all $a\in F$;
\item $1-p=s^*s$ and $q=ss^*$. \ei

Let $\widetilde{F}\subseteq B$ be a finite subset such that for all $a\in F$, there exists $b\in\widetilde{F}$
with $\|pap-\varphi(b)\|<\frac{\ep}{9}$.

Since $f_t(x)$ is a unit for $\overline{yAy}$, it follows that $q=f_t(x)qf_t(x)$.
As $A$ is unital and separable, we can use \autoref{thm:ApproxHom} to find an approximate homomorphism
$(\psi_n)_{n\in\N}$ from $A$ to $A^\alpha$, consisting of unital completely positive maps $\psi_n\colon A\to A^\alpha$
satisfying $\lim\limits_{n\to\I}\|\psi_n(a)-a\|=0$ for all $a\in A^\alpha$. (For example, one chooses increasing
families $\left(F_1^{(n)}\right)_{n\in\N}$ and $\left(F_2^{(n)}\right)_{n\in\N}$ of finite subsets of $A$ and $A^\alpha$,
respectively, whose union is dense in $A$ and $A^\alpha$, and obtains $\psi_n$ by applying the main part of \autoref{thm:ApproxHom}
with tolerance $\ep_n=1/n$ and sets $F_1^{(n)}\subseteq A$ and $F_2^{(n)}\subseteq A^\alpha$.) We then have
\be
\item[(a)]$\limsup\limits_{n\to\I}\left\|\psi_n(p)a-a\psi_n(p)\right\|<\frac{\ep}{9}$ for all $a\in F$;
\item[(b)]$\limsup\limits_{n\to\I} \dist\left(\psi_n(p)a\psi_n(a),(\psi_n\circ\varphi)(B)\right)<\frac{\ep}{9}$
for all $a\in F$;
\item[(c)]$\lim\limits_{n\to\I}\left\|\psi_n(p)a\psi_n(p)-\psi_n(pap)\right\|=0$;
\item[(d)]$\lim\limits_{n\to\I}\left\|\psi_n(p)^*\psi_n(p)-\psi_n(p)\right\|=0$;
\item[(e)]$\lim\limits_{n\to\I}\left\|1-\psi_n(p)-\psi_n(s)^*\psi_n(s)\right\|=0$;
\item[(f)]$\lim\limits_{n\to\I}\left\|\psi_n(q)\psi_n(s)\psi_n(1-p)-\psi_n(s)\right\|=0$;
\item[(g)]$\lim\limits_{n\to\I}\left\|\psi_n(q)^*\psi_n(q)-\psi_n(q)\right\|=0$;
\item[(h)]$\lim\limits_{n\to\I}\left\|\psi_n(q)-\psi_n(s)\psi_n(s)^*\right\|=0$;
\item[(i)]$\lim\limits_{n\to\I} \left\|\psi_n(q)-f_t(x)\psi_n(q)f_t(x)\right\|=0$.
\ee
With $r_n=f_t(x)\psi_n(q)f_t(x)$ for $n\in\N$, it follows from conditions (g) and (i) that
\be
\item[(j)] $\lim\limits_{n\to\I} \left\|r_n^*r_n-r_n\right\|=0$.
\ee
\ \\
\indent Find $\delta_1>0$ such that whenever $e$ is an element in a \ca\ $C$ such that $\|e^*e-e\|<\delta_1$, then
there exists a projection $f$ in $C$ such that $\|e-f\|<\frac{\ep}{9}$.
Fix a finite set $\mathcal{G}\subseteq B$ of generators for $B$. Using semiprojectivity of $B$ in the unital category
(specifically, the fact that the relations defining it are stable),
find $\delta_2>0$ such that whenever $D$ is a unital \ca\ and $\rho\colon B\to D$ is a unital positive linear map which
is $\delta_2$-multiplicative on $\mathcal{G}$, there exists a unital homomorphism $\pi\colon B\to D$
such that $\|\rho(b)-\pi(b)\|<\frac{\ep}{9}$ for all $b\in \widetilde{F}$. (Observe that we are not fixing the
target algebra $D$, which will later be taken to be of the form $fA^\alpha f$ for some projection $f\in A^\alpha$.)
Set $\delta=\min\{\delta_1,\delta_2\}$.

Choose $n\in\N$ large enough so that the quantities in conditions (a), (b), (c), (e) and (i) are less than
$\frac{\ep}{9}$, the quantities in (d) and (j) are less
than $\delta$, the quantities in (e) and (g) are less than $1-\ep$, and so that $\psi_n\circ\varphi$ is
$\delta$-multiplicative on $\mathcal{G}$.
Since $r_n$ belongs to $\overline{xA^\alpha x}$ for all $n\in\N$, by the choice of $\delta$ there
exist a projection $e$ in $\overline{xA^\alpha x}$ such that $\|e-r_n\|<\frac{\ep}{9}$, and a projection
$f\in A^\alpha$ such that $\|f-\psi_n(p)\|<\frac{\ep}{9}$.
Let $\pi\colon B\to fA^\alpha f$ be a unital homomorphism satisfying
\[\|\pi(b)-(\psi_n\circ\varphi)(b)\|<\frac{\ep}{9}\]
for all $b\in\mathcal{G}\cup\widetilde{F}$.

We claim that the projection $f$ and the homomorphism $\pi\colon B\to fA^\alpha f$ satisfy the conditions
in \autoref{df:TAF}. Since $\pi$ is unital, we must have $\pi(1)=f$.

Given $a\in F$, the estimate
\[\|af-fa\|\leq \|a\psi_n(p)-\psi_n(p)a\|+2\|\psi_n(p)-f\|<\frac{3\ep}{9}<\ep\]
shows that condition (1) is satisfied. In order to check condition (2), given $a\in F$, choose $b\in \widetilde{F}$ such that
\[\|pap-\varphi(b)\|<\frac{\ep}{9}.\]
Then
\begin{align*} \|faf-\pi(b)\|&\leq \|faf-\psi_n(p)a\psi_n(p)\|+\|\psi_n(p)a\psi_n(p)-\psi_n(\varphi(b))\| \\
& \ \ \ \  + \|\psi_n(\varphi(b))-\pi(b)\|\\
&< 2\|f-\psi_n(p)\|+\frac{\ep}{9}+\frac{\ep}{9}<\ep,\end{align*}
so condition (2) is also satisfied. To check condition (3), it is enough to show that $1-f$ is Murray-von Neumann
equivalent (in $A^\alpha$) to $e$. We have
\begin{align*}
\|(1-f)-\psi_n(s)^*\psi_n(s)\|&\leq \|f-\psi_n(p)\|+\|1-\psi_n(p)-\psi_n(s)^*\psi_n(s)\|\\
&<\frac{\ep}{9}+1-\ep=1-\frac{8\ep}{9},\end{align*}
and likewise, $\|e-\psi_n(s)\psi_n(s)^*\|<\frac{\ep}{9}+1-\ep$. On the other hand, we use the approximate versions
of equation (i) at the second step, and that of equation (f) at the third step, to get
\begin{align*}
\|\psi_n(s)-e\psi_n(s)(1-f)\|&< \frac{2\ep}{9}+ \|\psi_n(s)-f_t(x)\psi_n(q)f_t(x)\psi_n(s)\psi_n(1-p)\|\\
&<\frac{3\ep}{9}+\|\psi_n(s)-\psi_n(q)\psi_n(s)\psi_n(1-p)\|\\
&< \frac{4\ep}{9}.\end{align*}
Now, it is immediate that
\begin{align*}
\|(1-f)-(e\psi_n(s)(1-f))^*(e\psi_n(s)(1-f))\| &< 2\|\psi_n(s)-e\psi_n(s)(1-f)\| \\
& \ \ \ \ + \|(1-f)-\psi_n(s)^*\psi_n(s)\|\\
&< \frac{8\ep}{9}+1-\frac{8\ep}{9}=1.
\end{align*}
Likewise,
\[\|e-(e\psi_n(s)(1-f))(e\psi_n(s)(1-f))^*\|<1.\]

By Lemma~2.5.3 in \cite{Lin_Book} applied to $e\psi_n(s)(1-f)$, we conclude that $1-f$ and $e$ are
Murray-von Neumann equivalent in $A^\alpha$, and the proof of the first part of the statement is complete.

It is clear that if $A$ has tracial rank zero and we choose $B$ to be finite dimensional, then the above
proof shows that $A^\alpha$ has tracial rank zero as well.

Finally, if $G$ is finite, then the last claim of the statement follows from the fact that $A^\alpha$ and
$A\rtimes_\alpha G$ are Morita equivalent.
\end{proof}

We believe that a condition weaker than the Rokhlin property ought to suffice for the conclusion
of \autoref{thm:TAF} to hold. In view of Theorem~2.8 in \cite{Phi_tracialFirst}, we presume that
fixed point algebras by a suitable version of the tracial Rokhlin property for compact group
actions would preserve the class of simple \ca s with tracial rank zero.

We present two consequences of \autoref{thm:TAF}. The first one is to simple AH-algebras of slow
dimension growth and real rank zero, which do not a priori fit into the general framework of
\autoref{thm:RpPreservesWSPAQ}, since the building blocks are not necessarily weakly semiprojective.

\begin{cor}
Let $A$ be a simple, unital AH-algebra with slow dimension growth and real rank zero. Let $G$ be
a second-countable compact group, and let $\alpha\colon G\to\Aut(A)$ be an action with the Rokhlin
property. Then $A^\alpha$ is a simple, unital AH-algebra with slow dimension growth and real rank
zero.
\end{cor}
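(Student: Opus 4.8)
The plan is to reduce to the classification theory of simple \cas\ with tracial rank zero, using \autoref{thm:TAF} as the main transport mechanism. The first step is to recall that a simple, unital \AHa\ with slow dimension growth and \rrz\ has \trz; this follows from the work of Lin together with the classification results for real-rank-zero \AHas\ of Elliott and Gong (see, e.g., \cite{Lin_TAF}). Hence $\mathrm{TR}(A) = 0$.

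Next I would check that $A^\alpha$ satisfies the hypotheses required to transport tracial rank zero and to reconstruct an \AHa. Since $A$ is simple, \autoref{cor:Simple} shows that $A^\alpha$ is simple; it is unital (the unit of $A$ is fixed by $\alpha$) and separable (being a subalgebra of $A$). It is also nuclear: by Rosenberg's theorem $A^\alpha$ is a hereditary subalgebra of $A\rtimes_\alpha G$, which is nuclear because $A$ is nuclear and $G$, being compact, is amenable, and nuclearity passes to hereditary subalgebras. Therefore \autoref{thm:TAF} applies and gives $\mathrm{TR}(A^\alpha)\leq \mathrm{TR}(A) = 0$, so that $\mathrm{TR}(A^\alpha) = 0$; in particular $A^\alpha$ has \rrz. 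Moreover, $A$ is an \AHa\ and hence lies in the bootstrap class, so it satisfies the Universal Coefficient Theorem, and then so does $A^\alpha$ by \autoref{thm:UCT}.

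Finally, $A^\alpha$ is a separable, simple, unital, nuclear \ca\ with \trz\ satisfying the Universal Coefficient Theorem, so by Lin's classification theorem for such algebras (see \cite{Lin_Book}) it is isomorphic to a simple, unital \AHa\ with \rrz\ and no dimension growth; in particular it has slow dimension growth, as desired. The main obstacle here is not internal to the paper: it is the reliance on two substantial external inputs, namely the implication ``simple unital \AHa\ of \rrz\ and slow dimension growth $\Rightarrow$ \trz'' used at the start, and the converse reconstruction of an \AHa\ from tracial rank zero and the UCT at the end. Once these are granted, the argument is a direct assembly of \autoref{cor:Simple}, \autoref{thm:TAF}, and \autoref{thm:UCT}.
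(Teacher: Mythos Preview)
Your proposal is correct and follows essentially the same route as the paper: reduce to tracial rank zero via Lin's result on real-rank-zero AH-algebras, transport $\mathrm{TR}=0$ to $A^\alpha$ using \autoref{thm:TAF}, get the UCT from \autoref{thm:UCT}, and then invoke Lin's classification to rebuild an AH-algebra with slow dimension growth and real rank zero. The only cosmetic differences are that the paper cites \cite{Lin_TAF} (Proposition~2.6) and \cite{Lin_ClassifTAF} (Theorem~5.2) for the two external inputs rather than \cite{Lin_Book}, and it absorbs simplicity and nuclearity into the phrase ``clearly separable, unital, and nuclear'' rather than spelling them out as you do.
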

\begin{proof}
By Proposition~2.6 in~\cite{Lin_TAF}, $A$ has tracial rank zero. Thus $A^\alpha$ is a simple \ca\
with tracial rank zero by \autoref{thm:TAF}. It is clearly separable, unital, and nuclear. Moreover,
it satisfies the Universal Coefficient Theorem by \autoref{thm:UCT}.
Since AH-algebras of slow dimension growth and real rank zero exhaust
the Elliott invariant of \ca s with tracial rank zero, Theorem~5.2 in~\cite{Lin_ClassifTAF} implies
that $A^\alpha$ is an AH-algebra with slow dimension growth and real rank zero.
\end{proof}

Denote by $\mathcal{Q}$ the universal UHF-algebra.
Recall that a simple,
separable, \uca\ $A$ is said to have \emph{rational tracial rank
at most one}, if $\mathrm{TR}(A\otimes \mathcal{Q})\leq 1$
(see Definition~11.8 in~\cite{Lin_Asymptotic}, and see the comments after it for examples of algebras
of rational tracial rank at most one).

\begin{cor} \label{cor:RationallyTAI}
Let $A$ be a simple, separable, \uca, let $G$ be a second-countable compact group, and let
$\alpha\colon G\to\Aut(A)$ be an action with the \Rp. If $A$ has rational tracial rank at most one,
then so does $A^\alpha$ (and also $A\rtimes_\alpha G$ if $G$ is finite).
\end{cor}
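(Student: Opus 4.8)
The plan is to deduce this not from \autoref{thm:TAF} applied to $A$ itself, but from \autoref{thm:TAF} applied to $A\otimes\mathcal{Q}$ equipped with the tensor‑product action $\alpha\otimes\id_{\mathcal{Q}}$. First I would record the elementary facts that $A\otimes\mathcal{Q}$ is again unital and separable, and is simple because $\mathcal{Q}$ is nuclear (so the minimal tensor product of the simple algebras $A$ and $\mathcal{Q}$ is simple), and that the hypothesis that $A$ has rational tracial rank at most one says precisely that $\mathrm{TR}(A\otimes\mathcal{Q})\leq 1$.

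Next I would check that $\alpha\otimes\id_{\mathcal{Q}}$ has the Rokhlin property. Given a separable $(\alpha\otimes\id_{\mathcal{Q}})$‑invariant subalgebra $D\subseteq A\otimes\mathcal{Q}$, one finds a separable $\alpha$‑invariant subalgebra $D_0\subseteq A$ with $D\subseteq D_0\otimes\mathcal{Q}$, and then composes the equivariant unital homomorphism $C(G)\to F_\alpha(D_0,A)$ provided by the Rokhlin property for $\alpha$ with the natural equivariant unital homomorphism $F_\alpha(D_0,A)\to F_{\alpha\otimes\id_{\mathcal{Q}}}(D,A\otimes\mathcal{Q})$ to obtain the map required by \autoref{df:Rp}. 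This is routine and is also recorded in \cite{Gar_CptRok}; in short, the Rokhlin property is preserved under tensoring an action by $(\mathcal{Q},\id_{\mathcal{Q}})$.

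With these observations in hand, I would apply \autoref{thm:TAF} to the system $(A\otimes\mathcal{Q},\alpha\otimes\id_{\mathcal{Q}})$. It gives that $(A\otimes\mathcal{Q})^{\alpha\otimes\id_{\mathcal{Q}}}$ is unital, separable, simple, with $\mathrm{TR}\bigl((A\otimes\mathcal{Q})^{\alpha\otimes\id_{\mathcal{Q}}}\bigr)\leq 1$. Using the canonical identification $(A\otimes\mathcal{Q})^{\alpha\otimes\id_{\mathcal{Q}}}\cong A^\alpha\otimes\mathcal{Q}$ (the fixed‑point algebra of a diagonal action with one trivial factor is the image of $E\otimes\id_{\mathcal{Q}}$, where $E\colon A\to A^\alpha$ is the canonical conditional expectation), this says exactly that $A^\alpha$ has rational tracial rank at most one. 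When $G$ is finite, \autoref{thm:TAF} additionally yields $\mathrm{TR}\bigl((A\otimes\mathcal{Q})\rtimes_{\alpha\otimes\id_{\mathcal{Q}}}G\bigr)\leq 1$, and since $(A\otimes\mathcal{Q})\rtimes_{\alpha\otimes\id_{\mathcal{Q}}}G\cong (A\rtimes_\alpha G)\otimes\mathcal{Q}$ and $A\rtimes_\alpha G$ is unital precisely because $G$ is finite, the crossed product has rational tracial rank at most one as well. No step here is genuinely hard; the only points deserving a sentence of justification are the permanence of the Rokhlin property under $-\otimes(\mathcal{Q},\id_{\mathcal{Q}})$ and the two standard identifications of the fixed‑point algebra and the crossed product of a diagonal action with a tensor product, and if there is a ``main obstacle'' it is merely spelling these out correctly.
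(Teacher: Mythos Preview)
Your proposal is correct and follows exactly the same approach as the paper: apply \autoref{thm:TAF} to the action $\alpha\otimes\id_{\mathcal{Q}}$ on $A\otimes\mathcal{Q}$, then use the identifications $(A\otimes\mathcal{Q})^{\alpha\otimes\id_{\mathcal{Q}}}\cong A^{\alpha}\otimes\mathcal{Q}$ and (for finite $G$) $(A\otimes\mathcal{Q})\rtimes_{\alpha\otimes\id_{\mathcal{Q}}}G\cong (A\rtimes_\alpha G)\otimes\mathcal{Q}$. The paper's proof is a one-line reference to this reduction, and your write-up simply fills in the routine verifications (simplicity of $A\otimes\mathcal{Q}$, permanence of the Rokhlin property under $-\otimes(\mathcal{Q},\id_{\mathcal{Q}})$, and the two tensor-product identifications) that the paper leaves implicit.
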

\begin{proof}
The result is an immediate consequence of \autoref{thm:TAF} applied to the action
$\alpha\otimes\id_{\mathcal{Q}}\colon G\to \Aut(A\otimes\mathcal{Q})$.
\end{proof}

We now turn to pure infiniteness in the non-simple case. The following is Definition~4.1 in
\cite{KirRor_absOI}

\begin{df}\label{df:PI}
A \ca\ $A$ is said to be \emph{purely infinite} if the following conditions are satisfied:
\be\item There are no non-zero characters (this is, homomorphisms onto the complex numbers) on $A$, and
\item For every pair $a,b$ of positive elements in $A$, with $b$ in the ideal generated by $a$, there
exists a sequence $(x_n)_{n\in\N}$ in $A$ such that $\lim\limits_{n\to\I}\|x_n^*bx_n-a\|=0$. \ee
\end{df}

The following is Theorem~4.16 in \cite{KirRor_absOI} (see also Definition~3.2 in \cite{KirRor_absOI}).

\begin{thm} \label{thm:EquivPI}
Let $A$ be a \ca. Then $A$ is purely infinite if and only if for every nonzero positive
element $a\in A$, we have $a\oplus a\preceq a$.\end{thm}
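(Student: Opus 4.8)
The plan is to prove the two implications separately, leaning throughout on the standard calculus of Cuntz comparison: that $x\preceq y$ holds if and only if $(x-\ep)_+\preceq y$ for every $\ep>0$; that $\|x-y\|<\ep$ forces $(x-\ep)_+\preceq y$; that mutually orthogonal positive elements satisfy $x\oplus y\preceq x+y$ inside $M_2(A)$; that $x\preceq y$ implies $x\oplus x\preceq y\oplus y$, so that $x\oplus x\preceq x$ iterates to $x^{\oplus n}\preceq x$ for all $n$; that a positive element $c$ lies in the closed two-sided ideal generated by a positive element $b$ if and only if for every $\ep>0$ there is $n$ with $(c-\ep)_+\preceq b^{\oplus n}$ in $M_n(A)$; and that Cuntz subequivalence is preserved by $\ast$-homomorphisms. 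All of these are classical facts (due to R{\o}rdam and Kirchberg--R{\o}rdam).

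First I would prove that the stated condition implies pure infiniteness, by checking conditions~(1) and~(2) of \autoref{df:PI}. If $A$ admitted a character $\rho\colon A\to\C$, I would choose $a\in A_+$ with $\rho(a)=1$ and push $a\oplus a\preceq a$ forward along $\rho$ to get $\diag(1,1)\preceq\diag(1,0)$ in $M_2(\C)$, which is impossible since Cuntz comparison in $M_2$ is just comparison of ranks; this gives condition~(1). For condition~(2), given $a,b\in A_+$ with $a$ in the closed ideal generated by $b$, I must produce $(x_n)$ with $x_n^*bx_n\to a$, that is, show $a\preceq b$; for $\ep>0$ I pick $n$ with $(a-\ep)_+\preceq b^{\oplus n}$, use $b^{\oplus n}\preceq b$ to obtain $(a-\ep)_+\preceq b$, and let $\ep\to0$.

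For the converse, assume $A$ is purely infinite and fix a nonzero $a\in A_+$. Inside $M_2(A)$, the element $a\oplus a$ lies in the closed two-sided ideal generated by $a\oplus0$, namely $M_2(\overline{AaA})$. If $M_2(A)$ is again purely infinite, then condition~(2) of \autoref{df:PI} applied to the pair $(a\oplus a,\,a\oplus0)$ gives $a\oplus a\preceq a\oplus0$, and since $a\oplus0$ is Cuntz equivalent to $a$ this is exactly $a\oplus a\preceq a$. Thus the whole burden falls on the permanence statement that pure infiniteness passes to matrix amplifications; I would establish it by transporting conditions~(1) and~(2) of \autoref{df:PI} across the obvious identifications, the one subtle point being to lift an element of an ideal of $M_n(A)$ to a form on which condition~(2) for $A$ can be applied entrywise. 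This permanence property --- most naturally proved together with the analogous and mutually reinforcing statements for hereditary subalgebras, ideals, and extensions --- is the part I expect to be the main obstacle. If one insists on staying inside $A$, the alternative is to construct directly two mutually orthogonal positive elements of $\overline{aAa}$, each Cuntz-dominating $(a-\ep)_+$, that is, to halve $a$ up to Cuntz equivalence; but this again needs condition~(2) together with a subdivision argument and is of comparable difficulty.
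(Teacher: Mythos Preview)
The paper does not prove this statement at all: it is quoted verbatim as Theorem~4.16 of \cite{KirRor_absOI}, with no argument given. So there is no ``paper's own proof'' to compare your proposal against.

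That said, your outline is correct and is essentially the route taken in \cite{KirRor_absOI}. The direction ``$a\oplus a\preceq a$ for all nonzero $a\in A_+$ implies $A$ is purely infinite'' is exactly as you describe: characters are ruled out by pushing the comparison forward, and condition~(2) of \autoref{df:PI} follows from the ideal-membership characterization $(a-\ep)_+\preceq b^{\oplus n}$ combined with the iterated hypothesis $b^{\oplus n}\preceq b$. For the converse, you correctly isolate the key step as the permanence of pure infiniteness under passage to $M_2(A)$ (equivalently, to $A\otimes\K$); this is indeed the nontrivial ingredient, and in \cite{KirRor_absOI} it is established as part of a package of permanence results (hereditary subalgebras, ideals, quotients, extensions, stable isomorphism) culminating in their Theorem~4.23. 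Your remark that these permanence properties are ``mutually reinforcing'' is apt: the matrix-stability statement is most cleanly obtained alongside the others rather than in isolation. Your alternative sketch, halving $a$ inside $\overline{aAa}$ up to Cuntz equivalence, is also viable and closer in spirit to how Kirchberg--R{\o}rdam handle the hereditary case.
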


We use the above result to show that, in the presence of the \Rp, pure infiniteness is inherited by
the fixed point algebra and the crossed product.

\begin{prop} \label{prop:PI}
Let $A$ be a $\sigma$-unital \ca, let $G$ be a second-countable compact group, and let
$\alpha\colon G\to\Aut(A)$ be an action with the \Rp. If $A$ is purely infinite, then so are
$A^\alpha$ and $A\rtimes_\alpha G$.
\end{prop}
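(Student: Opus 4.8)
My plan is to verify the local characterization of pure infiniteness given by \autoref{thm:EquivPI} directly for the fixed point algebra, and then to deduce the crossed product case by realizing $A\rtimes_\alpha G$ itself as a fixed point algebra of the same type. For the latter reduction: by \autoref{rem:CPisFixPtAlg} one has $A\rtimes_\alpha G\cong(A\otimes\K(L^2(G)))^{\alpha\otimes\Ad(\lambda)}$, the algebra $A\otimes\K(L^2(G))$ is again $\sigma$-unital and, by~\cite{KirRor_absOI}, again purely infinite (pure infiniteness is preserved by tensoring with $\K$, or with $M_n$ when $G$ is finite), and $\alpha\otimes\Ad(\lambda)$ has the \Rp; so once the fixed point algebra statement is in hand, applying it to this action gives the conclusion for $A\rtimes_\alpha G$. (Alternatively, invoke \autoref{prop:saturated} together with the Morita invariance of pure infiniteness for $\sigma$-unital \cas.) It therefore suffices to prove that $A^\alpha$ is purely infinite, i.e., by \autoref{thm:EquivPI}, that $a\oplus a\preceq a$ in $M_2(A^\alpha)$ for every nonzero positive $a\in A^\alpha$.

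To do this, I would fix such an $a$ and $\ep>0$, and, writing $a\oplus 0$ for the image of $a$ in the upper-left corner of $M_2$, use that $A$ is purely infinite (so, by \autoref{thm:EquivPI} applied in $A$, $a\oplus a\preceq a$ in $M_2(A)$) to pick $x\in M_2(A)$ with $\|x^*(a\oplus 0)x-(a\oplus a)\|<\ep$. Let $F_1\subseteq A$ be a finite set containing $a$ and all matrix entries occurring in $x$, $x^*$ and in the product $x^*(a\oplus 0)x$, and put $F_2=\{a\}$. For a small $\delta>0$ to be chosen at the end, \autoref{thm:ApproxHom} supplies a completely positive contractive map $\psi\colon A\to A^\alpha$ with $\|\psi(bc)-\psi(b)\psi(c)\|<\delta$ for $b,c\in F_1$ and $\|\psi(a)-a\|<\delta$. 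I would then pass to the amplification $\psi^{(2)}=\psi\otimes\id_{M_2}\colon M_2(A)\to M_2(A^\alpha)$, which is again completely positive contractive, and set $z=\psi^{(2)}(x)\in M_2(A^\alpha)$, noting $\|z\|\le\|x\|$.

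The computation is then the expected $\ep$--$\delta$ bookkeeping: by the choice of $F_1$, iterating the multiplicativity estimate over the finitely many entries involved gives $\|z^*\psi^{(2)}(a\oplus 0)z-\psi^{(2)}(x^*(a\oplus 0)x)\|<C\delta$ for a constant $C$ depending only on $\|x\|$; contractivity of $\psi^{(2)}$ gives $\|\psi^{(2)}(x^*(a\oplus 0)x)-\psi^{(2)}(a\oplus a)\|<\ep$; one has $\psi^{(2)}(a\oplus a)=\psi(a)\oplus\psi(a)$, which is within $\delta$ of $a\oplus a$; and since $\psi^{(2)}(a\oplus 0)=\psi(a)\oplus 0$, replacing it by $a\oplus 0$ moves $z^*\psi^{(2)}(a\oplus 0)z$ by at most $\|x\|^2\|\psi(a)-a\|<\|x\|^2\delta$. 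Adding these up yields $\|z^*(a\oplus 0)z-(a\oplus a)\|<\ep+(C+1+\|x\|^2)\delta$, and choosing $\delta$ small enough that $(C+1+\|x\|^2)\delta<\ep$ produces $z\in M_2(A^\alpha)$ with $\|z^*(a\oplus 0)z-(a\oplus a)\|<2\ep$. Since $\ep$ was arbitrary this shows $a\oplus a\preceq a$ in $M_2(A^\alpha)$, so $A^\alpha$ is purely infinite by \autoref{thm:EquivPI}.

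The main obstacle, such as it is, lies precisely in that transfer step: since $\psi$ is only approximately multiplicative and only approximately fixes $A^\alpha$, one must make sure that conjugating $a\oplus 0$ by the amplified image of a witness $x$ of $a\oplus a\preceq a$ in $A$ still yields, in the limit $\delta\to 0$, a genuine witness in $A^\alpha$ with no loss of control --- which is routine once \autoref{thm:EquivPI} is available. The real content is the recognition that pure infiniteness is captured by the single approximate relation $a\oplus a\preceq a$ among positive elements, exactly the sort of condition that \autoref{thm:ApproxHom} is built to push from $A$ down to $A^\alpha$; in particular, no separate treatment of characters is needed, as their absence is already subsumed by \autoref{thm:EquivPI}.
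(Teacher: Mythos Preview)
Your proposal is correct and follows essentially the same approach as the paper: both reduce to $A^\alpha$ via \autoref{rem:CPisFixPtAlg} (or \autoref{prop:saturated} and Morita invariance), invoke the characterization $a\oplus a\preceq a$ from \autoref{thm:EquivPI}, and push a Cuntz-subequivalence witness from $A$ down to $A^\alpha$ using the approximate left inverse of \autoref{thm:ApproxHom}. The only cosmetic difference is that the paper writes the witness as a pair $(x_n,y_n)\in A\times A$ and records the four entry-wise conditions $x_n^*ax_n\to a$, $x_n^*ay_n\to 0$, $y_n^*ax_n\to 0$, $y_n^*ay_n\to a$, whereas you package the same data as a single element $x\in M_2(A)$ and amplify $\psi$ to $M_2$; these are equivalent formulations of the same argument.
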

\begin{proof}
By \autoref{prop:saturated} (see also \autoref{rem:CPisFixPtAlg}) and Theorem~4.23
in~\cite{KirRor_absOI}, it is enough to prove
the result for $A^\alpha$. Let $a$ be a nonzero positive element in $A^\alpha$. Since $A$ is
purely infinite, by Theorem~4.16 in \cite{KirRor_absOI} (here reproduced as \autoref{thm:EquivPI}),
there exist sequences $(x_n)_{n\in\N}$ and $(y_n)_{n\in\N}$ in $A$ such that
\be
\item[(a)]$\lim\limits_{n\to\I} \|x_n^*ax_n-a\|=0$;
\item[(b)]$\lim\limits_{n\to\I} \|x_n^*ay_n\|=0$;
\item[(c)]$\lim\limits_{n\to\I} \|y_n^*ax_n\|=0$;
\item[(d)]$\lim\limits_{n\to\I} \|y_n^*ay_n-a\|=0$.
\ee

Let $(\psi_n)_{n\in\N}$ be a sequence of completely positive contractive maps $\psi_n\colon A\to A^\alpha$
as in the conclusion of \autoref{thm:ApproxHom}. Easy applications of the triangle inequality yield
\be
\item[(a')]$\lim\limits_{n\to\I} \|\psi_n(x_n)^*a\psi_n(x_n)-a\|=0$;
\item[(b')]$\lim\limits_{n\to\I} \|\psi_n(x_n)^*a\psi_n(y_n)\|=0$;
\item[(c')]$\lim\limits_{n\to\I} \|\psi_n(y_n)^*a\psi_n(x_n)\|=0$;
\item[(d')]$\lim\limits_{n\to\I} \|\psi_n(y_n)^*a\psi_n(y_n)-a\|=0$.
\ee

Since $\psi_n(x_n)$ and $\psi_n(y_n)$ belong to $A^\alpha$ for all $n\in\N$, we conclude
that $a\oplus a\preceq a$ in $A^\alpha$. It now follows from
Theorem~4.16 in \cite{KirRor_absOI} (here reproduced as \autoref{thm:EquivPI}) that
$A^\alpha$ is purely infinite, as desired.
\end{proof}

\begin{cor} \label{cor:Kir}
Let $A$ be a Kirchberg algebra, let $G$ be a second-countable compact group, and let
$\alpha\colon G\to\Aut(A)$ be an action with the \Rp. Then $A^\alpha$ and $A\rtimes_\alpha G$ are Kirchberg
algebras.\end{cor}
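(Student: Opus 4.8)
The plan is simply to verify the four conditions in the definition of a Kirchberg algebra (recalled just before \autoref{lma:KirLimWkSj}) for both $A^\alpha$ and $A\rtimes_\alpha G$, drawing on the structural results already in hand. Simplicity of both algebras is \autoref{cor:Simple}, and pure infiniteness of both is \autoref{prop:PI}, so the only two properties that still need a word are separability and nuclearity.

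For separability, I would note that $A^\alpha$ is a closed subalgebra of the separable \ca\ $A$ and hence separable, while $A\rtimes_\alpha G$ is separable because $A$ is separable and $G$ is second countable, so $L^1(G,A,\alpha)$ admits a countable dense subset. For nuclearity, the relevant point is that a compact group is amenable, so $A\rtimes_\alpha G$ is nuclear whenever $A$ is; and since \autoref{prop:saturated} gives that $A^\alpha$ is Morita equivalent to $A\rtimes_\alpha G$, and nuclearity is preserved under Morita equivalence, $A^\alpha$ is nuclear as well. (Alternatively, nuclearity of $A^\alpha$ can be obtained from \autoref{thm:DimNucDR} together with the fact that a Kirchberg algebra has finite nuclear dimension, or from Rosenberg's realization of $A^\alpha$ as a hereditary subalgebra of $A\rtimes_\alpha G$ combined with the permanence of nuclearity under passage to hereditary subalgebras; the Morita-equivalence route is the most economical.) Assembling these facts shows that $A^\alpha$ and $A\rtimes_\alpha G$ are separable, nuclear, simple, and purely infinite, that is, Kirchberg algebras.

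There is no genuine obstacle here: the corollary is essentially bookkeeping on top of \autoref{cor:Simple} and \autoref{prop:PI}. The only place where a moment's care is required is confirming that nuclearity descends to the fixed point algebra, and this is settled at once by the Morita equivalence supplied by \autoref{prop:saturated}.
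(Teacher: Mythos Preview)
Your proposal is correct and matches the paper's own proof essentially verbatim: the paper also cites \autoref{cor:Simple} for simplicity and \autoref{prop:PI} for pure infiniteness, and simply declares separability and nuclearity of $A^\alpha$ and $A\rtimes_\alpha G$ to be well-known. Your additional remarks justifying separability and nuclearity (via amenability of $G$ and the Morita equivalence from \autoref{prop:saturated}) are correct and merely supply the details the paper omits.
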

\begin{proof}
It is well-known that $A^\alpha$ and $A\rtimes_\alpha G$ are nuclear and separable. Simplicity follows from
\autoref{cor:Simple}, and pure infiniteness follows from \autoref{prop:PI}.
\end{proof}

For the sake of comparison, we mention here that stable finiteness passes to fixed point algebras and crossed
products by arbitrary compact group actions, since we have $A^\alpha\subseteq A$ and $A\rtimes_\alpha G\subseteq A\otimes\K(L^2(G))$,
and stable finiteness passes to subalgebras.

The following definition is standard.

\begin{df}
Let $A$ be a \ca.
\be
\item If $A$ is unital, we say that it has \emph{real rank zero} if the set of invertible
self adjoint elements in $A$ is dense in the set of self adjoint elements. If $A$ is not
unital, we say that it has real rank zero if so does its unitization $\widetilde{A}$.
\item If $A$ is unital, we say that it has \emph{stable rank one} if the set of invertible
elements is dense in $A$. If $A$ is not unital, we say that it has stable rank one if so
does its unitization $\widetilde{A}$.\ee
\end{df}

In the following proposition, the Rokhlin property is surely stronger than necessary for the
conclusion to hold, although some condition on the action must be imposed. We do not know, for
instance, whether finite Rokhlin dimension with commuting towers preserves real rank zero and
stable rank one.

\begin{prop}\label{prop:rrz_sro}
Let $A$ be a $\sigma$-unital \ca, let $G$ be a second-countable compact group, and let
$\alpha\colon G\to\Aut(A)$ be an action with the \Rp.
\be\item If $A$ has real rank zero, then so do $A^\alpha$ and $A\rtimes_\alpha G$.
\item If $A$ has stable rank one, then so do $A^\alpha$ and $A\rtimes_\alpha G$.\ee\end{prop}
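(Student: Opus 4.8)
The plan is to deduce everything from \autoref{thm:ApproxHom}, first for the fixed point algebra and then for the crossed product. Since real rank zero and stable rank one are both preserved by Morita equivalence of $\sigma$-unital \cas\ (cf.\ \autoref{rem:CPisFixPtAlg}), \autoref{prop:saturated} reduces both statements to the corresponding assertions about $A^\alpha$. To prove these, I would first reduce to the unital case: real rank zero and stable rank one of $A$ are by definition those of $\widetilde A$, one has $\widetilde{A^\alpha}=\widetilde A^{\,\widetilde\alpha}$, and if $(\psi_n)_{n\in\N}$ is a sequence of completely positive contractive maps $\psi_n\colon A\to A^\alpha$ as in the conclusion of \autoref{thm:ApproxHom}, then their unital extensions $\widetilde\psi_n\colon\widetilde A\to\widetilde{A^\alpha}$ are still unital, completely positive and contractive, are asymptotically multiplicative, and satisfy $\widetilde\psi_n(z)\to z$ for all $z\in\widetilde{A^\alpha}$ (one does not need $\widetilde\alpha$ to have the Rokhlin property, only these maps). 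So from now on I assume $A$ is unital and the $\psi_n$ are unital, and I treat the two cases in turn.

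For stable rank one, given an invertible $x\in A^\alpha$ and $\ep>0$, use stable rank one of $A$ to choose an invertible $y\in A$ with $\|x-y\|<\ep$. Asymptotic multiplicativity together with unitality give $\|\psi_n(y)\psi_n(y^{-1})-1\|\to 0$ and $\|\psi_n(y^{-1})\psi_n(y)-1\|\to 0$, so for $n$ large $\psi_n(y)$ has both a left and a right inverse in $A^\alpha$ and is therefore invertible; moreover $\|\psi_n(y)-x\|\le\|y-x\|+\|\psi_n(x)-x\|$, which is eventually less than $\ep$ because $\psi_n(x)\to x$. Hence $A^\alpha$ has stable rank one. This part is essentially automatic: the only ingredient besides \autoref{thm:ApproxHom} is the elementary fact that an element of a unital \ca\ possessing both a left inverse and a right inverse is invertible.

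For real rank zero I would use the standard characterization (Brown--Pedersen): a unital \ca\ has real rank zero \ifo its self-adjoint elements with finite spectrum are dense in the self-adjoint part. Given $a=a^*\in A^\alpha$ and $\ep>0$, pick a self-adjoint $b=\sum_{i=1}^k\lambda_i p_i\in A$ with $\|a-b\|<\ep/3$, where $p_1,\dots,p_k$ are pairwise orthogonal projections in $A$ summing to $1$; equivalently, fix a unital $\ast$-homomorphism $\pi\colon\C^k\to A$ with $\pi(e_i)=p_i$, where $e_1,\dots,e_k$ are the standard generators of $\C^k$. Then $(\psi_n\circ\pi)_{n\in\N}$ is an asymptotically multiplicative sequence of unital completely positive maps $\C^k\to A^\alpha$, so by stability of the defining relations of $\C^k$ in the unital category --- i.e.\ weak semiprojectivity of finite-dimensional \cas; see \autoref{rem:WSPinpractice} --- for $n$ large there is a unital $\ast$-homomorphism $\rho_n\colon\C^k\to A^\alpha$ with $\|\rho_n(e_i)-\psi_n(p_i)\|$ as small as we wish. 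Setting $q_i=\rho_n(e_i)$, the element $c_n=\sum_i\lambda_i q_i\in A^\alpha$ is self-adjoint with finite spectrum, and for $n$ large $\|c_n-a\|\le\|c_n-\psi_n(b)\|+\|\psi_n(b)-\psi_n(a)\|+\|\psi_n(a)-a\|<\ep$. Thus $A^\alpha$ has real rank zero, and as explained above the statements for $A\rtimes_\alpha G$ follow from \autoref{prop:saturated}. I expect the only delicate point to be the perturbation of the approximately orthogonal family $(\psi_n(p_i))_i$ to a genuine system of orthogonal projections in $A^\alpha$ --- this is where asymptotic multiplicativity of the $\psi_n$ is genuinely used --- while the passage to unitizations is routine bookkeeping.
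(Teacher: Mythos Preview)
Your proof is correct, and the overall architecture---reduction to $A^\alpha$ via \autoref{prop:saturated} and Morita invariance, then reduction to the unital case by unitizing the approximate left inverse from \autoref{thm:ApproxHom}---matches the paper exactly. (One typo: in the stable rank one argument you write ``given an invertible $x\in A^\alpha$''; you of course mean an arbitrary $x$.)

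The one genuine difference is in the treatment of real rank zero. The paper observes that the stable-rank-one argument carries over verbatim: since the $\psi_n$ are completely positive they are $*$-preserving, so if $b\in A$ is a self-adjoint invertible approximant to $a\in (A^\alpha)_{\sa}$, then $\psi_n(b)$ is already self-adjoint, and asymptotic multiplicativity makes it invertible for large $n$ just as in your stable-rank-one paragraph. Your route through the Brown--Pedersen finite-spectrum characterization and weak semiprojectivity of $\C^k$ is perfectly valid, but it introduces an extra perturbation step that is not needed here; the paper's argument treats both cases uniformly with a single line. On the other hand, your approach has the virtue of producing an honest finite-spectrum approximant in $A^\alpha$, which could be useful if one wanted more refined information (e.g., about projections in $A^\alpha$).
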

\begin{proof} By \autoref{prop:saturated} (see also \autoref{rem:CPisFixPtAlg}),
Theorem~3.3 in~\cite{Rie_dimensionSR}, and Theorem~2.5 in
\cite{BroPed}, it is enough to prove the proposition for $A^\alpha$.
Since the proofs of both parts are similar, we only prove the first one.

Since the commutative diagram in \autoref{rem:CommDiagrApprHom} can be unitized, it is enough
to assume that $A$ is unital. (Equivalently, extend the linear maps $\psi_n\colon A\to A^\alpha$
in the conclusion of \autoref{thm:ApproxHom} to unital maps $\widetilde{\psi_n}\colon \widetilde{A}
\to \widetilde{A}^\alpha$.)

Let $a$ be a self-adjoint element in $A^\alpha$ and let $\ep>0$. Since $A$ has real rank zero,
there exists an invertible self-adjoint element $b$ in $A$ such that $\|b-a\|<\frac{\ep}{2}$. Let
$(\psi_n)_{n\in\N}$ be a sequence of unital completely positive maps $A\to A^\alpha$ as in
the conclusion of \autoref{thm:ApproxHom}. Then $\psi_n(b)$ is self-adjoint for all $n\in\N$.
Moreover,
\begin{align*} \lim_{n\to\I}\left\|\psi_n(b)\psi_n(b^{-1})-1\right\|&=
\lim_{n\to\I}\left\|\psi_n(b^{-1})\psi_n(b)-1\right\|=0\ \mbox{ and}\\
\lim_{n\to\I}\left\|\psi_n(a)-a\right\|&=0.\end{align*}
Choose $n$ large enough so that
\[\left\|\psi_n(b)\psi_n(b^{-1})-1\right\|<1 \mbox{ and } \left\|\psi_n(b^{-1})\psi_n(b)-1\right\|<1,\]
and also so that $\left\|\psi_n(a)-a\right\|<\frac{\ep}{2}$.
Then $\psi_n(b)\psi_n(b^{-1})$ and $\psi_n(b^{-1})\psi_n(b)$
are invertible, and hence so is $\psi_n(b)$. Finally,
\[\|a-\psi_n(b)\|\leq \|a-\psi_n(a)\|+\|\psi_n(a)-\psi_n(b)\|<\frac{\ep}{2}+\frac{\ep}{2}=\ep,\]
which shows that $A^\alpha$ has real rank zero.
\end{proof}

We now turn to traces. For a trace $\tau$ on a \ca\ $A$, we also denote by $\tau$ its amplification
to any matrix algebra $M_n(A)$. We denote by $T(A)$ the set of all tracial states on $A$.

The following is one of Blackadar's fundamental comparability questions:

\begin{df}\label{df:OrdPjnTraces}
Let $A$ be a simple unital \ca. We say the the \emph{order on projections (in $A$) is determined by traces},
if whenever $p$ and $q$ are projections in $M_\I(A)$ satisfying $\tau(p)\leq \tau(q)$ for all $\tau \in T(A)$,
then $p\precsim_{M-vN} q$.
\end{df}

The following extends, with a simpler proof, Proposition~4.8 in~\cite{OsaPhi_CPRP}.

\begin{prop}\label{prop:OrdPjnTraces}
Let $A$ be a simple \uca, and suppose that the order on its projections is determined by traces.
Let $G$ be a second-countable compact group, and let $\alpha\colon G\to\Aut(A)$ be an action with the Rokhlin
property. Then the order on projections in $A^\alpha$ is determined by traces.
\end{prop}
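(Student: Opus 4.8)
The plan is to reduce the comparison question inside $A^\alpha$ to the corresponding question inside $A$ (which is answered by hypothesis), and then to transport the resulting Murray--von Neumann subequivalence back to $A^\alpha$ using the completely positive approximate left inverses $\psi_n\colon A\to A^\alpha$ supplied by \autoref{thm:ApproxHom}. Note first that $A^\alpha$ is simple and unital, by \autoref{cor:Simple} and the fact that $1\in A^\alpha$, so the assertion makes sense.

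\textbf{Step 1: reduction to $A$ (no Rokhlin property needed).} Given projections $p,q\in M_\infty(A^\alpha)$ with $\tau_0(p)\le\tau_0(q)$ for all $\tau_0\in T(A^\alpha)$, I would fix $k\in\N$ with $p,q\in M_k(A^\alpha)\subseteq M_k(A)$ and observe that for every $\tau\in T(A)$ the restriction $\tau|_{A^\alpha}$ is a tracial state on $A^\alpha$ (since $1_A\in A^\alpha$), and the amplification of $\tau$ to $M_k(A)$ restricts on $M_k(A^\alpha)$ to the amplification of $\tau|_{A^\alpha}$. Hence $\tau(p)=\tau|_{A^\alpha}(p)\le\tau|_{A^\alpha}(q)=\tau(q)$ for every $\tau\in T(A)$ (vacuously if $T(A)=\varnothing$). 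Since the order on projections of $A$ is determined by traces, this produces a partial isometry $v\in M_k(A)$ with $v^{*}v=p$ and $vv^{*}\le q$.

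\textbf{Step 2: transport back to $A^\alpha$ (this is where the Rokhlin property is used).} For a small $\ep>0$ I would apply \autoref{thm:ApproxHom} with $F_1$ the matrix entries of $v,v^{*},vv^{*},q,vv^{*}q$ and $F_2$ those of $p,q$, obtaining a unital completely positive $\psi\colon A\to A^\alpha$ that is $\ep$-multiplicative on these elements and with $\|\psi(a)-a\|<\ep$ on $F_2$; amplifying gives $\Psi=\psi\otimes\id_{M_k}\colon M_k(A)\to M_k(A^\alpha)$ with the analogous properties (constants depending only on $k$). Setting $w=\Psi(v)$, approximate multiplicativity together with $\Psi(v^{*}v)=\Psi(p)\approx p$ yields $\|w^{*}w-p\|$ small, while $vv^{*}=q(vv^{*})q$ together with $\Psi(q)\approx q$ yields $\|ww^{*}-q\,ww^{*}\,q\|$ small. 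Then the standard corrections apply: replacing $w$ by $wp$ (an $O(\sqrt\ep)$ perturbation) one may assume $w=wp$, so $w^{*}w=pw^{*}wp$ lies within $O(\ep)$ of the unit $p$ of the corner $pM_k(A^\alpha)p$, is invertible there, and $v':=w(w^{*}w)^{-1/2}$ is a partial isometry with $v'^{*}v'=p$ and $\|v'-\Psi(v)\|\to 0$ as $\ep\to 0$; consequently $e:=v'v'^{*}$ is a projection in $M_k(A^\alpha)$ with $\|e-q\,e\,q\|<\delta$, where $\delta\to0$ as $\ep\to0$. For $\ep$ small enough $\delta<\tfrac12$, so $\spec(qeq)\subseteq[0,\delta]\cup[1-\delta,1]$; the spectral projection $f:=\chi_{(1/2,1]}(qeq)$ then satisfies $f\le q$ and $\|e-f\|\le 2\delta<1$, forcing $e$ and $f$ to be Murray--von Neumann equivalent in $M_k(A^\alpha)$. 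Chaining, $p=v'^{*}v'\sim e\sim f\le q$ in $M_k(A^\alpha)$, i.e. $p\precsim_{M-vN} q$, which is the desired conclusion. (Alternatively, Lemma~2.5.3 in \cite{Lin_Book} can replace this last computation, much as in the proof of \autoref{thm:TAF}.)

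\textbf{Main obstacle.} Everything is immediate once \autoref{thm:ApproxHom} is available; the only point requiring care is Step 2, precisely because each $\psi_n$ is merely completely positive (so $\Psi(v)$ is only an approximate partial isometry) and because $vv^{*}$ need not lie in $M_k(A^\alpha)$ (so the comparison must be made against a spectral projection of $q\,ww^{*}\,q$ rather than $ww^{*}$ itself). This is exactly why the argument is shorter than the one for Proposition~4.8 in \cite{OsaPhi_CPRP}.
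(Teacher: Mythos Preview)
Your proof is correct and follows essentially the same logical route as the paper's: both arguments hinge on the two facts that (i) every tracial state on $A$ restricts to a tracial state on $A^\alpha$, and (ii) for a Rokhlin action, Murray--von~Neumann subequivalence of projections in $A^\alpha$ can be tested in $A$. The difference is organizational. The paper argues by contrapositive and simply \emph{cites} fact~(ii) as part~(1) of Proposition~3.2 in~\cite{Gar_CptRok}, yielding a two-line proof. You instead argue directly and supply an explicit proof of fact~(ii) in your Step~2, using the approximate left inverses $\psi_n$ of \autoref{thm:ApproxHom} to perturb the partial isometry $v\in M_k(A)$ to one in $M_k(A^\alpha)$. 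Your Step~2 is thus a self-contained reproof of the cited external result by the very methods of this paper; the polar-decomposition and spectral-cutoff corrections you perform are the standard ones and are carried out correctly (and, as you note, could be replaced by an appeal to Lemma~2.5.3 in~\cite{Lin_Book}). The paper's matrix-amplification reduction (passing to $\alpha\otimes\id_{M_n}$ so as to work with projections in the algebra rather than in $M_\infty$) is implicit in your use of $\Psi=\psi\otimes\id_{M_k}$.
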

\begin{proof}
Since $\alpha\otimes \id_{M_n}\colon G\to\Aut(A\otimes M_n)$ has the Rokhlin property and
$(A\otimes M_n)^{\alpha\otimes \id_{M_n}}=A^\alpha\otimes M_n$, in \autoref{df:OrdPjnTraces} it is
enough to consider projections in the algebra.

Let $p$ and $q$ be projections in $A^\alpha$, and suppose that it is not the case that $p\precsim_{M-vN} q$
in $A^\alpha$.
We want to show that there exists a tracial state $\tau$ on $A^\alpha$ such that $\tau(p)\geq \tau(q)$. By
part~(1) in Proposition~3.2 in~\cite{Gar_CptRok}, it is not the case that $p\precsim_{M-vN} q$
in $A$, so there exists a tracial state $\omega$ on $A$ such that $\omega(p)\geq \omega(q)$.
Now take $\tau=\omega|_{A^\alpha}$.
\end{proof}

Finally, we close this section by exploring the extent to which semiprojectivity
passes from $A$ to the fixed point algebra and the crossed product by a
compact group with the Rokhlin property. Even though we have not been able to answer this
question for semiprojectivity, we can provide a satisfying answer for \emph{weak} semiprojectivity
(see \autoref{df:wsp}).

In order to show this, we introduce the following technical definition, which is inspired
in the notion of ``corona extendibility" (Definition~1.1 in~\cite{LorPed_CoronaExt}; we are thankful
to Hannes Thiel for providing this reference).

\begin{df}\label{df:SeqAlgExt}
Let $\theta\colon A\to B$ be a homomorphism between \ca s $A$ and $B$. We say that $\theta$ is
\emph{sequence algebra extendible}, if whenever $E$ is a \ca\ and $\varphi\colon A\to E_\I$ is a
homomorphism, there exists a homomorphism $\rho\colon B\to E_\I$ such that
$\varphi=\rho\circ\theta$.
\end{df}

In analogy with Lemma~1.4 in~\cite{LorPed_CoronaExt}, we have the following:

\begin{lma}\label{lma:SeqAlgExtWkSj}
Let $\theta\colon A\to B$ be a sequence algebra extendible homomorphism between \ca s $A$ and $B$.
If $B$ is weakly semiprojective, then so is $A$.
\end{lma}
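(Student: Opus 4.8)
The plan is a two-step diagram chase that simply composes the two universal properties at hand. Recall from \autoref{df:wsp} that to show $A$ is weakly semiprojective we must, for an arbitrary \ca\ $E$ and an arbitrary homomorphism $\psi\colon A\to E_\I$, produce a homomorphism $A\to\ell^\I(\N,E)$ whose composition with the quotient map $\eta_E$ equals $\psi$. So I would fix such an $E$ and such a $\psi$ at the outset.

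First I would feed $\psi\colon A\to E_\I$ into the hypothesis that $\theta\colon A\to B$ is sequence algebra extendible (\autoref{df:SeqAlgExt}); this yields a homomorphism $\rho\colon B\to E_\I$ with $\psi=\rho\circ\theta$. Next, since $B$ is weakly semiprojective, the homomorphism $\rho\colon B\to E_\I$ admits a lift along $\eta_E$, i.e.\ a homomorphism $\widetilde\rho\colon B\to\ell^\I(\N,E)$ with $\eta_E\circ\widetilde\rho=\rho$. Setting $\varphi=\widetilde\rho\circ\theta\colon A\to\ell^\I(\N,E)$, we obtain a homomorphism with
\[\eta_E\circ\varphi=\eta_E\circ\widetilde\rho\circ\theta=\rho\circ\theta=\psi,\]
which is exactly the required lift. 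Since $E$ and $\psi$ were arbitrary, $A$ is weakly semiprojective.

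There is no real obstacle here: the statement is a formal consequence of composing the factorization provided by sequence algebra extendibility with the lifting provided by weak semiprojectivity of $B$, and it is the evident analog of Lemma~1.4 in~\cite{LorPed_CoronaExt}. The only point requiring a little care is the unital category: if one wants the conclusion for weak semiprojectivity in the unital category, one should assume in addition that $\theta$ is unital, in which case $\rho$ is automatically unital whenever $\psi$ is (since $\rho(1_B)=\rho(\theta(1_A))=\psi(1_A)=1$), and then $\widetilde\rho$, and hence $\varphi=\widetilde\rho\circ\theta$, can be taken unital as well. I would state the non-unital version as the lemma and remark on this unital refinement in passing.
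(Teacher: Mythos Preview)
Your proof is correct and is precisely the straightforward diagram chase the paper has in mind; the paper's own proof reads only ``This is straightforward.'' Your write-up simply makes explicit the composition of the sequence-algebra-extendibility factorization with the weak-semiprojectivity lift, which is exactly the intended argument.
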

\begin{proof}
This is straightforward.
\end{proof}

The following lemma will allow us to replace maps from separable $C^*$-algebras into $(E_\I)_\I$ with maps into $E_\I$.
Its proof boils down to a more or less standard reindexation argument.

\begin{lma}\label{lma:Einftyinfty}
Let $A$ and $B$ be separable \ca s, let $E$ be a \ca. Denote by $j\colon E_\I\to (E_\I)_\I$ the canonical
embedding as constant sequences. Suppose that we are given homomorphisms $\theta\colon A\to B$, $\varphi\colon A\to E_\I$ and
$\psi\colon B\to (E_\I)_\I$ making the following diagram commute:
\beqa
\xymatrix{
A\ar[r]^{\theta}\ar[d]_{\varphi}& B\ar[d]^-{\psi}\ar@{-->}[dl]^-{\rho}\\
E_\I \ar[r]_-{j} & (E_\I)_\I
.}
\eeqa

Then there exists a homomorphism $\rho\colon B\to E_\I$ such that $\rho\circ\theta=\varphi$.
\end{lma}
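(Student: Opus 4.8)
The plan is to extract the essential content as a reindexation statement about separable subalgebras of the sequence algebra $(E_\I)_\I$, and then obtain the lemma by bookkeeping. The statement I would prove is: \emph{for every separable $C^*$-subalgebra $C\subseteq(E_\I)_\I$ there is a $*$-homomorphism $\Phi\colon C\to E_\I$ with $\Phi(j(y))=y$ for every $y\in E_\I$ with $j(y)\in C$.} Granting this, I apply it to $C=\psi(B)$, which is separable since $B$ is. Because $j\circ\varphi=\psi\circ\theta$, for each $a\in A$ the element $j(\varphi(a))=\psi(\theta(a))$ lies in $C$, so $\Phi(\psi(\theta(a)))=\Phi(j(\varphi(a)))=\varphi(a)$; hence $\rho:=\Phi\circ\psi\colon B\to E_\I$, a composite of $*$-homomorphisms, satisfies $\rho\circ\theta=\varphi$, as required.

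To set up the reindexation, I would work with double representatives: each $c\in(E_\I)_\I$ is represented by a bounded sequence $(c_m)_m$ in $E_\I$, and each $c_m$ by a bounded sequence $(c_{m,n})_n$ in $E$; two such double sequences represent the same element of $(E_\I)_\I$ precisely when $\lim_m\limsup_n\|c_{m,n}-c'_{m,n}\|=0$, and $j(\eta_E((y_n)_n))$ is represented by the double sequence $(y_n)_{m,n}$ which is constant in $m$. I fix a countable dense $\Q[i]$-$*$-subalgebra $C_0=\{c_k:k\in\N\}$ of $C$ which contains $j(y)$ for each $y$ in a fixed countable dense subset of the closed subalgebra $j^{-1}(C)\subseteq E_\I$, and for each $k$ a double representative $(c_k^{m,n})_{m,n}$ of $c_k$, renormalised (harmlessly) so that $\sup_{m,n}\|c_k^{m,n}\|\le 2\|c_k\|$ and chosen so that $c_k^{m,n}=y_n$, independently of $m$, whenever $c_k=j(\eta_E((y_n)_n))$ for one of the selected $y$'s. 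Since the $\Q[i]$-$*$-algebra relations among the $c_k$ hold on the nose in $(E_\I)_\I$, the corresponding defect functions — such as $(m,n)\mapsto\|c_k^{m,n}c_l^{m,n}-c_{p(k,l)}^{m,n}\|$ when $c_kc_l=c_{p(k,l)}$, together with the analogues for sums, $\Q[i]$-scalar multiples and adjoints — constitute a countable family $(g_j)_{j\in\N}$ of bounded functions $\N^2\to[0,\infty)$ with $\lim_m\limsup_n g_j(m,n)=0$ for every $j$.

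The main obstacle, and the only genuinely delicate point, is to choose a reindexing function $m\colon\N\to\N$ with $m(n)\to\infty$ such that simultaneously $\lim_n g_j(m(n),n)=0$ for every $j$; a naive diagonal fails because the rate at which $g_j(p,n)$ approaches its $n$-limsup depends on $p$. The standard fix, which I would use, is: choose $\mu(1)<\mu(2)<\cdots$ with $\limsup_n g_j(\mu(p),n)<1/p$ for all $j\le p$ (possible since only finitely many $j$ are involved), then choose $N_1<N_2<\cdots$ so that in fact $g_j(\mu(p),n)<1/p$ for all $n\ge N_p$ and all $j\le p$, and set
\[m(n)=\mu(p)\ \text{ for }\ n\in[N_p,N_{p+1}),\qquad m(n)=\mu(1)\ \text{ for }\ n<N_1.\]
Then $m(n)\to\infty$, and for fixed $j$ and $n\ge N_j$ we have $m(n)=\mu(p)$ with $p\ge j$, whence $g_j(m(n),n)<1/p\to0$.

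It then remains to assemble $\Phi$. Define $\Phi_n\colon C_0\to E$ by $\Phi_n(c_k)=c_k^{m(n),n}$, extended $\Q[i]$-linearly; these are bounded (by $2$) and, by the previous step, asymptotically multiplicative, additive, $\Q[i]$-homogeneous and $*$-preserving on $C_0$. Hence $c\mapsto\eta_E((\Phi_n(c))_n)$ is a bounded $\Q[i]$-$*$-homomorphism $C_0\to E_\I$, which extends by continuity to a $*$-homomorphism $\Phi\colon C\to E_\I$. Finally, if $c_k=j(y)$ with $y=\eta_E((y_n)_n)$ among the selected elements, then $\Phi(j(y))=\eta_E((c_k^{m(n),n})_n)=\eta_E((y_n)_n)=y$ because $c_k^{m,n}=y_n$ does not depend on $m$; thus $\Phi\circ j$ is the identity on a dense subset of $j^{-1}(C)$, hence on all of $j^{-1}(C)$. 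This proves the reindexation statement, and with it the lemma. I expect the bookkeeping around choosing and normalising representatives to be routine; the reindexing construction in the third paragraph is where the actual content sits.
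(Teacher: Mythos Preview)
Your proof is correct and follows the same reindexation strategy as the paper, but it is organised differently and, in one respect, more carefully. You abstract the content to a statement about separable subalgebras of $(E_\I)_\I$ admitting a $*$-homomorphic retraction onto $E_\I$, then specialise to $C=\psi(B)$; the paper instead works directly with lifts $\psi^{(n)}_m$ of $\psi$ and $\varphi_k$ of $\varphi$ and diagonalises over increasing finite subsets $F_r\subseteq A$ and $G_r\subseteq B$. The substantive difference is in the diagonal: you vary the outer index as a function $m(n)$ of the inner index while keeping $n$ running over all of $\N$, so that for $c_k=j(y)$ with representative $(y_n)_n$ you recover $\Phi(j(y))=\eta_E((y_n)_n)=y$ on the nose. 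The paper instead selects a subsequence $(n_r,m_r)_r$ in both indices and sets $\rho(b)=\eta_E((\psi^{(n_r)}_{m_r}(b))_r)$; as literally written this only yields $\rho(\theta(a))=\eta_E((\varphi_{m_r}(a))_r)$, which is the image of $\varphi(a)$ under a subsequence map rather than $\varphi(a)$ itself. Your device of letting the outer index depend on the inner one is exactly what is needed to make the identity $\rho\circ\theta=\varphi$ hold, and is the cleaner way to execute this standard argument. The remaining bookkeeping (normalising representatives, checking that the bounded $\Q[i]$-$*$-homomorphism on $C_0$ extends contractively to $C$) is routine.
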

\begin{proof}
Let $(\psi^{(n)})_{n\in\N}$ be a sequence of linear maps $\psi^{(n)}\colon B\to E_\I$ lifting $\psi$. For $n\in\N$,
let $(\psi^{(n)}_m)_{m\in\N}$ be a sequence of linear maps $\psi^{(n)}_m\colon B\to E$ lifting $\psi^{(n)}$.
Let also $(\varphi_k)_{k\in\N}$ be a sequence of linear maps $\varphi_k\colon A\to E$ lifting $\varphi$. With the
natural representation of elements in $(E_\I)_\I$ by doubly indexed sequences in $E$, the identity
$\psi\circ\theta=j\circ\varphi$ can be rephrased as
\begin{align*}
\lim_{n\to\I}\limsup_{m\to\I} \left\|\psi^{(n)}_m(\theta(a))-\varphi_m(a) \right\|=0
\end{align*}
for all $a\in A$.
Let $(F_r)_{r\in\N}$ and $(G_r)_{r\in\N}$ be sequences of finite subsets of $A$ and $B$, respectively, such that
$\bigcup\limits_{r\in\N} F_r$ is dense in $A$ and $\bigcup\limits_{r\in\N} G_r$ is dense in $B$. Without loss of
generality, we may assume that $F_r^*=F_r$ and $F_k^2\subseteq F_{r+1}$ for all $r\in\N$, and similarly with the
sets $G_r$ for $r\in\N$. Likewise, we may assume that $\theta(F_r)\subseteq G_r$ for all $r\in\N$.

For each $r\in\N$, find an integer $n_r$ such that
\be
\item $\limsup\limits_{m\to\I} \left\|\psi^{(n_r)}_m(\theta(a))-\varphi_m(a) \right\|<\frac{1}{r}$ for all $a\in F_r$;
\item $\limsup\limits_{m\to\I} \left\|\psi^{(n_r)}_m(b^*c)-\psi^{(n_r)}_m(b)^*\psi^{(n_r)}_m(c) \right\|<\frac{1}{r}$ for all $b,c\in G_r$;
\item $\limsup\limits_{m\to\I} \left\|\psi^{(n_r)}_m(b)\right\|<\|b\|+\frac{1}{r}$ for all $b\in G_r$.
\ee
Without loss of generality, we may assume that $n_{r+1}>n_r$ for all $r\in\N$.
Similarly, find an
increasing sequence $(m_r)_{r\in\N}$ in $\N$ satisfying
\be
\item[(1')] $\left\|\psi^{(n_r)}_{m_r}(\theta(a))-\varphi_{m_r}(a) \right\|<\frac{1}{r}$ for all $a\in F_r$;
\item[(2')] $\left\|\psi^{(n_r)}_{m_r}(b^*c)-\psi^{(n_r)}_{m_r}(b)^*\psi^{(n_r)}_{m_r}(c) \right\|<\frac{1}{r}$ for all $b,c\in G_r$;
\item[(3')] $\left\|\psi^{(n_r)}_{m_r}(b)\right\|<\|b\|+\frac{1}{r}$ for all $b\in G_r$.
  \ee

Recall that $\eta_E\colon \ell^\I(\N,E)\to E_\I$ denotes the canonical quotient map.
Define $\rho\colon B \to \ell^\I(\N,E)$ by $\rho(b)=\eta_E\left(\psi^{(n_r)}_{m_r}(b)\right)_{r\in\N}$ for $b\in \N$. (One first
defines $\rho$ on the union of the $G_r$, and since it is multiplicative and contractive by construction, it extends to a homomorphism
from all of $B$.) Since the identity $\rho\circ \theta=\varphi$ holds on a dense subspace of $A$, it holds on all of $A$. This finishes
the proof.
\end{proof}

In the next proposition, we show that weak semiprojectivity passes to fixed point algebras of actions
with the Rokhlin property (and to crossed products, whenever the group is finite). Our conclusions
seem not to be obtainable with the methods developed in \cite{OsaPhi_CPRP}, since it is not in general
true that a corner of a weakly semiprojective \ca\ is weakly semiprojective.

\begin{prop}\label{prop:WksjPreserved}
Let $G$ be a second-countable compact group, let $A$ be a separable \ca, and let $\alpha\colon G\to\Aut(A)$
be an action with the Rokhlin property. Then the canonical inclusion $\iota\colon A^\alpha\to A$
is sequence algebra extendible (\autoref{df:SeqAlgExt}).

In particular, if $A$ is weakly semiprojective,
then so is $A^\alpha$ by \autoref{lma:SeqAlgExtWkSj}. If in addition $G$ is finite, then $A\rtimes_\alpha G$ is also weakly
semiprojective.\end{prop}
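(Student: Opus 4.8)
The plan is to establish sequence algebra extendibility of $\iota\colon A^\alpha\to A$ directly, by combining the averaging map from \autoref{thm:ApproxHom} with the reindexation \autoref{lma:Einftyinfty}; the two ``in particular'' assertions are then essentially formal.

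First I would invoke the averaging process: by \autoref{thm:ApproxHom}, and more precisely by the commutative diagram recorded in \autoref{rem:CommDiagrApprHom} (which exists without any nuclearity hypothesis), the separability of $A$ yields a $\ast$-homomorphism $\psi\colon A\to (A^\alpha)_\I$ such that $\psi\circ\iota$ equals the canonical embedding of $A^\alpha$ into $(A^\alpha)_\I$ as constant sequences; concretely, $\psi$ is assembled from the completely positive contractive maps $\psi_n\colon A\to A^\alpha$ of \autoref{thm:ApproxHom}, which (after the usual diagonalization, since $A$ is separable) are asymptotically multiplicative on all of $A$ and satisfy $\lim_{n\to\I}\|\psi_n(a)-a\|=0$ for $a\in A^\alpha$.

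Now let $E$ be an arbitrary \ca\ and $\varphi\colon A^\alpha\to E_\I$ a homomorphism, as in \autoref{df:SeqAlgExt}. Applying $\varphi$ coordinatewise defines a homomorphism $\varphi_\I\colon (A^\alpha)_\I\to (E_\I)_\I$ (well defined since $\varphi$ is contractive), and I put $\psi'=\varphi_\I\circ\psi\colon A\to (E_\I)_\I$. Let $j\colon E_\I\to (E_\I)_\I$ denote the canonical constant embedding. Because $\psi\circ\iota$ is the constant embedding of $A^\alpha$ into $(A^\alpha)_\I$, one gets $\psi'\circ\iota=j\circ\varphi$. Since $A^\alpha$ and $A$ are separable, \autoref{lma:Einftyinfty}, applied with $\theta=\iota$ and with the maps $\varphi$ and $\psi'$, produces a homomorphism $\rho\colon A\to E_\I$ with $\rho\circ\iota=\varphi$. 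As $E$ and $\varphi$ were arbitrary, $\iota$ is sequence algebra extendible.

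The remaining claims follow at once. If $A$ is weakly semiprojective, then $A^\alpha$ is weakly semiprojective by \autoref{lma:SeqAlgExtWkSj}. If in addition $G$ is finite, then by \autoref{rem:CPisFixPtAlg} the crossed product $A\rtimes_\alpha G$ is isomorphic to $(A\otimes\K(L^2(G)))^{\alpha\otimes\Ad(\lambda)}$, and $\alpha\otimes\Ad(\lambda)$ again has the Rokhlin property; since $G$ is finite, $A\otimes\K(L^2(G))\cong M_{|G|}(A)$, which is weakly semiprojective whenever $A$ is (weak semiprojectivity passes to matrix amplifications; see \cite{Lor_Book}). Applying the part of the proposition just proved to the action $\alpha\otimes\Ad(\lambda)$ on $M_{|G|}(A)$, we conclude that its fixed point algebra $A\rtimes_\alpha G$ is weakly semiprojective. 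The only step needing genuine care is the very first one, namely arranging the maps of \autoref{thm:ApproxHom} to be asymptotically multiplicative on all of $A$ so that $\psi$ is an honest homomorphism, and then bookkeeping the two nested sequence algebras $(A^\alpha)_\I$ and $(E_\I)_\I$ so that \autoref{lma:Einftyinfty} applies literally; once \autoref{thm:ApproxHom} and \autoref{lma:Einftyinfty} are in hand, the rest of the argument is purely diagrammatic.
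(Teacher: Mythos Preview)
Your argument for sequence algebra extendibility is correct and matches the paper's proof essentially line for line: build $\psi\colon A\to (A^\alpha)_\I$ from \autoref{thm:ApproxHom}, compose with $\varphi_\I$, and invoke \autoref{lma:Einftyinfty}. The only difference is in the finite-group clause: the paper uses the direct identification $A\rtimes_\alpha G\cong M_{|G|}\otimes A^\alpha$ (valid for Rokhlin actions of finite groups) and concludes immediately from weak semiprojectivity of $A^\alpha$, whereas you instead realize the crossed product as the fixed point algebra of $\alpha\otimes\Ad(\lambda)$ on $M_{|G|}(A)$ and reapply the proposition. Both routes are valid; yours is slightly more self-contained within the machinery already developed in the paper, while the paper's is a shade more direct once one grants the isomorphism $A\rtimes_\alpha G\cong M_{|G|}(A^\alpha)$.
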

\begin{proof}
Use \autoref{thm:ApproxHom} to choose a sequence $(\psi_n)_{n\in\N}$ of asymptotically $\ast$-mul\-ti\-pli\-ca\-tive
linear maps $\psi_n\colon A\to A^\alpha$ such that
$\lim\limits_{n\to\I}\|\psi_n(a)-a\|=0$ for all $a\in A^\alpha$. Regard
$(\psi_n)_{n\in\N}$ as a homomorphism $\psi\colon A\to (A^\alpha)_\I$ such that the restriction $\psi|_{A^\alpha}$
agrees with the canonical inclusion $A^\alpha\hookrightarrow (A^\alpha)_\I$.

Let $E$ be a \ca\ and let $\varphi\colon A^\alpha\to E_\I$ be a homomorphism. Denote by
\[\varphi_\I\colon (A^\alpha)_\I\to (E_\I)_\I\]
the homomorphism induced by $\varphi$. There is a commutative diagram
\beqa\xymatrix{A^\alpha \ar[r]^-{\iota} \ar[d]_-{\varphi}& A\ar@{-->}[dl]^-{\rho} \ar[r]^-{\psi} &(A^\alpha)_\I \ar[dl]^-{\varphi_\I}\\
E_\I\ar[r]_-{j} & (E_\I)_\I. &}\eeqa

By \autoref{lma:Einftyinfty}, there exists a homomorphism $\rho\colon A\to E_\I$ such that $\varphi=\rho\circ \iota$.
Thus $\iota$ is sequence algebra extendible, as desired.

If $G$ is finite, then
$A\rtimes_\alpha G$ can be canonically identified with $M_{|G|}\otimes A^\alpha$, and hence it is
also weakly semiprojective.
\end{proof}

Finally, we point out that weak semiprojectivity does not in general pass to crossed products by Rokhlin actions when
the group is compact but not finite. Indeed, $C(\T)\rtimes_{\texttt{Lt}}\T\cong \K(L^2(\T))$ is not weakly semiprojective,
while $C(\T)$ is even semiprojective.


\begin{thebibliography}{Gar15b}

\bibitem[BS15]{BarSza_preparation}
S.~Barlak and G.~Szabo, \emph{Sequentially split $\ast$-homomorphisms between $C^*$-algebras},
preprint, arXiv:1510.04555, 2015.

\bibitem[BP91]{BroPed}
L.~Brown and G.~Pedersen, \emph{{\ca{s} of real rank zero}}, J. Funct. Anal.
  \textbf{99} (1991), no.~1, 131--149.

\bibitem[Dad09]{Dad_CtsFdsFinDim}
M.~Dadarlat, \emph{{Continuous fields of \ca{s} over finite dimensional
  spaces}}, Adv. Math. \textbf{222} (2009), no.~5, 1850--1881.

\bibitem[EL99]{EilLor_StableRels}
S.~Eilers and T.~Loring, \emph{{Computing contingencies for stable relations}},
  Internat. J. Math. \textbf{10} (1999), 301--326.

\bibitem[ELP98]{EilLorPed_StabAnticomm}
S.~Eilers, T.~Loring, and G.~Pedersen, \emph{{Stability of anticommutation
  relations. An application of noncommutative CW complexes}}, J. reine angew.
  Math. \textbf{499} (1998), 101--143.

\bibitem[FM79]{FacMar1}
T.~Fack and O.~Mar\'echal, \emph{{Sur la classification des symetries des
  $C^*$-alg\`ebres UHF}}, Canad. J. Math. \textbf{31} (1979), 469--523.

\bibitem[Gar14a]{Gar_Kir1}
E.~Gardella, \emph{{Classification theorems for circle actions on Kirchberg
  algebras, I}}, preprint, arXiv:1405.2469, 2014.

\bibitem[Gar14b]{Gar_Kir2}
\bysame, \emph{{Classification theorems for circle actions on Kirchberg
  algebras, II}}, preprint, arXiv:1406.1208, 2014.

\bibitem[Gar16]{Gar_CptRok}
\bysame, \emph{{Compact group actions with the Rokhlin property.}} In
  preparation, 2016.

\bibitem[Gar15a]{Gar_RegPropCPRdim}
\bysame, \emph{{Regularity properties and Rokhlin dimension for compact group
  actions}}, Houston J. of Math., to appear. (Preprint, arXiv:1407.5485),
  2015.

\bibitem[Gar15b]{Gar_Rdim}
\bysame, \emph{{Rokhlin dimension for compact group actions}}, Indiana Univ.
Math. J., to appear. (Preprint,  arXiv:1407.1277). 2015.

\bibitem[GHS16]{GHS_preparation}
E.~Gardella, I.~Hirshberg, and L.~Santiago, \emph{{Rokhlin dimension: tracial
  properties and crossed products}}, in preparation, 2016.

\bibitem[GL{\etalchar{+}}94]{GooLazPel}
E.~Gootman, A.~Lazar, and C.~Peligrad, \emph{{Spectra for compact group
  actions}}, J. Operator Theory \textbf{31} (1994), 381--399.

\bibitem[HJ82]{HerJon_period}
R.~Herman and V.~Jones, \emph{Period two automorphisms of UHF $C^*$-algebras},
  J. Funct. Anal. \textbf{45} (1982), 169--176.

\bibitem[HO84]{HerOcn_StabInteg}
R.~Herman and A.~Ocneanu, \emph{{Stability for Integer Actions on UHF
  $C^*$-algebras}}, J. Funct. Anal. \textbf{59} (1984), 132--144.

\bibitem[HRW07]{HRW_CXalg}
I.~Hirshberg, M.~R\o rdam, and W.~Winter, \emph{$C_0(X)$-algebras, stability and
strongly self-absorbing $C^*$-algebras}. Math. Ann. \textbf{339} (2007), 695--732.

\bibitem[HW07]{HirWin_Rp}
I.~Hirshberg and W.~Winter, \emph{{Rokhlin actions and self-absorbing \ca s}},
  Pacific J. of Math. \textbf{233} (2007), 125--143.

\bibitem[Izu04a]{Izu_RpI}
M.~Izumi, \emph{{Finite group actions on \ca{s} with the Rohlin property. I}},
  Duke Math. J. \textbf{122} (2004), no.~2, 233--280.

\bibitem[Izu04b]{Izu_RpII}
\bysame, \emph{{Finite group actions on \ca{s} with the Rohlin property. II}},
  Adv. Math. \textbf{184} (2004), no.~1, 119--160.

\bibitem[Kir06]{Kir_CentralSeq}
E.~Kirchberg, \emph{Central sequences in $C\sp *$-algebras and strongly purely
  infinite algebras}, Operator Algebras: The Abel Symposium 2004, 175--231,
  Abel Symp., 1, Springer, Berlin, 2006.

\bibitem[Kis96]{Kis_flows}
A.~Kishimoto, \emph{{A Rohlin property for one-parameter automorphism groups}},
  Comm. Math. Phys. \textbf{335} (1996), 199--236.

\bibitem[Kis80]{Kis_simpleCP}
\bysame, \emph{{Simple crossed products of $C^*$-algebras by locally
  compact abelian groups}}, Yokohama Math. J. \textbf{28} (1980), 69--85.

\bibitem[Kis95]{Kis_AutUHF}
\bysame, \emph{{The Rohlin property for automorphisms of UHF algebras}}, J.
  Reine Angew. Math. \textbf{465} (1995), 183--196.

\bibitem[KR02]{KirRor_absOI}
E.~Kirchberg and M.~R{\o}rdam, \emph{Infinite non-simple \ca{s}: absorbing the
  {C}untz algebra $\mathcal{O}_\infty$}, {Adv. Math.} \textbf{167} (2002),
  no.~2, 195--264.

\bibitem[KW04]{KirWin_covdim}
E.~Kirchberg and W.~Winter, \emph{{Covering dimension and quasidiagonality}},
  Int. J. Math. \textbf{15} (2004), 63--85.

\bibitem[Lin01a]{Lin_Book}
H.~Lin, \emph{{An Introduction to the Classification of Amenable C*-Algebras}},
  {World Scientific Pub. Co. Inc.}, 2001.

\bibitem[Lin11]{Lin_Asymptotic}
\bysame, \emph{{Asymptotic unitary equivalence and classification of simple
amenable C*-algebras}}, Invent. Math. \textbf{183} (2011), 385--450.

\bibitem[Lin05]{Lin_ClassifTAF}
\bysame, \emph{{Classification of simple C*-algebras with tracial topological
  rank zero}}, Duke Math. J. \textbf{125} (2005), 91--119.

\bibitem[Lin07a]{Lin_TAI}
\bysame, \emph{{Simple nuclear C*-algebras of tracial topological rank one}},
  J. Funct. Anal. \textbf{251} (2007), 601--679.

\bibitem[Lin01b]{Lin_TAF}
\bysame, \emph{{Tracially AF C*-algebras}}, Trans. Amer. Math. Soc. \textbf{353}
  (2001), 693--722.

\bibitem[Lin07b]{Lin_WSjPI}
\bysame, \emph{{Weak semiprojectivity in purely infinite C*-algebras}}, Canad.
  J. Math. \textbf{59} (2007), 343--371.

\bibitem[Lor97]{Lor_Book}
T.~Loring, \emph{{Lifting solutions to perturbing problems in \ca{s}}}, {Fields
  Institute Monographs 8. Providence, R.I.: American Mathematical Society. IX},
  1997.

\bibitem[LP97]{LorPed_CoronaExt}
T.~Loring and G.~Pedersen, \emph{Corona extendibility and asymptotic
  multiplicativity}, K-theory \textbf{11} (1997), 83--102.

\bibitem[Naw12]{Naw_RpNonunital}
N.~Nawata, \emph{{Finite group actions on certain stably projectionless \ca s
  with the Rokhlin property}}, Trans. Amer. Math. Soc. \textbf{368} (2016), no.~1, 471--493.

\bibitem[OP12]{OsaPhi_CPRP}
H.~Osaka and N.~C. Phillips, \emph{Crossed products by finite group actions
  with the Rokhlin property}, {Math. Z.} \textbf{270} (2012), 19--42.

\bibitem[PP14]{PasPhi_PermProp}
C.~Pasnicu and N.~C. Phillips, \emph{Permanence properties for crossed products
  and fixed point algebras of finite groups}, Trans. Amer. Math. Soc.
  \textbf{366} (2014), 4625--4648.

\bibitem[Phi00]{Phi_Classif}
N.~C. Phillips, \emph{{A Classification Theorem for Nuclear Purely Infinite Simple
  $C^*$-algebras}}, Doc. Math. \textbf{5} (2000), 49--114.

\bibitem[Phi87]{Phi_Book}
\bysame, \emph{{Equivariant K-theory and freeness of group actions on
  \ca{s}}}, {Lecture Notes in Mathematics, 1274. Berlin etc.: Springer-Verlag.
  VIII}, 1987.

\bibitem[Phi11]{Phi_tracialFirst}
\bysame, \emph{The tracial {R}okhlin property for actions of finite groups on
  {$C^\ast$}-algebras}, Amer. J. Math. \textbf{133} (2011), no.~3, 581--636.

\bibitem[Rie83]{Rie_dimensionSR}
M.~Rieffel, \emph{{Dimension and stable rank in the K-theory of \ca{s}}}, Proc.
  Lond. Math. Soc., III. Ser. \textbf{46} (1983), 301--333.

\bibitem[Rob12]{Rob_classif}
L.~Robert, \emph{{Classification of inductive limits of 1-dimensional NCCW
  complexes}}, Adv. Math. \textbf{231} (2012), no.~5, 2802--2836.

\bibitem[Ros79]{Ros_corner}
J.~Rosenberg, \emph{{Appendix to O. Bratteli's paper on ``Crossed products of
  UHF algebras''}}, Duke Math. J. \textbf{46} (1979), no.~10, 25--26.

\bibitem[San15]{San_RpNonunital}
L.~Santiago, \emph{{Crossed products by actions of finite groups with the
  Rokhlin property}}, Internat. J. Math. \textbf{26} (2015), no.~7, 1550042, 31pp.

\bibitem[Spi07]{Spi_WSjPI}
J.~Spielberg, \emph{{Weak semiprojectivity for purely infinite \ca{s}}}, Canad.
  Math. Bull. \textbf{50} (2007), no.~3, 460--468.

\bibitem[Sza15]{Sza_cRp}
G.~Szabo, \emph{A short note on the continuous Rokhlin property and the Universal
Coefficient Theorem in $E$-theory}, Canad. Math. Bull. \textbf{58} (2015), no.~2, 374--380.

\bibitem[Thi11]{Thi_IndLimProj}
H.~Thiel, \emph{{Inductive limits of projective \ca{s}}}, preprint,
  arXiv:1105.1979, 2011.

\bibitem[TW07]{TomWin_SSA}
A.~Toms and W.~Winter, \emph{{Strongly self-absorbing \ca{s}}}, Trans. Am.
  Math. Soc. \textbf{359} (2007), no.~8, 3999--4029.

\bibitem[WZ10]{WinZac}
W.~Winter and J.~Zacharias, \emph{The nuclear dimension of $C^*$-algebras},
Adv. Math. \textbf{224} (2010), no.~2, 461--498.

\end{thebibliography}

\newcommand{\etalchar}[1]{$^{#1}$}
\providecommand{\bysame}{\leavevmode\hbox to3em{\hrulefill}\thinspace}
\providecommand{\MR}{\relax\ifhmode\unskip\space\fi MR }
\providecommand{\MRhref}[2]{%
  \href{http://www.ams.org/mathscinet-getitem?mr=#1}{#2}
}
\providecommand{\href}[2]{#2}

\end{document}